\newtheorem{theorem}{Theorem}[section]
\newtheorem{proposition}{Proposition}[section]
\newtheorem{lemma}[theorem]{Lemma}
\newtheorem{definition}[theorem]{Definition}
\def\C{{\mathbb C}}  
\def\S{{\bf S}} 
\def\E{{\textbf{E}}}
\def\Z{{\mathbb Z}} 
\def\R{{\mathbb R}}
\def\P{{\bf P}}
\def\Q{{\bf Q}}
\def\I{{\bf I}}
\def\i{{\imath}}
\def\T{{\bf T}}
\def\Z{{\mathbb Z}}
\def\I{{\bf I}}
\def\J{{\bf J}}
\def\<{\left<}
\def\>{\right>}
\def\endproof{$\Box$}
\def\dist{{\rm dist}}
\newcommand {\rea}{\mathbb{R}}
\newcommand {\ma}{\mathcal{M}}
\newcommand {\BBZ}{\mathbb{Z}}
\def\i{{\bf i}}
\def\e{{\bf e}}
\newcommand {\size}{\mathrm{size}}
\newcommand {\sgn}{\mathop{\mathrm{sgn}}}
\newcommand {\calG}{\mathcal{G}}
\newcommand {\dis}{\mathop{\mathrm{dist}}}
\newcommand {\calJ}{\mathcal{J}}
\author{Yen Do \qquad Richard Oberlin \qquad Eyvindur A. Palsson}
\thanks{Y.D. is supported in part by NSF grants DMS--1201456 and DMS--1521293.}
\thanks{R.O. is supported in part by NSF Grant DMS-1068523.}
\title[Estimates for ergodic bilinear  averages]{Variation-norm and fluctuation estimates for  ergodic bilinear averages}
\date{\today}
\address{Department of Mathematics,
The University of Virginia, Charlottesville, VA 22904-4137, USA}
\email{yendo@virginia.edu}
\address{Department of Mathematics,
Florida State University,
Tallahassee, FL 32306-4510, USA}
\email{roberlin@math.fsu.edu}
\address{Department of Mathematics and Statistics,
    Williams College,  Williamstown, MA 01267,   USA}
\email{ eap2@williams.edu}
\begin{document}

\begin{abstract}{For any dynamical system, we show that  higher variation-norms for the sequence of  ergodic bilinear averages of two functions satisfy a large range of bilinear $L^p$ estimates. It follows  that, with probability one, the number of fluctuations along   this sequence  may grow at most polynomially with respect to (the growth of) the underlying scale.    These results strengthen previous works of Lacey and Bourgain where almost surely convergence of the sequence was proved (which is equivalent to the qualitative statement that the number of fluctuations is finite at each scale).  Via  transference, the proof reduces to  establishing new bilinear $L^p$ bounds for  variation-norms of truncated bilinear operators on $\R$, and the main new ingredient of the proof of these bounds is a  variation-norm extension of  maximal Bessel inequalities of Lacey and  Demeter--Tao--Thiele.}
\end{abstract}

\maketitle
 
\section{Introduction}
Let $T$ be an invertible bi-measurable measure-preserving transformation on a complete probability space $(X,\Omega, \mu)$. Given two measurable functions $f_1, f_2$ on $X$, we consider their ergodic bilinear averages, namely
\begin{eqnarray*}
M_k[f_1, f_2](x)  &=& \frac1k \sum_{n=0}^{k-1} f_1(T^n x)  f_2(T^{-n}x)   \ \ (k=1,2,\dots) \ \ .
\end{eqnarray*}
It was shown by Bourgain in \cite{bourgain1990} that if  $f_1,f_2\in L^\infty(X)$ then $(M_k[f_1,f_2](x))_{k\ge1}$  is convergent  for $\mu$-almost every $x\in X$. Thanks to a bilinear maximal function estimate of Lacey \cite{lacey2000},  Bourgain's result remains valid for $(f_1, f_2)\in L^{p_1}\times L^{p_2}$ for every $(p_1,p_2,q)$ satisfying
\begin{eqnarray}\label{e.BHTrange}
\frac{1}{q} = \frac{1}{p_1} + \frac{1}{p_2},\  \frac{2}{3} < q < \infty
, \  \ 1 < p_1,p_2 \leq \infty \ \ , 
\end{eqnarray}
and this has been regarded as a bilinear analogue of the classical Birkhoff ergodic theorem. A similar result also holds for a variant of $M_k$ (namely the ergodic bilinear Hilbert transform), see Demeter \cite{demeter2007} and Demeter--Tao--Thiele \cite{dtt2008}. 

Our aim in this paper is to further demonstrate that   the sequence $M_k[f_1,f_2](x)$, $k\ge 1$,  converges rapidly. To formulate a consequence of our estimates, we recall the notion of fluctuations of a given sequence $(a_1,a_2,\dots)$. Given a scale $\lambda>0$,  the number of fluctuations in $(a_k)$ with respect to this scale is  the largest number $\ell$ such that there exists $\ell$ disjoint intervals
\begin{eqnarray*}
[n_1, m_1), [n_2, m_2), \dots [n_\ell, m_\ell)
\end{eqnarray*}
with the following properties: for every $1\le j \le \ell$ it holds that  $|a_{m_j}-a_{n_j}|\ge 1/\lambda$. It follows from the Cauchy criteria that  $(a_k)$ is convergent if any only if it has a finite number of fluctuations at every (finite) scale. Thus results of \cite{bourgain1990,lacey2000} could be interpreted as saying that: for almost every $x\in X$, at every scale, the number of fluctuations along $M_k(f,g)(x)$ is finite. It turns out that this number grows at most polynomially as $\lambda \to \infty$.

\begin{theorem}\label{t.main-fluctuations} Assume that $p_1,p_2,q$ satisfying \eqref{e.BHTrange}. Then there exists $R<\infty$ such that  for every $f_1\in L^{p_1}$ and $f_2 \in L^{p_2}$ the following holds: for almost every $x\in X$ the number of fluctuations in the sequence $(M_k[f_1,f_2](x))_{k\ge 1}$ at any scale $\lambda>0$ is bounded above by $O(\lambda^R)$, where the implicit constant is uniform over $\lambda$ but could depends on $x$ and $f_1,f_2$.
\end{theorem}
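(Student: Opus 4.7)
The plan is to reduce the fluctuation bound to an $L^q$ bound on the $r$-variation norm of the sequence $(M_k[f_1,f_2](x))_{k\ge 1}$ for some finite $r$. Recall that the $r$-variation norm of a sequence $(a_k)$ is
$$\|a_k\|_{V^r} := \sup_K \sup_{k_0 < k_1 < \cdots < k_K} \Big(\sum_{j=1}^K |a_{k_j} - a_{k_{j-1}}|^r\Big)^{1/r}.$$
The elementary observation is that if $(a_k)$ has $\ell$ fluctuations at scale $\lambda$ witnessed by disjoint intervals $[n_1,m_1), \dots, [n_\ell, m_\ell)$, then, after reordering so that $n_1 < m_1 \le n_2 < m_2 \le \cdots \le n_\ell < m_\ell$, taking these endpoints as an admissible subsequence gives $\|a_k\|_{V^r}^r \ge \ell/\lambda^r$, hence $\ell \le (\lambda \|a_k\|_{V^r})^r$. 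Consequently it suffices to prove the bilinear estimate
$$\big\| \|M_k[f_1,f_2](x)\|_{V^r_k} \big\|_{L^q(X)} \lesssim \|f_1\|_{L^{p_1}} \|f_2\|_{L^{p_2}}$$
for $(p_1,p_2,q)$ in the range \eqref{e.BHTrange}; then for almost every $x$ the $V^r$ norm is finite, and the fluctuation bound with $R = r$ follows.

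Next, the proof of this variation-norm bilinear estimate proceeds via Calder\'on's transference principle, reducing the ergodic bound to the corresponding estimate for the continuous truncated bilinear averages
$$B_t[F_1,F_2](x) = \frac{1}{t}\int_0^t F_1(x+s)F_2(x-s)\,ds, \qquad t > 0,$$
on $\R$: one needs to control $\|V^r_{t>0} B_t[F_1,F_2]\|_{L^q(\R)}$ in terms of $\|F_1\|_{L^{p_1}}\|F_2\|_{L^{p_2}}$. Decomposing $B_t$ on the frequency side, the contribution away from the resonant region where the two input frequencies nearly cancel can be handled by square-function estimates and bilinear Hardy--Littlewood maximal bounds, the variation norm being passed onto a resolution of the identity. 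The delicate part is the resonant piece, which is essentially the truncated bilinear Hilbert transform and which demands genuine time-frequency analysis in the style of Lacey's bilinear maximal estimate and the Demeter--Tao--Thiele treatment of the bilinear Hilbert transform.

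The main obstacle, and the main novel input, will be a variation-norm upgrade of the maximal Bessel inequalities used in those time-frequency arguments; once such an inequality is available, it can be inserted in place of the usual maximal Bessel bound inside the standard tree and tile decomposition and the corresponding size/energy estimates, replacing $\ell^\infty$ norms over scales by $V^r$ norms throughout the model sum. This would yield the required bilinear $L^{p_1}\times L^{p_2}\to L^q(V^r)$ bound on $\R$ for some finite $r$, and transferring back gives the analogous estimate for $(M_k[f_1,f_2](x))_k$. The preliminary fluctuation--variation inequality from the first paragraph then delivers the polynomial bound $\ell \le O(\lambda^R)$ with $R = r$ claimed in Theorem~\ref{t.main-fluctuations}.
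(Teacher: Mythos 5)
Your proposal is correct and follows essentially the same route as the paper: the elementary observation that $\ell$ fluctuations at scale $\lambda$ force $\|a\|_{V^r}^r \ge \ell\,\lambda^{-r}$ reduces Theorem~\ref{t.main-fluctuations} to the $L^q(V^r)$ bound of Theorem~\ref{t.main-varnorm-discrete}, which the paper then obtains by transference to the continuous bilinear averages and a time-frequency argument whose new input is a variation-norm Bessel inequality (Theorem~\ref{t.vnbi}). Your sketch of the analytic core (frequency splitting into a benign piece plus a bilinear-Hilbert-type resonant piece) is coarser than the paper's actual short-jump/long-jump decomposition of the variation norm followed by a kernel decomposition and wave-packet discretization, and you do not address the smoothing step needed to pass from smooth kernels ($r>2$) to the rough indicator kernel (which costs a larger threshold $R$), but the overall architecture and the identification of the key new lemma match the paper.
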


 For an interesting discussion about applications of fluctuation estimates in ergodic theory, we refer the readers to Avigad--Rute \cite{ar12+} (cf. Kovac \cite{kovac2014}).

Theorem~\ref{t.main-fluctuations} is an immediate consequence of Theorem~\ref{t.main-varnorm-discrete} below, which provides a more quantitative estimate. 
To formulate this result, we recall the notion of variation-norm. Given  $\Omega\subset \R$ and  $a: \Omega \to \C$, let its $r$-variation norm be
\begin{eqnarray*}\|a(t)\|_{V^r_t(\Omega)} &:=& \sup_{n,  N_0<\dots<N_n} (|a(N_0)|^r + \sum_{j=1}^n |a(N_j)-a(N_{j-1})|^r)^{1/r} \ \ ,
\end{eqnarray*}
in the sup we require $N_j\in \Omega$ for every $j$. We also use the semi-norm variant $\widetilde V^r$ defined similarly without the first term $|a(N_0)|^r$.

\begin{theorem}\label{t.main-varnorm-discrete} Assume that $p_1,p_2,q$ satisfying \eqref{e.BHTrange}. Then there exists $R<\infty$ such that the following holds for every $r>R$: 
\begin{eqnarray*}
\|M_k[f_1,f_2](x)\|_{L^q_x(V^r_k)} &\lesssim& \|f_1\|_{p_1} \|f_2\|_{p_2}
\end{eqnarray*}
\end{theorem}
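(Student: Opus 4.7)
My plan is to follow the route indicated in the abstract. First I would reduce via Calder\'on's transference principle to a bilinear variation-norm estimate for a truncated bilinear operator on $\R$; second, analyze this continuous operator through a time-frequency decomposition into bilinear Hilbert type model sums; and third, control the model sums by upgrading the Bessel/single-tree inequality of Lacey \cite{lacey2000} and Demeter--Tao--Thiele \cite{dtt2008} so that it accommodates a variation norm in the scale parameter.

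The transference step should be largely routine: since $V^r$ is translation-invariant in the parameter and monotone in the parameter set, Calder\'on's averaging trick transfers any bound of the form
\[
\bigl\|A_t[f,g]\bigr\|_{L^q_x(V^r_t(\R_+))} \lesssim \|f\|_{L^{p_1}(\R)} \|g\|_{L^{p_2}(\R)},
\qquad A_t[f,g](x) := \frac{1}{t}\int_0^t f(x-s)g(x+s)\,ds,
\]
to the discrete bilinear averages $M_k[f_1,f_2]$, provided one is careful in passing from continuous to discrete time via a standard sampling argument. It therefore suffices to prove the displayed continuous inequality for $r>R$ and $(p_1,p_2,q)$ in the BHT range \eqref{e.BHTrange}.

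To handle $A_t$ on $\R$, I would write the averaging kernel $\mathbf{1}_{[0,t]}/t$ as a smooth dyadic resolution and split $A_t$ according to the three natural frequency regions of the bilinear symbol. The $V^r$ norm in $t$ is then controlled, via a L\'epingle-type embedding, by an $L^r(d\log t)$ norm of the logarithmic $t$-derivative; this converts the problem to a bilinear estimate for a family, indexed by $t$, of smooth truncations of paraproducts and of the bilinear Hilbert transform. Expanded in wave packets, these become sums over tri-tiles whose $t$-dependence is concentrated in a smooth bump of scale $t$, to which the standard size/energy decomposition into trees applies.

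The heart of the argument, and the step I expect to be the main obstacle, is a variation-norm version of the single-tree inequality of Lacey and of Demeter--Tao--Thiele. In its original form, one controls a tree contribution by a product of a BMO/size-type bound on two factors with an orthogonality/$L^2$ bound on the third; the upgrade must replace the scalar norm on the $t$-dependent factor by a $V^r$ norm, while preserving compatibility with wave-packet orthogonality. Because $V^r$ is not self-dual, the classical $TT^*$ exploitation of orthogonality has to be replaced by a Schur-type almost-orthogonality estimate applied to the $V^r$-valued square function produced by the log-derivative reformulation; keeping the sharp geometric decay across scales within a tree is the delicate point. Once this variation-norm Bessel inequality is in place, the proof concludes by running the size/energy tree-selection algorithm together with the interpolation machinery for multilinear wave-packet sums in the full open BHT range, with $R$ determined by the L\'epingle exponent and the loss incurred in the upgraded Bessel inequality.
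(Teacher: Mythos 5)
Your high-level outline (transfer to $\R$, time–frequency analysis, upgrade the Bessel/single-tree inequality to accommodate a variation norm) matches the paper's strategy, but there are two concrete places where the proposal either misattributes the difficulty or would not go through as stated.

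\textbf{The transference step is not routine, and it is exactly where the restriction $r>R$ originates.} You declare the passage from the continuous bound on $A_t[f,g]$ to $M_k$ to be ``largely routine,'' and propose to prove the continuous bound directly for the sharp cutoff kernel $\mathbf{1}_{[0,t]}/t$ by ``a smooth dyadic resolution.'' But the core continuous theorem the paper proves (Theorem~\ref{t.main-varnorm}) holds only for kernels $K$ satisfying the smoothness conditions \eqref{e.K-condition1}--\eqref{e.K-condition2}, which the characteristic function does not. The paper explicitly remarks that extending the $r>2$ range to the rough kernel $K=\mathbf 1_{[0,1]}$ is an \emph{open problem}; instead it proves the result for the rough kernel (Theorem~\ref{t.average-varnorm}) only in the weaker range $r>R$, by approximating $\mathbf 1_{[0,t]}/t$ with smooth kernels $K_\alpha$ at cost $\alpha^{-(n_0+1)}$, trading off against a smallness gain from Lacey's bilinear maximal estimate via H\"older at the level of fluctuation counts, and then restoring the variation norm by a bilinear Marcinkiewicz interpolation. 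If you intend to argue directly with the sharp cutoff, you must explain why the wave-packet sums with a $1/|\xi|$-decaying multiplier with no derivative gain can still be organized into trees with summable geometric decay; as written, the proposal silently assumes this.

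\textbf{The claimed control of $V^r_t$ by an $L^r(d\log t)$ norm of the logarithmic $t$-derivative is not valid, and a different, more structured decomposition is needed.} For a general smooth $a(t)$ one only has $\|a\|_{V^r}\le \|a\|_{V^1}\le \int|a'(t)|\,dt$, which is an $L^1(d t)$ bound, not $L^r$. The paper instead separates $\|a\|_{V^r}$ into a long-jump part $\|a(2^n)\|_{V^r_n(\Z)}$ and a short-jump part $\big(\sum_n\|a\|^2_{\widetilde V^2[2^n,2^{n+1}]}\big)^{1/2}$, the latter indeed reduced to $L^1[1,2]$ in $s$ of a square function in $n$ of $t\,\partial_t B_t$, whose kernel $-(K+yK')$ now has vanishing Fourier transform at the origin (crucial for the square-function structure). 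Your proposal collapses these two contributions and the relevant cancellation. Separately, the mechanism you sketch for the variation-norm Bessel inequality (``a Schur-type almost-orthogonality estimate'' replacing $TT^*$) does not identify the actual new ingredient: a variation-norm extension of Bourgain's multi-frequency projection lemma (Theorem~\ref{t.bourgain-varnorm}), which is what makes the variation in the tree-scale parameter compatible with the strong disjointness of trees. Without some analogue of that multi-frequency $V^r$ bound, the tree-selection algorithm does not close.
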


Via a modification of standard transference arguments (which we will detail in Section~\ref{s.transference}), Theorem~\ref{t.main-varnorm-discrete} follows from $L^p$ estimates for bilinear singular integrals,  Theorem~\ref{t.main-varnorm} below. To formulate the result, we fix some notations.

Given $K:\R\to \C$ sufficiently nice, consider the  bilinear  operator with kernel $K$
\begin{eqnarray} \label{e.B-def}
B[f_1,f_2](x) = \int_{\mathbb R} f_1(x + y) f_2(x - y) K(y)\ dy \ \ ,
\end{eqnarray}
which is \emph{a priori} well-defined for Schwarz functions $f_1$ and $f_2$. For any $t>0$ let $B_t$ be the  bilinear operator with kernel $t^{-1} K(t^{-1}y)$.  

We will be interested in  $K:\R\to \C$ such that the following properties hold uniformly over $\xi \ne 0$:
\begin{eqnarray}
\label{e.K-condition1}
|\widehat{K}(\xi)| &\lesssim& \min (1,\frac{1}{|\xi|}) \ \ , \\
\label{e.K-condition2} |\frac{d^n}{d\xi^n} \widehat{K}(\xi)| &\lesssim_n& \min(\frac{1}{|\xi|^{n-1}},\frac{1}{|\xi|^{n+1}}), \qquad n \ge 1  \ \ .
\end{eqnarray}
We will in fact work with $K$ where \eqref{e.K-condition2} holds for $1\le n \le n_0$, here $n_0$ is some given large number; now the implicit constants are allowed to depend on $n_0$. In this case, we will say that $K$ satisfies \eqref{e.K-condition1} and \eqref{e.K-condition2} up to order $n_0$.

\begin{theorem} \label{t.main-varnorm}
Assume that $p_1,p_2,q$ satisfies \eqref{e.BHTrange} and $r>2$. Then there exists $n_0$ finite such that if $K$ satisfies \eqref{e.K-condition1} and \eqref{e.K-condition2} up to order $n_0$ then
\begin{eqnarray*}
 \|B_t(f_1, f_2)(x)\|_{L^q_x(V^r_t)} &\lesssim& \|f_1\|_{L^{p_1}(\R)} \|f_2\|_{L^{p_2}(\R)}  \ \ ,
\end{eqnarray*}
where the implicit constant may depend on $n_0$ and on the implicit constants of \eqref{e.K-condition1} and \eqref{e.K-condition2} for $1\le n \le n_0$.
\end{theorem}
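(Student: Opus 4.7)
The plan is to reduce the bilinear variation estimate to a sum of single-scale estimates via a Littlewood-Paley decomposition of the kernel, and then to obtain each single-scale estimate by a time-frequency tile decomposition in which the usual maximal Bessel inequality of Lacey and Demeter--Tao--Thiele is replaced by a variation-norm-valued analogue. Concretely, choose $\phi\in C_c^\infty(\R)$ supported in $\{1/2<|\xi|<2\}$ with $\sum_{k\in\Z}\phi(2^{-k}\xi)=1$ for $\xi\neq 0$, and set $m_k(\xi):=\phi(2^{-k}\xi)\widehat K(\xi)$. Hypotheses \eqref{e.K-condition1}--\eqref{e.K-condition2} give $\|m_k\|_\infty\lesssim 2^{-\max(k,0)}$ together with matching derivative estimates scaled by $2^{-nk}$ up to order $n_0$. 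Let $B_t^{(k)}$ denote the bilinear multiplier operator with symbol $m_k(t(\xi_2-\xi_1))$; then $B_t=\sum_k B_t^{(k)}$ on Schwartz pairs, and by the triangle inequality in $V^r$ (in $t$) and in $L^q$ (in $x$),
\[
\|B_t[f_1,f_2]\|_{L^q_x(V^r_t)}\;\le\;\sum_{k\in\Z}\|B_t^{(k)}[f_1,f_2]\|_{L^q_x(V^r_t)}.
\]
It will suffice to bound each summand by $C\,2^{-\delta|k|}\,\|f_1\|_{p_1}\|f_2\|_{p_2}$ for some $\delta>0$: for $k>0$ the factor $2^{-k}$ already appears in $\|m_k\|_\infty$, while for $k<0$ one exploits vanishing moments of $m_k$ inherited from smoothness of $\widehat K$ at the origin (or, equivalently, a standard Littlewood-Paley/paraproduct argument).

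Second, I would treat each $B_t^{(k)}$ by separating ``long'' and ``short'' variation. Writing $t=2^j s$ with $j\in\Z$, $s\in[1,2]$, one has the pointwise bound
\[
\|a_t\|_{V^r_t}\;\lesssim\;\Bigl(\sum_{j}\|a_{2^j s}\|_{V^r_{s\in[1,2]}}^r\Bigr)^{1/r}+\|a_{2^j s_0}\|_{V^r_{j\in\Z}}.
\]
The short part is handled by quantitative smoothness in $s$: the symbol $m_k(2^j s(\xi_2-\xi_1))$ depends smoothly on $s$, so a square-function identity and $V^2\subset V^r$ reduce it to a single bilinear $L^p$ bound of BHT/Lacey type in the range \eqref{e.BHTrange}. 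Rescaling absorbs the factor $2^k$, so the task collapses to bounding $\|B_{2^j}^{(0)}[f_1,f_2]\|_{L^q_x(V^r_{j\in\Z})}$, a long-variation estimate at dyadic scales for a bilinear multiplier of BHT type.

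For this core step I would run the standard tile decomposition of phase space into rectangles of area one, organize tiles into trees, and try to push the usual outer size/density induction through with $V^r$ in $j$ in place of $\sup_j$. The key new input is a \emph{variation-norm Bessel inequality}: instead of controlling $\sup_j$ of the contribution of a collection of trees (via the Bessel/energy inequality of Lacey and Demeter--Tao--Thiele), one needs to control the $V^r_j$-valued contribution. The hard part will be precisely this extension: it amounts to a bilinear analogue of a L\'epingle-type inequality for the scale-indexed tree sums, and the threshold $R$ in the statement of the theorem will come from the interpolation between a trivial $V^\infty$ (sup) bound and $V^2$ orthogonality via a Rademacher--Menshov/jump-counting argument in the spirit of Jones--Seeger--Wright. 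Once this variation-norm Bessel inequality is established, the rest of the tile argument proceeds as in Lacey/DTT, yielding the single-scale bound with the required $L^q_x(V^r_t)$ control; geometric summation in $k$ then gives Theorem~\ref{t.main-varnorm}.
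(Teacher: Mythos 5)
Your overall blueprint agrees with the paper: split the $V^r$ norm into long (dyadic) and short (within-octave) contributions, discretize each into wave packets, and drive the tile/tree argument with a variation-norm Bessel inequality replacing the maximal Bessel inequality of Lacey and Demeter--Tao--Thiele. That much is on target, and the paper spends Sections 3--12 carrying out exactly this program (with Theorem~\ref{t.vnbi} as the new Bessel inequality and Theorem~\ref{t.bourgain-varnorm} as the underlying multi-frequency variation estimate).

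However, your kernel decomposition step has a genuine gap. You set $m_k(\xi)=\phi(2^{-k}\xi)\widehat K(\xi)$, write $B_t=\sum_k B_t^{(k)}$, apply the triangle inequality in $L^q_x(V^r_t)$, and then claim $\|B_t^{(k)}[f_1,f_2]\|_{L^q_x(V^r_t)}\lesssim 2^{-\delta|k|}$. For $k>0$ the decay is genuine, but for $k<0$ your justification ("vanishing moments of $m_k$ inherited from smoothness of $\widehat K$ at the origin") is false: conditions \eqref{e.K-condition1}--\eqref{e.K-condition2} do not force $\widehat K$ to vanish at $0$. In fact they explicitly allow $\widehat K(0+)\ne 0\ne\widehat K(0-)$ (and even $\widehat K(0+)\ne\widehat K(0-)$), as in the model case $K=\tfrac12 1_{|y|\le 1}$, so $\|m_k\|_\infty\sim 1$ uniformly for $k\le 0$. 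There is no geometric decay, the triangle inequality over $k<0$ diverges, and the paraproduct rearrangement you mention does not rescue it by itself: summing the low-frequency pieces just reproduces the essential part of $B_t$.

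What the paper actually does (Lemma~\ref{l.Kdecomp}) is structurally different and is the crucial point. It writes $\widehat K(\xi)=\sum_{j}c_j\widehat K_j(2^j\xi)$ with $c_j\in\ell^1(\Z)$ and each $\widehat K_j$ a sum of \emph{all nonnegative dilates} of one bump, $\widehat K_j(\cdot)=\sum_{\ell\ge 0}\widehat\phi_j(2^\ell\cdot)$. This is designed so that the long-jump differences telescope cleanly, $\widehat K_j(2^{k_2}\xi)-\widehat K_j(2^{k_1}\xi)=\sum_{k_1\le\ell<k_2}\widehat\phi_j(2^\ell\xi)$, which turns each $B_{2^{k_n}}-B_{2^{k_{n-1}}}$ into a sum of single-scale wave-packet operators that feed directly into the model form of Theorem~\ref{t.model-varnorm}. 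Case~2 of Lemma~\ref{l.Kdecomp} explicitly handles the nonzero boundary values $\widehat K(0\pm)$ by peeling off a step function and telescoping it. Applying the raw LP decomposition and triangle inequality over $k$ \emph{before} the short/long split throws away exactly this telescoping cancellation. (Your short-jump treatment is essentially fine: after differentiating in $t$, the effective kernel $h$ satisfies $\widehat h(\xi)\lesssim|\xi|$, so there the low-frequency decay really is present, and the paper uses precisely that in Theorem~\ref{t.short-jump}.)
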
 


Comparing Theorem~\ref{t.main-varnorm} with Theorem~\ref{t.main-varnorm-discrete}, it can be seen that there is a discrepancy between the two ranges $r>2$ and $r>R$. With the current transference techniques, it seems that to get the  range $r>2$ for Theorem~\ref{t.main-varnorm-discrete}  one would need a version of Theorem~\ref{t.main-varnorm} that accommodates  rougher $K$'s, such as $K(y)=1_{|y|\le 1}$, which would be an interesting open problem left for future studies. In fact, in our transference argument we also prove a weaker version of Theorem~\ref{t.main-varnorm} for this particular $K$ where instead of $r>2$ we only have $r>R$ for some finite $R$, see Theorem~\ref{t.average-varnorm}.

Our proof of Theorem~\ref{t.main-varnorm} could be viewed as a variation-norm extension of Lacey's proof of the boundedness of the bilinear maximal function  in  \cite{lacey2000}, although we will follow more closely the expositions in  Demeter--Tao--Thiele \cite{dtt2008} and Demeter \cite{demeter2007}. The main new ingredient of the proof (compared to \cite{lacey2000,demeter2007,dtt2008}) is a variation-norm extension of maximal Bessel inequality for phase plane projections, which in turn relies on variation-norm estimates for Fourier projection operators associated with a collection of frequencies. Maximal estimates for these multi-frequency projection operators were introduced in Bourgain \cite{bourgain1990}, and  variation-norm estimates for smooth multi-frequency Fourier projections were also considered in \cite{not2010}. In our context, it turns out that we need variation-norm estimates for sharp multi-frequency Fourier projections, similar to the original settings considered by Bourgain. On the other hand, $L^2$ bounds would be sufficient for our purpose, and these estimates are proved in Theorem~\ref{t.bourgain-varnorm} by adapting an argument in \cite{not2010}.

We mention some closely related works in addition to \cite{bourgain1990,lacey2000,dtt2008,demeter2007}. A  dyadic version of Theorem~\ref{t.main-varnorm} was considered in  our previous work \cite{dop2013} (which in turn is an adaptation of Thiele \cite{thiele2001} to the variation-norm setting).  The method of proof in Demeter \cite{demeter2007} relies on a weaker version of Theorem~\ref{t.main-varnorm} where the variation-norms are replaced by finitary oscillation norms, which were also used by  Demeter--Lacey--Tao--Thiele \cite{dltt2008} (see also Demeter \cite{demeter2009,demeter2012}, Nazarov--Oberlin--Thiele \cite{not2010}) to improve the $L^p$ ranges in the Bourgain return time theorem.  For a nice introduction to variation-norm estimates in harmonic analysis, see Jones--Seeger--Wright \cite{jsw2008}.  The time-frequency analysis framework used in our proof originated from Lacey--Thiele's proof of the boundedness of the bilinear Hilbert transform \cite{lt1999,lt2000}.

\subsection{Outline of the paper}
 In Section~\ref{s.transference} we detail the transference argument that deduces Theorem~\ref{t.main-varnorm-discrete} from Theorem~\ref{t.main-varnorm}. In Section~\ref{s.shortlong} we discuss how a short-long decomposition of the variation-norm leads to a reduction of Theorem~\ref{t.main-varnorm} to two sub-theorems, which respectively treat the contribution of the long-jumps  and the contribution of the short-jumps. The proof of these Theorems will use restricted weak-type interpolation methods, which we recall in Section~\ref{s.lindual}. In Section~\ref{s.terms} we recall standard terminologies in time-frequency analysis, 
which will be used  in Section~\ref{s.discretization} to describe some wave packet representation for the operators underlying the long-jump and short-jump contributions. Some old and new auxiliary estimates will be recalled and proved in Section~\ref{s.auxiliary}, Section~\ref{s.varnorm-multiplier}, Section~\ref{s.size}. In Section~\ref{s.varnorm-bessel} we prove a new variation-norm extension of the maximal Bessel inequalities of Lacey \cite{lacey2000} and Demeter--Tao--Thiele \cite{dtt2008}, which will be used in  Section~\ref{s.sizelemma} and Section~\ref{s.long-concluding} to prove the desired estimates for the contribution of the long-jumps. In Section~\ref{s.short-concluding} we briefly discuss the needed cosmetic changes that could be applied (to the treatment of the long-jump contribution) to get the desired estimates for the short-jump contributions.

\subsection{Notational convention}

Given an interval $I$, we let $c(I)$ denote the center of the interval, and for each constant $C>0$ we let $CI$ denote the dilate of $I$ around its center by the factor $C$. We will use $\e$ and $\i$ to refer to the numbers $\exp(1)$ and $\sqrt{-1}$ respectively, leaving their non-boldfaced counterparts free for other purposes.

For every interval $I$ let $\widetilde \chi_I(x) = (1+(\frac{x-c(I)}{|I|})^2)^{-2}$.  

For each $s \geq 1$ we let $\ma^s$ denote the $L^s$ Hardy-Littlewood maximal operator 
\begin{eqnarray*}
\ma^s[f](x)  &:=& \sup_{R} \left(\frac{1}{2R} \int_{x - R}^{x + R}|f(y)|^s\ dy\right)^{1/s}
\end{eqnarray*}
and abbreviate $\ma := \ma^1.$

Throughout the paper we let $\mathcal F$ denote the Fourier transform 
\begin{eqnarray}\label{e.fourier-transform}
\widehat h(\xi) \quad \equiv \quad \mathcal F[h(.)](\xi) &:=& \int_{\R} {\bf e}^{-\i 2\pi \xi x} h(x)dx \ \ .
\end{eqnarray}
Note that with this normalization we have 
\begin{eqnarray*} h(x) &=& \int_{\R} {\bf e}^{\i 2\pi x\xi} \widehat h(\xi)d\xi \ \ .
\end{eqnarray*}

\section{The transference argument} \label{s.transference}

In this section we deduce Theorem~\ref{t.main-varnorm-discrete}  from Theorem~\ref{t.main-varnorm} using a variant of standard transference arguments in \cite{bourgain1990,dtt2008}.  Our first step is to show that the continuous version Theorem~\ref{t.main-varnorm-discrete} holds, namely
\begin{theorem}\label{t.average-varnorm}For every $t>0$ let $S_t$ denote the following operator
\begin{eqnarray*}
S_t[f_1,f_2](x) &=& \frac 1 t \int_0^t f_1(x+t) f_2(x-t)dt \ \ .
\end{eqnarray*}
Then for every $(p_1,p_2,q)$ satisfying \eqref{e.BHTrange} there exists $R<\infty$ such that for every $r>R$ it holds that
\begin{eqnarray}\label{e.average-varnorm}
\|S_t[f_1,f_2](x)\|_{L^q_x(V^r_t)} &\lesssim& \|f_1\|_{p_1} \|f_2\|_{p_2} \ \ .
\end{eqnarray}
\end{theorem}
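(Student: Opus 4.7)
The plan is to reduce Theorem~\ref{t.average-varnorm} to Theorem~\ref{t.main-varnorm} by decomposing the rough kernel $K_{0} := \mathbf{1}_{[0,1]}$ of $S_{t}$ into a smooth main piece plus dyadic smooth bumps localized near the two endpoints $y=0$ and $y=1$. Concretely, write $K_{0} = \phi + \eta_{L} + \eta_{R}$, where $\phi \in C_{c}^{\infty}((0,1))$ is a smooth bump with $\int \phi = 1$ and $\eta_{L}, \eta_{R}$ contain the jump discontinuities at $y=0$ and $y=1$, respectively. The smooth part $B_{t}^{\phi}$ satisfies the smoothness conditions \eqref{e.K-condition1}--\eqref{e.K-condition2} of Theorem~\ref{t.main-varnorm} directly and so yields the desired $V^{r}$ bound for any $r>2$.

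For the left-edge contribution, decompose $\eta_{L} = \sum_{k \geq 0} \alpha_{k}$ with $\alpha_{k}(y) = \alpha(2^{k} y)$ for a fixed smooth bump $\alpha$ supported in $[1/4, 1]$. The monotone change of variables $s = 2^{-k} t$ preserves the $V^{r}_{t}$ norm and shows $B_{t}^{\alpha_{k}}[f_{1}, f_{2}](x) = 2^{-k} B_{2^{-k} t}^{\alpha}[f_{1}, f_{2}](x)$. Applying Theorem~\ref{t.main-varnorm} to the fixed bump $\alpha$ gives a bound uniform in $k$, namely $\|B_{t}^{\alpha_{k}}[f_{1}, f_{2}]\|_{L^{q}_{x}(V^{r}_{t})} \lesssim 2^{-k} \|f_{1}\|_{p_{1}} \|f_{2}\|_{p_{2}}$ for any $r > 2$, and summing in $k$ yields a geometric bound.

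The main obstacle is the right-edge contribution $\eta_{R} = \sum_{k \geq 0} \beta_{k}$ with $\beta_{k}(y) = \beta(2^{k}(y-1))$. Here the analogous rescaling produces
\[
B_{t}^{\beta_{k}}[f_{1}, f_{2}](x) = 2^{-k}\, B_{2^{-k} t}^{\beta}[f_{1}(\cdot + t), f_{2}(\cdot - t)](x),
\]
whose inputs depend on the variation parameter $t$, so Theorem~\ref{t.main-varnorm} does not apply directly. To handle this contribution I would combine the bilinear maximal inequality of Lacey, which yields a $V^{\infty}$ bound of order $2^{-k}\|f_{1}\|_{p_{1}}\|f_{2}\|_{p_{2}}$ per piece, with a coarser $V^{r_{0}}$ bound obtained from Theorem~\ref{t.main-varnorm} applied to $\beta_{k}$ itself---where the smoothness conditions hold but only with constants that grow polynomially in $k$, owing to the modulation $\e^{-2\pi \i \eta}$ appearing in $\widehat{\beta_{k}}$. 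Interpolating via $V^{r} \leq (V^{\infty})^{1 - r_{0}/r}\,(V^{r_{0}})^{r_{0}/r}$ then yields a bound summable over $k$ provided $r > R$ for some finite $R$ determined by $r_{0}$, which accounts for the gap from the sharp $r > 2$ range of Theorem~\ref{t.main-varnorm}. An alternative realization of this step rewrites $B_{t}^{\beta_{k}}$, via integration by parts in the kernel, as a weighted average of $\{S_{t(1+w/2^{k})}\}$ over $w$, and closes a bootstrap on $\|S_{t}\|_{L^{q}(V^{r}_{t})}$ using the same ingredients.
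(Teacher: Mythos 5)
Your decomposition $K_0=\phi+\eta_L+\eta_R$ and the treatment of the smooth part and the left edge are fine: since each $\alpha_k$ is centred at $y=0$, the change of variables $s=2^{-k}t$ is an exact monotone reparametrization, so $\|B^{\alpha_k}_t\|_{V^r_t}=2^{-k}\|B^\alpha_s\|_{V^r_s}$ and Theorem~\ref{t.main-varnorm} applied once to $\alpha$ gives a geometric sum. This is a genuinely different packaging from the paper, which does not split the kernel but instead compares $S_t$ to a single smoothed $B_{\alpha,t}$ via a jump-count estimate and then upgrades to $V^r$ by Marcinkiewicz interpolation (Step~2); your direct $L^q(V^r)$ route would avoid that extra interpolation step, which is a simplification if it can be made to close.

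The gap is at the right edge, in two places. First, Lacey's theorem does \emph{not} give $\|\sup_t|B^{\beta_k}_t[f_1,f_2]|\|_{L^q}\lesssim 2^{-k}\|f_1\|_{p_1}\|f_2\|_{p_2}$. The kernel $t^{-1}\beta_k(t^{-1}y)$ is a spike near $y=t$ of width $\sim 2^{-k}t$ and height $\sim t^{-1}$; it cannot be pointwise dominated by $2^{-k}$ times a kernel at scale $t$ (the heights match, so the gain is only $O(1)$), and the $L^1$-normalization of the kernel does not translate into a $2^{-k}$ operator norm. Already in the linear analogue, testing $\sup_t\big|\int f(x-y)t^{-1}\beta_k(t^{-1}y)\,dy\big|$ against $f=1_{[0,2^{-k}]}$ gives an $L^p$ operator norm $\gtrsim 2^{-k(1-1/p)}$, not $2^{-k}$. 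The correct endpoint bound, which is what the paper's proof actually uses, comes from first applying H\"older on the thin support to get $|B^{\beta_k}_t[f_1,f_2](x)|\lesssim 2^{-k(1-1/u)}\,S^{\ast}[|f_1|^u,|f_2|^u](x)^{1/u}$ for $u>1$, and then applying Lacey at the triple $(p_1/u,p_2/u,q/u)$ (so one needs $1<u<\min(p_1,p_2,2q)$); the decay is $2^{-k(u-1)/u}$ with $(u-1)/u<1$. Second, the constants in \eqref{e.K-condition2} for $\beta_k$ grow like $2^{kn}$, not polynomially: $\widehat{\beta_k}(\xi)=2^{-k}\e^{-2\pi\i\xi}\widehat\beta(2^{-k}\xi)$, and for $1\lesssim|\xi|\lesssim 2^k$ one has $|\partial_\xi^n\widehat{\beta_k}|\approx 2^{-k}$ while \eqref{e.K-condition2} demands $\lesssim|\xi|^{-(n+1)}$, forcing a constant $\approx 2^{kn}$ at $|\xi|\approx 2^k$. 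This geometric growth is in fact what creates the $r>R>2$ gap; with only polynomial growth your own interpolation would close for every $r>r_0$ and hence $r>2$, contradicting the gap you invoke. With both corrections the interpolation does close, with threshold of the form $R\approx r_0(1+n_0\tfrac{u}{u-1})$, matching the paper's $R_0=2(1+(n_0+1)\tfrac{u_0}{u_0-1})$. The ``alternative realization'' also does not work as stated: integration by parts gives $B^{\beta_k}_t=\int g(w)\,S_{t(1+2^{-k}w)}\,dw$ with $\|g\|_{L^1}\approx1$ uniformly in $k$, so $\|B^{\beta_k}_t\|_{V^r_t}\lesssim\|S_t\|_{V^r_t}$ with no decay and the sum over $k$ diverges; one would again need the H\"older gain to get anything summable.
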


\proof[Proof of Theorem~\ref{t.average-varnorm}]
If $(p_1,p_2,q)$ that satisfies \eqref{e.BHTrange} we let $n_0=n_0(p_1,p_2)$ be the constant required in Theorem~\ref{t.main-varnorm}.

Fix $r$ below. We divide the proof into two steps. 

\underline{Step 1:} Let $R_0=2(1+(n_0+1)\frac{u_0}{u_0-1})$ where $u_0=\min(p_1,p_2,2q)>1$.  We first show that for $r>R_0$ it holds that
\begin{eqnarray}\label{e.average-jump}
\|\sup_{\lambda>0} \lambda N(S_t,\lambda)^{\frac 1 r}\|_{L^q_x} \lesssim \|f_1\|_{p_1} \|f_2\|_{p_2}
\end{eqnarray}

Clearly, we may find   $1<u<u_0$ and $r_0>2$ such that $r> r_0(1+(n_0+1)\frac{u}{u-1})$.  For brevity, let $n_1= (n_0+1)\frac{u}{u-1}$.

Now, for each $0<\alpha \le 1/2$ let $K_\alpha$ be a $C^\infty$ function supported in $[0,1]$ such that $1_{\alpha \le y \le 1-\alpha}  \le K_\alpha(y)\le 1$, we may construct $K$ such that $|K_\alpha^{(n)}| \lesssim \alpha^{-n}$ for any $n\ge 1$.

It is clear that for any $n\ge 0$ and $k\ge 0$ we have
\begin{eqnarray*}
\frac{d^n}{d\xi^n} \widehat K_\alpha(\xi) &\lesssim_{n,k}& \alpha^{-k} (1+|\xi|)^{-k}  \ \ .
\end{eqnarray*}
Therefore  $\alpha^{n_0+1} K_\alpha$ satisfies the assumptions \eqref{e.K-condition1} and \eqref{e.K-condition2} up to order $n_0$ (we emphasize that the implicit constants are independent of $\alpha$). Let $B_{t,\alpha}$ denote the bilinear operator with kernel $\frac 1 t K_\alpha( \frac y t)$. It follows that for any $r_0>2$ we have
\begin{eqnarray}\label{e.Balpha-varnorm}
\alpha^{n_0+1}  \|B_{\alpha,t}[f_1,f_2](x)\|_{L^q_x(V^{r_0}_t)} \lesssim \|f_1\|_{p_1} \|f_2\|_{p_2} \ \ .
\end{eqnarray}

Let $S^\ast$ denote the positive maximal version of $S_t$, namely 
\begin{eqnarray*} S^\ast[f_1,f_2](x) &=& \sup_{t>0} S_t[|f_1|,|f_2|](x) \ \ .
\end{eqnarray*}
By the bilinear maximal estimate of Lacey, it holds that
\begin{eqnarray*}
\|S^\ast[f_1,f_2]\|_{L^q} \lesssim \|f_1\|_{p_1}\|f_2\|_{p_2} \ \ .
\end{eqnarray*}
Let $u>1$ be such that $u<\min(p_1,p_2,2q)$, then applying the above estimate for the triple $(\frac{p_1}u, \frac {p_2}u, \frac {q}u)$ we obtain
\begin{eqnarray}\label{e.Lacey}
\|S^\ast[|f_1|^u, |f_2|^u]^{1/u}\|_{L^q} \lesssim \|f_1\|_{p_1}\|f_2\|_{p_2}
\end{eqnarray}

Now, for brevity in the following we understand that $S_t=S_t[f_1,f_2](x)$, $B_{\alpha,t}=B_{\alpha,t}[f_1,f_2](x)$, $S^\ast = S^\ast[f_1,f_2](x)$, and $S^{\ast,u}=S^\ast[|f_1|^u, |f_2|^u](x)^{1/u}$. 

Given any sequence (or functions) $\{a(t), t\in \Omega\}$,  let $N(a,\lambda)$ be the number of fluctuations with respect to scale $1/\lambda$, i.e. the largest $k$ such that there exists a sequence of $k$ disjoint intervals $[N_0,N_1)$, \dots, $[N_{k-1}, N_k)$, where each $N_j\in \Omega$ and furthermore
$|a_{N_j}-a_{N_{j-1}}|>\lambda$ for every $1\le j \le k$.

For any $t>0$, using Holder's inequality we have
\begin{eqnarray*}
|S_t  - B_{\alpha,t}| &\le&   (2 \alpha)^{(u-1)/u} S^{\ast,u} \ \ .
\end{eqnarray*}
Let $\beta  := (2\alpha)^{(u-1)/u}$, we have
\begin{eqnarray}\label{e.BtoS}
N(S_t, 3 \beta S^{\ast,u}) \le N(B_{\alpha,t}, \beta S^{\ast,u})
\end{eqnarray}
here the fluctuation counts are used with respect to the $t$ variable. Using the basic estimate $\lambda N(a,\lambda)^{1/r_0} \lesssim  \|a\|_{V^{r_0}}$ and using \eqref{e.Balpha-varnorm},  for every $r_0>2$ we have
\begin{eqnarray*}&& \Big\|\beta^{1+n_1} S^{\ast,u} \cdot N\Big(S_t, 3\beta S^{\ast,u} \Big)^{1/r_0} \Big\|_{L^q_x} =\\
&=& (2\alpha)^{1+n_0}\Big\| \beta S^{\ast,u} \cdot N\Big(S_t, 3\beta S^{\ast,u} \Big)^{1/r_0} \Big\|_{L^q_x}   \lesssim \\
&\lesssim& \alpha^{1+n_0}  \|B_{\alpha,t}[f_1,f_2](x)\|_{L^q_x(V^{r_0}_t)} \lesssim \|f_1\|_{p_1} \|f_2\|_{p_2} \ \ .
\end{eqnarray*}
Using the Holder inequality and \eqref{e.Lacey}, it follows that
\begin{eqnarray*}
&&\|\beta S^{\ast,u}  \cdot N(S_t, 3\beta S^{\ast,u})^{\frac 1{r_0(1+n_1)}} \|_{L^{q}_x}  \lesssim \\
&\lesssim& \|S^{\ast,u}\|_{L^q_x}^{\frac{n_1}{1+n_1}} \cdot \Big\|\beta^{1+n_1} S^{\ast,u} \cdot N\Big(S_t, 3\beta S^{\ast,u} \Big)^{\frac 1{r_0}} \Big\|_{L^q_x}^{\frac 1{1+n_1}}
\end{eqnarray*}
therefore
\begin{eqnarray}\label{e.betaS}
\|\beta S^{\ast,u}  \cdot N(S_t, 3\beta S^{\ast,u})^{\frac 1{r_0(1+n_1)}} \|_{L^{q}_x}  &\lesssim&\|f_1\|_{p_1} \|f_2\|_{p_2}
\end{eqnarray}
We note that this estimate holds for any $0\le \beta \le 1$.  Letting $\beta=2^{-k}/3$, $k\ge 0$, and using the triangle inequality it follows that
\begin{eqnarray*}
\|\sum_{k\ge 0} 2^{-(1+\epsilon)k} S^{\ast,u} N \Big(S_t, 2^{-k} S^{\ast,u}\Big)^{\frac 1{r_0(1+n_1)}}\|_{L^q_x} 
&\lesssim& \|f_1\|_{p_1}\|f_2\|_{p_2} \ \ .
\end{eqnarray*}
Since $N(S_t,\lambda)=0$ for $\lambda \ge 2 S^{\ast}$, and since $S^{\ast,u}\ge S^\ast$, it follows that
\begin{eqnarray*}\| (S^{\ast,u})^{-\epsilon} \sup_{\lambda>0} \lambda^{1+\epsilon}  N(S_t,\lambda)^{\frac 1 {r_0(1+n_1)}}\|_{L^q_x} 
&\lesssim& \|f_1\|_{p_1} \|f_2\|_{p_2} \ \ .
\end{eqnarray*}
Using Holder's inequality and using \eqref{e.Lacey}, we obtain
\begin{eqnarray*}
\|  \sup_{\lambda>0} \lambda N(S_t,\lambda)^{\frac 1 {r_0(1+n_1)(1+\epsilon)}}\|_{L^q_x} &\lesssim& \|f_1\|_{p_1} \|f_2\|_{p_2}
\end{eqnarray*}
therefore by choosing $\epsilon$  small so that $r>(1+\epsilon)r_0(1+n_1)$ we obtain \eqref{e.average-jump}.

\underline{Step 2:} We now prove \eqref{e.average-varnorm}; the argument below is similar to an argument in \cite{dmt2012}. We plan to use bilinear Marcinkiewicz interpolation: given each $(p_1,p_2,q)$ satisfying \eqref{e.BHTrange} we may let $R$ to be the largest $R_0$ of the exponents associated with any four rectangular weak-type endpoints. Let $r>R$, then we could use \eqref{e.average-jump} at all of these weak-type endpoints. By monotone convergence it suffices to show that for any increasing sequence of measurable functions $(N_k)$ it holds that
\begin{eqnarray*}
\|(\sum_{k} |S_{N_k}-S_{N_{k-1}}|^r)^{1/r}\|_q &\lesssim& \|f_1\|_{p_1} \|f_2\|_{p_2} \ \ .
\end{eqnarray*}
Let $T[f_1,f_2]=(\sum_{k} |S_{N_k}[f_1,f_2]-S_{N_{k-1}}[f_1,f_2]|^r)^{1/r}$. By bilinear interpolation it suffices to prove the weak-type estimate
\begin{eqnarray*}
\lambda |\{x: T[f_1,f_2](x)>\lambda\}|^{\frac 1 q} &\lesssim&  \|f_1\|_{p_1} \|f_2\|_{p_2}
\end{eqnarray*}
with uniform implicit constants over $\lambda>0$.  By scaling symmetries and dilation symmetry of $S_t$, we may assume $\lambda=\|f_1\|_{p_1}=\|f_2\|_{p_2}=1$. Let  
\begin{eqnarray*}
E &=& \{x: \sup_{k} |S_{N_k}-S_{N_{k-1}}| > 1\}
\end{eqnarray*}
 Clearly $|E| \le \|N(S_t,1)\|_q \lesssim 1$. For $x\not\in E$, we estimate $T[f_1,f_2](x)$ by considering level sets for $|S_{N_k}-S_{N_{k-1}}|$ (as a function of $k$) and obtain:
\begin{eqnarray*} T[f_1,f_2](x)^q  &\lesssim&  \Big(\sum_{j\ge 0} 2^{-jr} N(S_t,2^{-j})\Big)^{q/r}\\
&\lesssim_{\epsilon,q}& \sum_{j\ge 0} 2^{-(1-\epsilon)q j} N(S_t,2^{-j})^{q/r} 
\end{eqnarray*}
Therefore by the Chebysheff inequality we obtain
\begin{eqnarray*}
|\{x: T[f_1,f_2](x)>1\}|^{1/q} &\lesssim&
|E|^{1/q}+ |\{x\not\in E: T[f_1,f_2](x) >1\}|^{1/q} \\
&\lesssim& 1+ (\int \sum_{j\ge 0} 2^{-(1-\epsilon)q j} N(S_t,2^{-j})^{q/r}dx)^{1/q}  \ \ .
\end{eqnarray*}
Using \eqref{e.average-jump} for $\widetilde r=\frac r{1+\delta}$ where $\delta>0$ is sufficiently small so that $\widetilde r>R$, we have
\begin{eqnarray*}
\int  N(S_t,2^{-j})^{q/r}dx &=& \|N(S_t,2^{-j})^{1/\widetilde r}\|_{L^{q/(1+\delta)})x}^{q/(1+\delta)} \lesssim  2^{jq/(1+\delta)}  
\end{eqnarray*}
Therefore
\begin{eqnarray*}
|\{x: T[f_1,f_2](x) >1\}|^{1/q} &\lesssim&
 1+ (\sum_{j\ge 0} 2^{-(1-\epsilon)qj} 2^{jq/(1+\delta)})^{1/q} \quad \lesssim \quad 1
\end{eqnarray*}
by choosing $\epsilon>0$ sufficiently small depending on $\delta$ (which in turn depends on $r$ and $R$). This completes the proof of \eqref{e.average-varnorm}.
\endproof

We now transfer Theorem~\ref{t.average-varnorm} to the integers. Fix $r>R$. We'll show that for any two sequences $f_1(n)$ and $f_2(n)$  indexed by $\mathbb Z$ it holds that
\begin{eqnarray*}
\|\widetilde M_{k}[f_1, f_2](n)\|_{L^q_n(V_k^r)} &\lesssim& \|f_1\|_{\ell^{p_1}(\Z)} \|f_2\|_{\ell^{p_2}(\Z)} \ \ ,\\
\widetilde M_k[f_1,f_2](n) &:=& \frac 1 k \sum_{m=0}^{k-1} f_1(n+m)f_2(n-m) \ \ .
\end{eqnarray*}
To see this, let $S_t$ be the bilinear  operator defined in \eqref{e.B-def} with kernel $t^{-1} 1_{0<y<t}$, where $t>0$. We extend $f_1$ and $f_2$ from $\Z$ to $\mathbb R$ by letting:
 
\begin{itemize}
\item [(i)] $F_1(x)=f_1(n)$ if there exists $n\in \Z$ such that $|x-(n+1/2)| <1/3$, and $F_1(x)=0$ otherwise;
\item[(ii)] $F_2(x)=f_2(n)$  if there exists $n\in \Z$ such that $|x-(n-1/2)| < 1/3$, and $F_2(x)=0$ otherwise.
\end{itemize}
 Let $n\in \Z$ and $x\in [n-\frac 1 6, n + \frac 1 6]$. Then for any $m \in \mathbb Z$  it holds that
\begin{eqnarray*}
\int_{m<y<m+1}  F_1(x+y)F_2(x-y)dy  
&=& \frac {2}3  f_1(n+m) f_2(n-m)  
\end{eqnarray*}
Thus for any $k\ge 0$ we have
\begin{eqnarray*}
S_k(F_1,F_2)(x) &=&  \frac1{k}\int_{0<y<k} F_1(x+y) F_2(x-y)dy =  \frac {2}3 \widetilde M_k[f_1, f_2](n) \ \ ,
\end{eqnarray*}
and consequently
\begin{eqnarray*}
\|\widetilde M_{k}[f_1,f_2](n)\|_{V_k^r} 
&\lesssim& \inf_{x\in [n-\frac 1 6, n+\frac 1 6]} \|S_{k}(F_1, F_2)(x)\|_{V^r_k} \ \  .
\end{eqnarray*}
It follows that
\begin{eqnarray*}
\| \widetilde M_{k}[f_1,f_2](n)\|_{L^q_n(V^r_k)}
 &\lesssim&  \|S_{t}(F_1,F_2)(x)\|_{L^q_x(V^r_t)} \ \ ,
\end{eqnarray*}
and using Theorem~\ref{t.average-varnorm} we can bound the right hand side by
\begin{eqnarray*}
&\lesssim& \|F_1\|_{L^{p_1}(\R)}\|F_2\|_{L^{p_2}(\R)}  = C' \|f_1\|_{\ell^{p_1}(\Z)} \|f_2\|_{\ell^{p_2}(\Z)} \ \ .
\end{eqnarray*}

Our next step is to transfer the result on $\Z$ to a more general setting. Let  $T$ be a measure-preserving transformation on a complete probability space  $(X,\Omega,\mu)$. Let $f$ and $g$ be given. 

Fix a large integer $N$, which we will send to $\infty$ later. All implicit constants below   are independent of $N$ and $x$.

For fixed $x$, let $M(f,g,N)(x)$ be the $r$-variation norm of the finite sequence indexed by $0\le k \le N$:
\begin{eqnarray*}
\frac 1 k  \sum_{0\le m \le k-1} f(T^m x) g(T^{-m}x)  \qquad  .
\end{eqnarray*}

Note that for every $0\le n \le N$ the value of $M(f,g,N)(T^n x)$  depends only on $f(T^mx)$ and $g(T^m x)$ with $|m|\le 2N$. Thus, using the $\mathbb Z$-result, it follows that
\begin{eqnarray*}
\sum_{|n|\le N} |M(f,g,N)(T^n x)|^q &\lesssim& \big(\sum_{|m|\le 2N} |f(T^mx)|^{p_1}\big)^{q/p_1} \big(\sum_{|m|\le 2N} |g(T^mx)|^{p_2}\big)^{q/p_2}
\end{eqnarray*}
Integrating over $x\in X$ and using the H\"older inequality, we obtain
\begin{eqnarray*} &&\sum_{|n|\le N} \int_X |M(f,g,N)(T^n x)|^q d\mu(x) \quad \lesssim\\
&\lesssim&  \Big(\sum_{|m|\le 2N} \int_X |f(T^mx)|^{p_1} d\mu(x)\Big)^{q/p_1} \Big(\sum_{|m|\le 2N} \int_X |g(T^mx)|^{p_2} d\mu(x)\Big)^{q/p_2} \ \ .
\end{eqnarray*}
Using the fact that $T$ is bi-measure preserving on $(X,\mu)$, we obtain
\begin{eqnarray*}
\| M(f,g,N)\|_{L^q(X,\mu)} &\lesssim& \|f\|_{L^{p_1}(X,\mu)} \|g\|_{L^{p_2}(X,\mu)} \ \ , 
\end{eqnarray*}
and by sending $N \to \infty$ we obtain  the conclusion of Theorem~\ref{t.main-varnorm-discrete}. This completes the transference argument, and the rest of the paper is devoted to the proof of Theorem~\ref{t.main-varnorm}. We'll assume that $K$ satisfies \eqref{e.K-condition1} and \eqref{e.K-condition2} up to some large order that may depend on $p_1,p_2,q,r$. We will also free the symbol $S_t$ which could be used in the future for  different purposes.

\section{Separation of  short   and long jumps}\label{s.shortlong}

For any function $a(t)$ on $\R$ it is not hard to see that \begin{eqnarray*}
\|a(t)\|_{V^r_t} &\lesssim& \|a(t)\|_{S_t} + \|a(2^n)\|_{V^r_n(\Z)} \ \ , \\
\|a(t)\|_{S_t} &:=& (\sum_{n\in \Z} \|a(t)\|^2_{\widetilde V^2_t([2^{n},2^{n+1}])})^{1/2}
\end{eqnarray*}
Applying this estimate to $a(t)=B_t[f_1,f_2](x)$, the proof of Theorem~\ref{t.main-varnorm} is divided into two parts: the first part handle the long-jumps (i.e. $\|a(2^n)\|_{V^r_n(\Z)}$) and the second part handles the short jumps (i.e. $\|a(t)\|_{S_t}$).


\begin{theorem}\label{t.long-jump} For any $r>2$ and $p_1,p_2,q$ satisfying \eqref{e.BHTrange} it holds that
\begin{eqnarray*}
\|B_{2^n}(f_1, f_2)(x)\|_{L^q_{x\in \R}(V^r_{n \in \Z})} &\lesssim_{p_1,p_2,r}& \|f_1\|_{p_1} \|f_2\|_{p_2}
\end{eqnarray*}
\end{theorem}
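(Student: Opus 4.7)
\textbf{Proof proposal for Theorem~\ref{t.long-jump}.} The plan is to carry out a variation-norm extension of the time-frequency analysis Lacey used in \cite{lacey2000} for the bilinear maximal function (as streamlined in \cite{demeter2007,dtt2008}), where the only structural change from the scalar maximal case is that the single maximal Bessel inequality gets replaced by a variation-norm Bessel inequality for phase-plane projections. My overall route is: reduce to a model sum via a Littlewood--Paley decomposition of $K$; discretize into tri-tiles of varying spatial scale; linearize the $V^r_n$ norm; run restricted weak-type interpolation in the style of Section~\ref{s.lindual}; and feed the resulting tree sum into the new variation-norm Bessel estimate from Section~\ref{s.varnorm-bessel}.

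Concretely, I would first split $\widehat{K}=\sum_{k\in\Z}\widehat{K^{(k)}}$ with $\widehat{K^{(k)}}$ supported in $|\xi|\sim 2^k$; conditions \eqref{e.K-condition1}--\eqref{e.K-condition2} give exponential decay in $|k|$ for the contribution of each piece, so it suffices to bound a single piece, say $k=0$. At scale $t=2^n$ this piece is a bilinear multiplier operator with symbol $\widehat{K^{(0)}}(2^n(\xi_2-\xi_1))$, frequency-localized to $|\xi_2-\xi_1|\sim 2^{-n}$, and therefore is essentially a smooth truncation of the bilinear Hilbert transform at spatial scale $2^n$. I would then use the wave-packet representation from Section~\ref{s.discretization} to write each such operator as a sum $\sum_{P\in \mathbf P_n}|I_P|^{-1/2}\langle f_1,\varphi_{P_1}\rangle\langle f_2,\varphi_{P_2}\rangle\varphi_{P_3}$ over tri-tiles of spatial scale $2^n$. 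The $V^r_n$ norm is linearized in the usual way by a measurable choice, at each $x$, of a nested sequence $(n_j(x))_j$ and unit $\ell^{r'}$-coefficients $(\epsilon_j(x))_j$, converting the theorem into a trilinear-form bound on a model sum over tri-tiles at all scales simultaneously. Following the restricted weak-type scheme, I fix finite-measure sets $E_1,E_2,E_3$, test against $f_j=1_{F_j}$ for $F_j\subset E_j$, pass to a major subset of $E_3$ where the bilinear maximal function of $(f_1,f_2)$ is controlled, and reduce to proving $|\Lambda(1_{F_1},1_{F_2},1_{E_3'})|\lesssim |E_1|^{1/p_1}|E_2|^{1/p_2}|E_3|^{1/q'}$ on appropriate major subsets. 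The standard tree-selection argument then gives a sum $\sum_T \mathrm{size}(T)\cdot\mathrm{density}(T)$ controllable by the usual density/size lemmas, provided one has the global orthogonality ingredient that bounds the energy across all selected trees.

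The main obstacle, and the only essentially new ingredient relative to the scalar maximal case, is that last orthogonality statement: the classical maximal Bessel inequality of \cite{lacey2000,dtt2008} --- which bounds a supremum in $n$ of phase-plane projections --- must be upgraded to a $V^r_n$-statement, uniformly over a collection of trees at varying scales. I would prove this in $L^2$, which suffices once the tile-level estimates are in place, by a square-function/Fourier-analytic reduction to variation-norm estimates for \emph{sharp} multi-frequency Fourier projection operators associated to a finite set of frequencies, in the spirit of \cite{bourgain1990}; the argument of \cite{not2010} for smooth multi-frequency projections has to be adapted to the sharp-indicator setting, and it is precisely here that the hypothesis $r>2$ enters --- matching the Jones--Seeger--Wright threshold below which no variation-norm square function estimate can hold. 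Once this variation-norm Bessel inequality is in hand, plugging it into the tree decomposition and summing over scales via restricted weak-type interpolation yields Theorem~\ref{t.long-jump} on the entire range \eqref{e.BHTrange} with the sharp threshold $r>2$.
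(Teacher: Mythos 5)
Your plan matches the paper's route almost step for step: linearize $V^r_n$, discretize into tri-tiles \`a la \cite{lacey2000,demeter2007,dtt2008}, run restricted weak-type interpolation near the vertices of the hexagon, reduce the tree selection to a $V^r$-Bessel inequality for tile projections, and prove that Bessel inequality in $L^2$ via variation-norm bounds for \emph{sharp} multi-frequency projections adapted from \cite{not2010}. That is exactly what the paper does, and you have correctly identified the variation-norm Bessel inequality (Theorem~\ref{t.vnbi}, fed by Theorem~\ref{t.bourgain-varnorm}) as the one genuinely new ingredient.

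One step as written would fail, though. You split $\widehat K=\sum_k\widehat{K^{(k)}}$ with $\widehat{K^{(k)}}$ supported in $|\xi|\sim 2^k$ and claim that \eqref{e.K-condition1}--\eqref{e.K-condition2} give \emph{exponential decay in $|k|$}. That is false for $k<0$: \eqref{e.K-condition1} only gives $|\widehat K(\xi)|\lesssim\min(1,1/|\xi|)$, so if $\widehat K(0\pm)\neq 0$ the low-frequency pieces are all of size $\sim 1$ and do not sum. More importantly, the plain Littlewood--Paley split does not expose the cancellation one actually needs: the quantity being estimated involves differences $B_{2^{k_2}}-B_{2^{k_1}}$, and what makes these manageable is that $\widehat K(2^{k_2}\xi)-\widehat K(2^{k_1}\xi)$ is concentrated on the intermediate band $2^{-k_2}\lesssim|\xi|\lesssim 2^{-k_1}$. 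The paper captures this in Lemma~\ref{l.Kdecomp} by writing $\widehat K(\xi)=\sum_j c_j\widehat K_j(2^j\xi)$ with $\{c_j\}\in\ell^1$ and, crucially, each $\widehat K_j=\sum_{\ell\ge 0}\widehat\phi_j(2^\ell\cdot)$ a sum of dilates of a single band-limited bump, so the telescoping identity $\widehat K_j(2^{k_2}\xi)-\widehat K_j(2^{k_1}\xi)=\sum_{k_1\le\ell<k_2}\widehat\phi_j(2^\ell\xi)$ turns each $V^r$-jump into a sum of single-scale wave-packet operators; this is what produces the model sum of Theorem~\ref{t.model-varnorm} with the linearizing coefficients $a_P$ constant on dyadic-scale blocks. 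To get this, the case $\widehat K(0\pm)\ne 0$ has to be handled separately (paper's Case~2) by subtracting off the one-sided limits. Once you replace your decomposition with this one, the rest of your outline lines up with Sections~\ref{s.discretization}--\ref{s.long-concluding}; a cosmetic remark is that the paper's tree estimate uses three sizes (\eqref{treeestimate}, one per function) rather than the size--density pairing you mention, and the exceptional set is removed from $F_{j_0}$ with $j_0$ depending on which vertex of the hexagon you are near, not always from $F_3$.
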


\begin{theorem}\label{t.short-jump} Assume  that $p_1,p_2,q$ satisfy \eqref{e.BHTrange} and $r>2$. Assume that $K_s,1\le s\le 2$ is a family of kernels such that $K_s$ satisfies \eqref{e.K-condition1}, \eqref{e.K-condition2} up to a high order $n_0=n_0(p_1,p_2,q,r)$, and furthermore
\begin{eqnarray}\label{e.Ks-condition}
\widehat K_s(\xi) \lesssim |\xi| \ \ , \ \ \xi \ne 0 \ \ ,
\end{eqnarray}
and the implicit constants are uniform over $1\le s \le 2$. Then  it holds that
\begin{eqnarray*}
\|\int_1^2 (\sum_{n\in \Z} |\int f_1(\cdot +y) f_2(\cdot - y) 2^{-n} K_s(2^{-n}y)dy|^2)^{1/2} ds\|_{q} &\lesssim& \|f_1\|_{L^{p_1}(\R)} \|f_2\|_{L^{p_2}(\R)} \ \ .
\end{eqnarray*}
\end{theorem}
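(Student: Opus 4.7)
The plan is to prove Theorem~\ref{t.short-jump} by running the same time-frequency framework used for Theorem~\ref{t.long-jump}, treating the $\ell^2_n$ square function and the $\int_1^2 ds$ averaging as additional indices absorbed into the wave-packet model sum, as promised in Section~\ref{s.short-concluding}. The extra cancellation \eqref{e.Ks-condition} supplies Littlewood--Paley orthogonality across dyadic scales: combined with \eqref{e.K-condition1} one has $|\widehat{K_s}(2^n \eta)| \lesssim \min(2^n |\eta|, 2^{-n}/|\eta|)$ uniformly in $s$, hence $\sum_{n \in \Z} |\widehat{K_s}(2^n \eta)|^2 \lesssim 1$ uniformly in $\eta \neq 0$ and $s \in [1,2]$. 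This is the structural reason the $\ell^2$ square function, rather than a weaker $V^r$ norm, is the appropriate object on the short side.

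First I would linearize and dualize, choosing a measurable $\epsilon_n(s,x)$ with $\sum_n |\epsilon_n(s,x)|^2 = 1$ and a test function $g \in L^{q'}$ with $\|g\|_{q'} \le 1$, reducing the estimate to a uniform bound (over $\epsilon$ and $g$) on the trilinear form
\[
\Lambda[f_1,f_2,g] \;=\; \int_1^2 \int_\R \sum_{n \in \Z} \epsilon_n(s,x) \, B_{2^n, s}[f_1,f_2](x) \, g(x) \, dx \, ds,
\]
where $B_{t,s}$ denotes the bilinear operator with kernel $t^{-1} K_s(t^{-1} y)$. By the restricted weak-type interpolation machinery of Section~\ref{s.lindual}, it then suffices to prove suitable weak-type estimates for $\Lambda$ when $f_1$, $f_2$, $g$ are normalized indicators of measurable sets.

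Next I would apply the tri-tile discretization of Section~\ref{s.discretization} to the bilinear symbol $\widehat{K_s}(2^n(\xi_2 - \xi_1))$, treating $(n,s)$ as extra labels on tiles. The resulting model sum has the same shape as in the long-jump proof, and can be handled by the tree-selection/size/mass machinery of Sections~\ref{s.size}--\ref{s.long-concluding}. The variation-norm Bessel inequality of Section~\ref{s.varnorm-bessel} remains the pivotal ingredient: the $\ell^2_n$ square function over any single tree is controlled, after the Littlewood--Paley orthogonalization above, by the same multi-frequency projection norm which dominates the long-jump variation. Taking $n_0 = n_0(p_1,p_2,q,r)$ large enough to accommodate the required decay in the wave-packet error terms, summing the tree contributions closes the estimate for every $r > 2$.

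The main anticipated obstacle is essentially bookkeeping: one must verify that the $\ell^2_n$ orthogonality really does survive the passage from symbols to wave packets, so that the $n$-sum can be brought inside the size estimate of each tree without degrading the interpolation exponents. This is why the changes from the long-jump argument are only cosmetic---no fundamentally new analytic inequality is required beyond those already proved---but the inner $\ell^2$ norm must be carefully threaded through the tree decomposition, the selection of sizes, and the final application of the variation-norm Bessel bound.
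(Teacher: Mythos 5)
Your high-level plan agrees with the paper's: linearize the $\ell^2_n$ sum and the $\int_1^2 ds$ average via Cauchy--Schwarz duality, reduce to restricted weak-type estimates for the trilinear form $\Lambda_{\mathrm{short},s}$ uniformly in $s$, discretize into the tri-tile model (Theorem~\ref{t.model-square}), and run the same size/tree-count/Bessel machinery as in the long-jump case.

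A few points of comparison, though, where the paper's implementation is slightly different and simpler than what you sketch. The role of \eqref{e.Ks-condition} in the paper is not to supply the pointwise bound $\sum_n |\widehat{K_s}(2^n\eta)|^2 \lesssim 1$ directly; rather, it ensures that the decomposition of Lemma~\ref{l.Kdecomp} lands in ``Case 1'' with coefficients $c_j = 2^{-|j|}$ summable \emph{without} the nonvanishing-at-the-origin piece $\varphi$ that appears in Case 2. That piece is precisely what would wreck the $\ell^2_n$ square function, since a fixed Hilbert-transform-type symbol contributes a constant to infinitely many $n$. Your orthogonality heuristic $\sum_n |\widehat{K_s}(2^n\eta)|^2 \lesssim 1$ is a correct corollary and a good structural explanation, but the proof proceeds through the kernel decomposition, not through this scalar estimate. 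Second, and more importantly: you describe the variation-norm Bessel inequality as ``the pivotal ingredient,'' as if the same $V^r$ machinery (with the $N^\epsilon$-loss Theorem~\ref{t.bourgain-varnorm}) is needed. In fact the paper exploits the elementary domination $\|\cdot\|_{\ell^2} \leq \|\cdot\|_{V^2}$ of the square-function norm by the 2-variation norm, which means the short-jump Bessel inequality closes using only the much more elementary Proposition~\ref{bourgainrmprop} (a Rademacher--Menshov/logarithmic-loss bound) in place of the full Theorem~\ref{t.bourgain-varnorm}. Similarly, the tree/size estimate replaces L\'epingle by the classical Littlewood--Paley square-function bound. So the short-jump argument is genuinely \emph{weaker-ingredient}, not just cosmetically different. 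Your proposal does not seem to notice this $\ell^2 \le V^2$ shortcut; if you tried to literally re-run the $V^r$ Bessel inequality with an $\ell^2_n$ coefficient sequence $d_n$ attached to $\phi_{P,3}$ you would still get the right answer, but you would be working harder than necessary and obscuring why the $r$-dependence disappears from the conclusion of the short-jump estimate.
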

Theorem~\ref{t.long-jump} immediately takes care of the long-jump component of the variation norm $\|B_t[f,g]\|_{V^r_t}$. Below we deduce the desired estimate for the short jump component from Theorem~\ref{t.short-jump}.

We first note that if $a(t)$ is differentiable then using $\|.\|_{\widetilde V^2}\le \|.\|_{\widetilde V^1}$ we obtain
\begin{eqnarray*}
\|a(t)\|_{\widetilde V^2_t[2^n,2^{n+1}]} \lesssim  \|a'(t)\|_{L^1_t[2^n,2^{n+1}]} 
&=& 2^n \|a'(2^n s)\|_{L^1_s[1,2]} \ \ .
\end{eqnarray*}
Therefore using Minkowski's inequality we have
\begin{eqnarray*}
\|a(t)\|_{S_t} &\lesssim& \|(\sum_{n\in\Z} 2^{2n} |a'(2^n s)|^2)^{1/2}\|_{L^1_s[1,2]}\\
&\lesssim& \|(\sum_{n\in\Z} |(2^{n}s) a'(2^n s)|^2)^{1/2}\|_{L^1_s[1,2]}
\end{eqnarray*}

We plan to apply the estimate to $a(t)=B_t[f_1, f_2](x)$ where $x$ is fixed. Let $h(y) = -(K(y) + y K'(y))$, or equivalently $\widehat h(\xi) =  \xi\frac{d}{d\xi} \widehat K(\xi)$.
 Let $H_t$ be the bilinear singular integral with kernel $t^{-1} h(t^{-1}y)$. Then
\begin{eqnarray*}
t\frac {d}{dt} B_t[f_1, f_2](x) &=& H_t[f_1, f_2](x) \ \ , 
\end{eqnarray*}
therefore
\begin{eqnarray*}
\|B_t[f_1, f_2](x)\|_{S_t} &\lesssim& \|(\sum_{n\in \Z} |H_{2^n s}[f_1, f_2](x)|^2)^{1/2}\|_{L^1_s[1,2]}
\end{eqnarray*}
We may write $H_{2^n s}[f_1,f_2](x) =  \int f_1(x+y) f_2(x-y) 2^{-n} K_s(2^{-n} y) dy$ with $K_s(y):=s^{-1} h(s^{-1} y)$, and it is not hard to see that $K_s$ satisfies \eqref{e.K-condition1}, \eqref{e.K-condition2}, \eqref{e.Ks-condition} uniformly in $s \in[1,2]$. Thus, the desired estimates for the short jump component of $B_t[f_1,f_2]$ follows from Theorem~\ref{t.short-jump}.

In the rest of the paper we prove Theorem~\ref{t.long-jump} and Theorem~\ref{t.short-jump}. We will use the restricted weak-type interpolation approach of \cite{mtt2002}, which will be discussed in the next section.

\section{Linearization and interpolation}

\subsection{Linearization}\label{s.lindual}
For each $x$ consider a measurable function $L:\R \to \Z_+$ the set of positive integers, and two sequences of measurable functions: a non-decreasing integer valued sequence $(k_n(x))_{n=0}^{L(x)}$ and a sequence $(a_n(x))_{n=1}^{L(x)}$ such that $\sum_{n\ge 0} |a_n(x)|^{r'} \le 1$. Then an appropriate choice of $L$ and such sequences guarantees that
\begin{eqnarray*}
\|B_{2^k} (f_1,f_2)(x)\|_{V^r_k(\Z)} &\le& 2 \sum_{n=1}^{L(x)} \Big(B_{2^{k_n}}[f_1,f_2](x) - B_{2^{k_{n-1}}}[f_1,f_2](x)\Big)a_n(x) \ \ .   
\end{eqnarray*}

Similarly, for each $s\in [1,2]$ we may find a sequence of measurable functions $(d_n(s,x))_{n=-\infty}^\infty$ such that $\sum_{n} |d_n(s,x)|^2 \le 1$, and
\begin{eqnarray*} 
&&(\sum_{n\in \Z} |\int f_1(x +y) f_2(x - y) 2^{n} K_s(2^{n}y)dy|^2)^{1/2} \\
&\le& 2 \sum_{n \in \Z} \int f_1(x +y) f_2(x - y) 2^{n} K_s(2^{n}y)dy\ d_n(s,x)
\end{eqnarray*}

The desired estimates in Theorem~\ref{t.long-jump} and Theorem~\ref{t.short-jump} follow from certain restricted-weak type estimates for the following tri-linear forms, which we will discuss in the next section.
\begin{eqnarray*}
\Lambda_{long}(f_1,f_2,f_3) &=& \<\sum_{n=1}^{L} \Big(B_{2^{k_n}}[f_1,f_2] - B_{2^{k_{n-1}}}[f_1,f_2]\Big)a_n, f_3\>\\
\Lambda_{short}(f_1,f_2,f_3) &=& \int_1^2 \Lambda_{short,s}(f_1,f_2,f_3) ds\\
\Lambda_{short,s}(f_1,f_2,f_3) &=&   \<\sum_{n \in \Z} \int f_1(\cdot +y) f_2(\cdot - y) 2^{-n} K_s(2^{-n}y)dy d_n(s,\cdot), f_3\>  \  \ .
\end{eqnarray*}

\subsection{Restricted weak-type interpolation}

For any $G\subset \R$ with finite Lebesgue  measure, we say that $H\subset G$ is a minor subset if $|H|\le |G|/2$. 

Let  $\alpha=(\alpha_1,\alpha_2,\alpha_3)\in \R^3$ be such that $\alpha_1+\alpha_2+\alpha_3=1$ and at most one $\alpha_j$ could be negative. We say that a tri-linear functional $\Lambda(f_1,f_2,f_3)$ satisfies  restricted weak-type estimates with exponents $\alpha$ if the following holds.

\underline{Case 1:  $\min(\alpha_1,\alpha_2,\alpha_3) \ge 0$.} Then we require existence of $j_0 \in \{1,2,3\}$ with the following property:  for every triple $(F_1,F_2,F_3)$ of finite Lebesgue measurable subsets of $\R$ we could find $B\subset F_{j_0}$ minor subset such that
\begin{eqnarray}\label{e.trilinear-alpha}\Lambda(f_1,f_2,f_3) &\lesssim& |F_1|^{\alpha_1} |F_2|^{\alpha_2} |F_3|^{\alpha_3}
\end{eqnarray}
for any $f_1,f_2,f_3$ with the following property:  $|f_j|\le 1_{F_j}$ if  $j\ne j_0$, $|f_{j_0}| \le 1_{F_{j_0}-B}$.

\underline{Case 2: $\min(\alpha_1,\alpha_2,\alpha_3)<0$.} Let $k$ be such that $\alpha_k<0$. By assumptions on $\alpha$ the other $\alpha_j$'s are nonnegative. Then we require the above property with $j_0=k$.

Let $A$ be the hexagon on the plane $L=\{\alpha_1+\alpha_2+\alpha_3=1\}$ with vertices
\begin{eqnarray}\label{e.A-def} 
A_1(-\frac 1 2, \frac 1 2, 1) , & A_2(\frac 1 2, - \frac 1 2, 1), & A_3(\frac 1 2, 1, -\frac 1 2), \\
\nonumber A_4(-\frac 1 2, 1, \frac 1 2), & A_5(1,-\frac 1 2, \frac 1 2), & A_6(1,\frac 1 2, -\frac 1 2)
\end{eqnarray}
By the interpolation argument of \cite{mtt2002}, to show Theorem~\ref{t.long-jump} and Theorem~\ref{t.short-jump}   it suffices to prove that in any given neighborhood (in the plane $L$) of any vertex of $A$ we could find $\alpha$ such that $\Lambda_{long}(f_1,f_2,f_3)$ and $ \Lambda_{short}(f_1,f_2,f_3)$ satisfy restricted weak-type estimates with exponents $\alpha$. (Note that when $\alpha$ is near a vertex of $A$ it is automatic that at most one coordinate of $\alpha$ could be negative.)

It will be clear from our proof (of the restricted weak-type estimates for all involved trilinear forms) that the index $j_0$ and the exceptional set $B$ depend only on $\alpha$ and $F_1,F_2,F_3$.  Also, in the proof the choice of $\alpha$ (inside any small neighborhoods of  any given vertices of $H$) will not depend on the underlying trilinear form.

Therefore, \emph{a posteriori}, to show the restricted weak-type estimates for $\Lambda_{short}$ it suffices to obtain the same estimate for $\Lambda_{short,s}$ (with the same set of exponents), provided that the implicit constants are uniform over $s\in [1,2]$. This uniformity in turn is a consequent of the fact that  the implicit constants in the assumptions for $K_s$  are uniform over $s\in [1,2]$.

Similarly, in the proof for $\Lambda_{long}$ we'll decompose it into a weighted sum of simpler trilinear forms, and it suffices to obtain the restricted weak-type estimates for each of the new forms (with the same set of exponents) provided that the implicit constants are uniform.

\section{Terminology of tiles and trees}\label{s.terms}

In this section we recall some terminologies from \cite{lacey2000,dtt2008,demeter2007} that will be used in the proof. By a $\emph{grid}$ we mean a collection of intervals whose lengths are integral powers of $2$  such that if $I,I'$ are two intersecting elements   then  $I \subset I'$ or $I' \subset I$. In addition to the standard grid $\calG_0$ of dyadic intervals $2^i[m,m+1)$, we will use the grids
\begin{eqnarray*}
\calG_{\ell,t} &=& \left\{ \left[2^i(m + \frac{\ell}{5}),2^i(m + \frac{\ell}{5} + 1)\right) : i = t \ (mod\ 4), m \in \BBZ \right\}
\end{eqnarray*}
where $\ell$ and $t$ are integers, clearly $G_{\ell,t}$ depends only on $\ell\ (mod \ 5)$ and $t\ (mod \ 4)$.  We will also make use of the grids 
\begin{eqnarray*}
\calG_{1} &=& \left\{ \left[2^i(m + \frac{(-1)^{i}}{3}),2^i(m + \frac{(-1)^i}{3} + 1)\right) : m \in \BBZ \right\} \\
\calG_{2} &=& \left\{ \left[2^i(m - \frac{(-1)^i}{3}),2^i(m - \frac{(-1)^i}{3} + 1)\right) :  m \in \BBZ \right\}.
\end{eqnarray*}

It is clear that for every (not necessarily dyadic) interval $I$ there is a $d \in \{0,1,2\}$ and a $J \in \calG_d$ such that $I \subset J$ and $J \subset 3I$; we then say that $I$ is \emph{d-regular}.

	A \emph{tile} $p$ is  a rectangle $I_p \times \omega_p \subset \rea^2$ of area $1$ such that $I_p$ is dyadic. A \emph{tri-tile}  $P$ will consist of a quadruplet of intervals $(I_P, \omega_{P_1},\omega_{P_2},\omega_{P_3})$ where $I_P$ is dyadic and $|I_P| |\omega_{P_i}|=1$ for each $i$. Associated to the tri-tile $P$ are the three tiles $P_i = I_P \times \omega_{P_i}$  which justify the notation that is implicit in the previous sentence. 
	
For each quadruplet  of integers  $\nu = (j_1,j_2,e,i)$ such that $0\le j_1,j_2\le 4$ and $498\le |e| \le 4002$ and $0\le i < 4000$, consider the collection  of tri-tiles \begin{eqnarray}
\nonumber\P_{\nu} \quad =\quad
\{ \Big( &&\big[2^{-i'}m,2^{-i'}(m+1)\big),\\
\nonumber &&\big[2^{i'}(n + \frac{j_1}{5}),  2^{i'}(n + \frac{j_1}{5} + 1)\big),\\
\label{e.Pnu-def} &&\big[2^{i'}(n + e + \frac{j_2}{5}), 2^{i'}(n + e + \frac{j_2}{5} + 1)\big),\\
\nonumber &&\big[2^{i'}(2n + e + \frac{j_1 + j_2}{5}+1), 2^{i'}(2n + e + \frac{j_1 + j_2}{5} + 2)\big)    \Big)   \\
\nonumber &&: m,n, i' \in \BBZ, i' = i \ (mod\ 4000)
\}
\end{eqnarray}
Above, we clearly have  $\omega_{P_1} \in \calG_{j_1,i}$, $\omega_{P_2} \in \calG_{j_2,i}$, and $\omega_{P_3} \in \calG_{j_1 + j_2,i}$.

Fixing $\nu$ for the remainder of the section  (some definitions below depend on $\nu$), we now recall, from \cite{demeter2007} (cf. \cite{mtt2002}), some notions of order for tiles. 
\begin{definition}
For two tiles $p,p'$  we write
\begin{itemize}
\item $p' < p$ if $I_{p'} \subsetneq I_p$ and $3\omega_p \subsetneq 3\omega_{p'}$
\item $p' \leq p$ if $p' < p$ or $p'=p$
\item $p' \lesssim p$ if $I_{p'} \subset I_p$ and $\omega_p \subset 10|e|\omega_{p'}$
\item $p' \lesssim' p$ if $p' \lesssim p$ and $10 \omega_{p'} \cap 10 \omega_p = \emptyset$
\end{itemize}
\end{definition}
It is not hard to see that if $P,P' \in \P_{\nu}$ are  two tri-tiles with $P_i' < P_i$ for some $i \in \{1,2,3\}$ then $P_{j}' \lesssim'P_j$ for each $j \in \{1,2,3\} \setminus \{i\}.$


The ordering above gives rise to the concept of a tree, which we recall below:

\begin{definition}\label{d.tree}
Let $i \in \{1,2,3\}$. An i-overlapping tree is a collection of tri-tiles $T \subset \P_{\nu}$ together with a top tri-tile $P_T \in \P_{\nu}$ which satisfies 
\begin{eqnarray*}
P_i < (P_T)_i \text{\ for all\ }P \in T \setminus \{P_T\}.
\end{eqnarray*}
We say that $T$ is a tree if it is an $i$-overlapping tree for some $i \in \{1,2,3\}$. We say that $T$ is a tree with top if $P_T\in T$.

A tree $T$ is called $j$-lacunary if 
\begin{eqnarray*}
P_{j} \lesssim' (P_{T})_j \text{\ for\ all\ }\ P \in T \setminus \{P_T\}. 
\end{eqnarray*} 
\end{definition}

It follows that a tree is $j$-lacunary if and only if it is $i$-overlapping for some $i \in \{1,2,3\} \setminus \{j\}$, furthermore for each $P \in T$ we have $\sgn(c(\omega_{P_j}) - c(\omega_{(P_T)_j})) = \epsilon_{i,j}$ where we define $\epsilon_{i,j} = \sgn(e)$ for $(i,j) \in \{(1,2), (1,3), (3,2)\}$ and $\epsilon_{i,j} =-\sgn(e)$ for $(i,j) \in \{(2,1), (3,1), (2,3) \}$. We will abbreviate $I_T := I_{P_T}.$

\begin{definition}\label{d.strong-j-disjoint}
We will say that a collection of trees $\T$ is \emph{strongly $j$-disjoint} for some $j \in \{1,2,3\}$ if
\begin{enumerate}
\item Each $T \in \T$ is $j$-lacunary
\item If $T,T' \in \T$ and $T \neq T'$ then $T \cap T' = \emptyset$ \label{firststrongdisjoint}
\item If $T,T' \in \T$, $T \neq T'$, $P \in T$, $P' \in T'$, and $\omega_{P_j} \subsetneq \omega_{P'_j}$ then $I_{P'} \cap I_{T} = \emptyset$ \label{secondstrongdisjoint}
\item  If $T,T' \in \T$, $T \neq T'$, and $P' \in T'$ then $P'_j \not\leq (P_{T})_j$ \label{thirdstrongdisjoint}
\end{enumerate}
\end{definition}
Note that, due to our choice of order on tiles, the condition (\ref{thirdstrongdisjoint}) above is somewhat nonstandard (in comparison with, say, \cite{demeter2007}).  Also note that conditions \eqref{firststrongdisjoint} and \eqref{secondstrongdisjoint} imply that if $T,T' \in \T$, $T \neq T'$, $P \in T$, and $P' \in T'$ then $P_j \cap P'_j = \emptyset.$

\section{Discretization}
\label{s.discretization}
In this section, we  discuss discretization, i.e. wavelet representation, for $\Lambda_{long}$ and $\Lambda_{short,s}$. We'll largely follow \cite{demeter2007}. We'll discuss in details the process for $\Lambda_{long}$, the discretization for $\Lambda_{short,s}$ will be similar and discussed at the end of the section.

\subsection{Cancellation between dilates}

The point of conditions \eqref{e.K-condition1},  \eqref{e.K-condition2} is that they allow one to decompose $K$ into much simpler kernels:
\begin{lemma}\label{l.Kdecomp} If $K$ satisfies \eqref{e.K-condition1} and \eqref{e.K-condition2} then
\begin{eqnarray}\label{e.Kdecomp}
\widehat{K}(\xi) &=& \sum_{j\in \Z}^\infty c_j \widehat{K}_j(2^j \xi) \ \ , \ \ \forall \ \xi \ne 0 \ \ ,
\end{eqnarray}
where $\{c_j\}_j \in \ell^1(\BBZ)$ and each $\widehat K_j$ could be furthermore written as  the sum of dilates of a single generating function  $\widehat{K}_j(\cdot) = \sum_{\ell \geq 0} \widehat{\phi}_j(2^\ell\cdot)$,
where $supp(\widehat \phi_j) \subset \{500 \leq |\xi| \leq 4000 \}$, 
and it holds uniformly in $j$ that
\begin{eqnarray} \label{e.phi}
|\phi_j^{(n)}(x)| &\le& C_{m,n} (1 + |x|)^{-m} 
\end{eqnarray}
for every $m,n \geq 0$. If $K$ satisfies \eqref{e.K-condition1} and \eqref{e.K-condition2} up to some high order then \eqref{e.phi} holds for $m,n\le M$ with $M$ comparably large.
\end{lemma}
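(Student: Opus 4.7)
The plan is to realize \eqref{e.Kdecomp} via a Littlewood-Paley analysis of $\widehat{K}$ on dyadic annuli, combined with a Taylor extraction at $\xi=0$ that repairs the low-frequency regime, where the decay from \eqref{e.K-condition1} alone is too weak to produce $\ell^1$ coefficients. Fix a smooth radial bump $\widehat{\Phi}$ with $\widehat{\Phi}\equiv 1$ near the origin and supported sufficiently close to $0$ so that $\widehat{\Psi}(\xi) := \widehat{\Phi}(\xi) - \widehat{\Phi}(2\xi)$ is supported in $\{1000\leq|\xi|\leq 4000\}$ and $\sum_{k\in\BBZ}\widehat{\Psi}(2^k\xi)=1$ for $\xi\neq 0$; this also gives $\widehat{\Phi}(\eta)=\sum_{\ell\geq 0}\widehat{\Psi}(2^\ell\eta)$ away from the origin, via telescoping.

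First split $\widehat{K}(\xi) = \widehat{K}(\xi)[1-\widehat{\Phi}(\xi)] + \widehat{K}(\xi)\widehat{\Phi}(\xi)$. The high-frequency half equals $\sum_{j\leq 0}\widehat{K}(\xi)\widehat{\Psi}(2^j\xi)$; after substituting $\eta = 2^j\xi$ each summand takes the form $c_j\widehat{K}_j(2^j\xi)$ with $\widehat{K}_j(\eta) := c_j^{-1}\widehat{K}(2^{-j}\eta)\widehat{\Psi}(\eta)$, where the bound $|\widehat{K}(\xi)|\lesssim 1/|\xi|$ from \eqref{e.K-condition1} forces $c_j\sim 2^j$, which is summable over $j\leq 0$, and the derivative bounds \eqref{e.K-condition2} give Schwartz control on $\widehat{K}_j$ uniform in $j$. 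The low-frequency half $\widehat{K}\widehat{\Phi}$ is the main obstacle, since $|\widehat{K}|\lesssim 1$ provides no decay and a naive Littlewood-Paley split would yield $c_j\sim 1$ for $j\geq 1$. I would remedy this by writing
\begin{eqnarray*}
\widehat{K}(\xi)\widehat{\Phi}(\xi) = \widehat{K}(0)\widehat{\Phi}(\xi) + [\widehat{K}(\xi)-\widehat{K}(0)]\widehat{\Phi}(\xi).
\end{eqnarray*}
The constant-times-bump term already fits the required form as a single summand with $c_0 = \widehat{K}(0)$, $\widehat{K}_0 = \widehat{\Phi}$ and generating function $\widehat{\phi}_0 = \widehat{\Psi}$. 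For the remainder, the mean-value estimate $|\widehat{K}(\xi)-\widehat{K}(0)|\lesssim|\xi|$ (supplied by $\|\widehat{K}'\|_\infty\lesssim 1$ from \eqref{e.K-condition2} at $n=1$) furnishes the missing factor $|\xi|\sim 2^{-j}$ on each dyadic annular piece at scale $2^{-j}$, producing $c_j\sim 2^{-j}$ summable over $j\geq 1$ after rescaling, with $\widehat{K}_j$ uniformly Schwartz via \eqref{e.K-condition2}.

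For each annular $\widehat{K}_j$ produced (i.e.\ for $j\neq 0$), the required form $\widehat{K}_j=\sum_{\ell\geq 0}\widehat{\phi}_j(2^\ell\cdot)$ follows by a second telescoping: set $\widehat{\phi}_j := \widehat{K}_j - \widehat{K}_j(2\cdot)$, so that $\sum_{\ell=0}^L\widehat{\phi}_j(2^\ell\eta) = \widehat{K}_j(\eta) - \widehat{K}_j(2^{L+1}\eta)$ converges to $\widehat{K}_j(\eta)$ as $L\to\infty$ since $\widehat{K}_j$ vanishes at infinity. Because $\widehat{K}_j$ inherits the support of $\widehat{\Psi}$ in $\{1000\leq|\eta|\leq 4000\}$, its dilate $\widehat{K}_j(2\cdot)$ sits in $\{500\leq|\eta|\leq 2000\}$, so $\widehat{\phi}_j$ is supported in $\{500\leq|\eta|\leq 4000\}$ as required. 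The uniform Schwartz bounds \eqref{e.phi} on $\phi_j$ follow from the uniform Schwartz bounds on $\widehat{K}_j$ transferred to $\widehat{\phi}_j$, which in turn come from \eqref{e.K-condition1} and \eqref{e.K-condition2} applied after rescaling; when $K$ only satisfies the hypotheses up to finite order $n_0$, the same argument produces \eqref{e.phi} with $m,n\leq M$ for some $M$ comparable to $n_0$. The delicate step is the Taylor extraction at $\xi=0$, whose necessity is precisely what distinguishes this decomposition from a routine dyadic expansion; if finer behaviour at the origin were ever needed, one could iterate the extraction to higher order, but the first-order extraction already suffices here.
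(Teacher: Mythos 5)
Your decomposition closely parallels the paper's argument, and the Littlewood--Paley plus telescoping machinery, the $c_j$ bookkeeping, and the uniform Schwartz control via \eqref{e.K-condition2} are all in order. The gap is at the Taylor extraction at the origin. You write $\widehat{K}\widehat{\Phi} = \widehat{K}(0)\widehat{\Phi} + [\widehat{K}-\widehat{K}(0)]\widehat{\Phi}$ and invoke $|\widehat{K}(\xi)-\widehat{K}(0)|\lesssim|\xi|$ from $\|\widehat{K}'\|_\infty\lesssim 1$. But \eqref{e.K-condition1} and \eqref{e.K-condition2} are imposed only for $\xi\neq 0$ and do \emph{not} imply that $\widehat{K}$ is continuous at the origin: the bound $|\widehat{K}'(\xi)|\lesssim 1$ on $0<|\xi|\le 1$ gives only the existence of the one-sided limits $\widehat{K}(0+)$ and $\widehat{K}(0-)$, which may differ (this happens for Hilbert-transform-type kernels allowed by the hypotheses). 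When $\widehat{K}(0+)\neq\widehat{K}(0-)$ there is no single scalar $\widehat{K}(0)$ to subtract; for any constant $c$, the remainder $[\widehat{K}-c]\widehat{\Phi}$ is bounded away from zero near $0$ on at least one side, so the resulting annular coefficients $c_j$ are $\Theta(1)$ for $j\ge 1$ and fail to be in $\ell^1$.

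The paper handles this by extracting a \emph{piecewise-constant} profile: a compactly supported $\widehat{\varphi}$, smooth off the origin, equal to $\widehat{K}(0+)$ on $[0,1000]$ and to $\widehat{K}(0-)$ on $[-1000,0)$. One then applies the annular rescaling argument (your high/low split, in effect) to $\widehat{K}-\widehat{\varphi}$, which now does satisfy $|\widehat{K}(\xi)-\widehat{\varphi}(\xi)|\lesssim|\xi|$ on both sides; and $\widehat{\varphi}$ itself telescopes directly via $\widehat{\phi}:=\widehat{\varphi}-\widehat{\varphi}(2\cdot)$, which is supported in $\{500\le|\xi|\le 2000\}$. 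Your closing remark that one ``could iterate the extraction to higher order'' misdiagnoses the obstruction: the issue is not the order of vanishing at $0$ but the two-sidedness of the limit. With the scalar $\widehat{K}(0)\widehat{\Phi}$ replaced by this two-sided $\widehat{\varphi}$, the rest of your proof, including the finite-order variant, goes through and essentially reproduces the paper's argument.
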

The utility of this approach lies in the following cancellation between dilates of $K_j$: for every integers $k_1\le k_2$ we have $\widehat{K}_j(2^{k_2}\xi)  - \widehat K_j(2^{k_1}\xi) = \sum_{k_1 \le \ell < k_2} \widehat{\phi_j}(2^{\ell}\xi)$, which turns out to be convenient for reducing $\Lambda_{long}$ to  wavelet operators. Namely, by pulling out the sum in $j$, we thus see that the consideration of $\Lambda_{long}$ reduces to considerations of $\Lambda_j$, defined by:
\begin{eqnarray*}
\Lambda_{j}(f_1,f_2,f_3) &:=& \<\sum_{n=1}^L \Big(\sum_{k_{n-1} \le \ell  < k_n} B_{\phi_j,\ell}[f_1,f_2]\Big) a_n, f_3\> \ \ , \\
B_{\phi_j,\ell}[f_1,f_2](x) &:=& \int f_1(x + y) f_2(x - y) 2^{-\ell}\phi_j(2^{-\ell}y)\ dy \ \ ,
\end{eqnarray*}
and $B_{\phi_j,\ell}$ could be  decomposed into a finite number of discrete wavelet operators at scale $\ell$; this decomposition will be discussed in Section~\ref{s.wavelet-rep}.

\proof[Proof of Lemma~\ref{l.Kdecomp}] Observing that  the given assumptions on $\widehat K$ implies the existence of $\widehat K(0+)$ and $\widehat K(0-)$. We consider two cases. We'll only consider the setting when \eqref{e.K-condition1} and \eqref{e.K-condition2} hold for all orders; the finite order case could be achieved by the same argument.

\underline{Case 1:} Suppose that $\widehat K(0+)=\widehat K(0-)=0$, then using the given assumptions on $K$ it follows that for every $n\ge 0$ it holds that
\begin{eqnarray*}
|\frac {d^n}{d\xi^n}\widehat K(\xi)|
&\lesssim& |\xi|^{-n}\min(|\xi|, 1/|\xi|)
\end{eqnarray*}
(the improvement is at $n=0$). Let  $\eta$ be a nonnegative $C^\infty$ bump function on $\{1000\le |\xi|\le 2000\}$ such that $\sum_{j} \eta(2^j \xi) = 1$ for every $\xi \ne 0$. Let
\begin{eqnarray*}
\widehat K_j(\xi) &=& 2^{|j|} \widehat K(2^{-j}\xi)\eta(\xi) \ \ ,
\end{eqnarray*}
which is supported in $\{1000 \le |\xi| \le 2000\}$, it is routine to check that \eqref{e.Kdecomp} holds with $c_j=2^{-|j|}$, and $\frac{d^n}{d\xi^n}\widehat K_j(\xi) \lesssim_n 1$ for all $n\ge 0$. Let $\widehat \phi_j(\xi)=\widehat K_j(\xi) - \widehat K_j(2\xi)$, then $\widehat \phi_j$ has the desired properties.

\underline{Case 2:} $(\widehat K(0+), \widehat K(0-)) \ne (0,0)$. Let $\varphi$ be such that $\widehat \varphi$ is supported on $[-2000,2000]$ and is in $C^\infty(\R - \{0\})$, and $\widehat \varphi(\xi)=\widehat K(0+)$ for $\xi\in [0,1000]$ and $\varphi(\xi)=\widehat K(0-)$ for $\xi \in [-1000,0)$. Then by writing $\widehat K=\widehat \phi + (\widehat K-\widehat \phi)$ and applying the  analysis in Case 1 for $K-\phi$, we are left with $\widehat \varphi$, for which we will decompose directly into the sum of dilates of a single generating function. Namely, let $\widehat \phi(\xi)=\widehat \varphi(\xi)-\widehat \varphi(2\xi)$, it is clear that $\widehat \phi$ satisfies the desired properties.
\endproof

\subsection{Wave packet representation}\label{s.wavelet-rep} 
Below we  will decompose $B_{\phi_j,\ell}[f_1,f_2]$ into wavelet sums. For convenience of notation, we will suppress the variable $j$, namely below $\phi=\phi_j$ whose Fourier transform is supported on $\{500\le |\xi|\le 2000\}$ and $\phi$ satisfies  \eqref{e.phi} up to sufficiently high order.

\begin{definition}\label{d.wavepacket} We say that $\psi$ is an $L^2$ wave packet of order $M$ adapted to a tile $p=I\times \omega$ if $\widehat \psi$ is supported in $\omega$ and  the following estimate holds for all $0\le m,n\le M$:
\begin{eqnarray}  \label{e.wavepacketdecay}
\frac{d^n}{dx^n}\Big(\e^{- 2 \pi \i c(\omega) x} \psi(x)\Big)   &\le&  C_{M,N,n,m} \frac{1}{|I|^{n+\frac 1 2}}  \widetilde \chi_{I}(x)^m
\end{eqnarray}
\end{definition}

\begin{lemma}\label{l.wavelet-rep} Given any $M,N>0$, if  $\phi$ satisfies  \eqref{e.phi} up to sufficiently high order and $\widehat \phi$ is supported in $\{500\le |\xi|\le 4000\}$ then for each $\ell\in \Z$
$B_{\phi,\ell}[f_1,f_2](x)$ can be written as the sum over   $\nu=(j_1,j_2,e)$, $0\le j_1,j_2\le 4$ and $498\le |e| \le 4002$ integers, of
\begin{eqnarray} \label{e.Bphiell-decomp}
\sum_{j\in \Z} 2^{-N|j|} \sum_{P \in \P_\nu: |\omega| = 2^{-\ell}} |I_P|^{-1/2}\<f_1, \psi_{j,P,1}\>\<f_2, \psi_{j,P,2}\> \psi_{j,P,3}(x) \ \ ,
\end{eqnarray}
where (uniform over tri-tiles $P\in \P_\nu$, $i=1,2,3$, and $j\ge 0$) $\psi_{j,P,i}$ is an $L^2$ wave packet adapted to $P_i$ of order $M$ (the  constants in \eqref{e.wavepacketdecay} may depend on $M,N,\nu$). 
\end{lemma}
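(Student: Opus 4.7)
The plan is to apply Fourier inversion to convert $B_{\phi,\ell}$ into a trilinear frequency integral and then perform a smooth partition of unity adapted to the grids defining $\P_\nu$. Writing $f_1,f_2$ via Fourier inversion and integrating in $y$ one obtains
\begin{eqnarray*}
B_{\phi,\ell}[f_1,f_2](x) &=& \int\!\!\int \widehat f_1(\xi_1)\widehat f_2(\xi_2)\, \e^{2\pi\i(\xi_1+\xi_2)x}\, \widehat\phi\bigl(2^\ell(\xi_2-\xi_1)\bigr)\, d\xi_1\, d\xi_2.
\end{eqnarray*}
The annular support $\supp\widehat\phi \subset \{500\le|\xi|\le 4000\}$ forces $\xi_2-\xi_1$ into an annulus at scale $2^{-\ell}$, which is the geometric regime in which the tri-tile collections $\P_\nu$ live.

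Next I introduce a smooth partition of unity at scale $2^{-\ell}$ in each of $\xi_1$ and $\xi_2$, using bumps shifted to align with the grids $\calG_{j_1,i}$ and $\calG_{j_2,i}$. After setting $\omega_{P_1}$ and $\omega_{P_2}$ to be the grid intervals produced by the partition, and letting $e$ denote the integer separation between their grid indices, the annular constraint forces $498\le|e|\le 4002$, matching the indexing in \eqref{e.Pnu-def}. The range of $\xi_1+\xi_2$ on the resulting cell lies in the union of two consecutive intervals of $\calG_{j_1+j_2,i}$, and selecting the appropriate one (via a further partition in the transverse direction) produces $\omega_{P_3}$ as prescribed. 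A parallel partition of unity in the spatial variable $x$ at scale $2^{\ell}$ organizes the outputs into the dyadic intervals $I_P$.

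The key technical step, and the source of the $2^{-N|j|}$-weighted sum, is to convert the restricted multiplier $\widehat\phi(2^\ell(\xi_2-\xi_1))$ on each $\omega_{P_1}\times\omega_{P_2}$ cell into a genuine tensor product of three wave packets, one for each $\xi_i$. I would accomplish this by expanding the restricted, rescaled multiplier in a Fourier series on a unit cube in the $2^\ell\xi$-variables containing the cell. Because $\widehat\phi$ is smooth to high order (which by \eqref{e.phi} transfers to $\phi$ and back), the Fourier coefficients decay like $2^{-N|j|}$ in the mode index $j$. Each mode contributes an $L^2$-normalized wave packet $\psi_{j,P,3}$, modulated at frequency $c(\omega_{P_3})$ and with inverse Fourier transform spatially translated by $\sim j|I_P|$; the bounds \eqref{e.wavepacketdecay} for each $\psi_{j,P,i}$ then follow from the smoothness of the partition bumps together with integration by parts against the decay of $\phi$.

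The main obstacle I anticipate is purely combinatorial: verifying that the grid offsets $j_1,j_2$, the separation $e$, and the residue class $i \bmod 4000$ arrange to produce exactly the enumeration \eqref{e.Pnu-def} of $\P_\nu$, and that each of the three output factors has its frequency support inside the correct tile component $P_i$ (in particular $\omega_{P_3}$, which requires using the slight overlap between the candidate intervals of $\calG_{j_1+j_2,i}$). Once this bookkeeping is in place, convergence of the sums and the wave-packet estimates \eqref{e.wavepacketdecay} are routine consequences of integration by parts and \eqref{e.phi}.
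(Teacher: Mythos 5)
Your proposal takes a genuinely different route from the paper, but there is a gap at the crucial spatial discretization step.

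First, the difference. The paper never expands the multiplier $\widehat\phi(2^\ell(\xi_2-\xi_1))$; instead it expands each of $f_1$ and $f_2$ in a discrete tight Gabor frame (the ``Fourier sampling theorem'' with window $\widehat\psi$ satisfying $\sum_j|\widehat\psi(\cdot - j/5)|^2 = 1$), applies $B_{\phi,\ell}$ to pairs of wave packets $\psi_{I_1,\omega_1},\psi_{I_2,\omega_2}$, and checks directly that the output is a shifted wave packet with rapid decay in the offset $|c(I_1)-c(I_2)|/2^\ell$; that offset is what the index $j$ enumerates, and the weight $2^{-N|j|}$ is harvested from that decay. Your approach instead keeps the inputs continuous, passes to the bilinear symbol, and proposes a Fourier series expansion of the restricted symbol to tensorize it, harvesting $2^{-N|j|}$ from the smoothness of $\widehat\phi$. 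These are dual mechanisms and both are legitimate; the paper's route avoids the Fourier series of the multiplier entirely by working on the physical side from the outset.

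The gap: after your frequency partition of unity and Fourier series expansion, each term has the form $c_j\,[m_{\omega_1}^\vee *f_1](x \pm j 2^\ell)\,[m_{\omega_2}^\vee*f_2](x \mp j 2^\ell)$ --- a \emph{product of two continuous functions of $x$}, not yet the discrete form $|I_P|^{-1/2}\langle f_1,\psi_{j,P,1}\rangle\langle f_2,\psi_{j,P,2}\rangle\psi_{j,P,3}(x)$. A ``partition of unity in the spatial variable $x$ at scale $2^\ell$'' cannot close this gap: writing $1 = \sum_I\chi_I(x)$ only reorganizes the output in $x$ and does not produce the scalar pairings $\langle f_i,\psi_{j,P,i}\rangle$ that the lemma requires. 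What you actually need is the Gabor/Fourier-sampling identity the paper invokes --- a genuine tight-frame expansion that discretizes \emph{both} time and frequency simultaneously for each $f_i$. Once that identity is in place, you would also have to reconcile two sources of shifts (the Fourier mode $j$ from the symbol and the offset $c(I_1)-c(I_2)$ from the frame), which the paper avoids by only ever having the latter. So while the Fourier-series idea is salvageable, as written the proposal does not reach the required discrete wave-packet form, and the paragraph about the spatial side is the specific point of failure.
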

\proof We consider $M=\infty$ below, the finite case is similar. Recall that the Fourier transform $\mathcal F$ is defined by \eqref{e.fourier-transform}. We first make several remarks about $B_{\phi,\ell}$. Suppose that the supports of $\widehat \psi_1$ and $\widehat \psi_2$ are contained in intervals $\omega_1,\omega_2$  respectively. Then the identity
\begin{eqnarray*} 
\mathcal F\left[B_{\phi,\ell}[\psi_1,\psi_2]\right](\xi) &=&  \int_{\R} \widehat{\psi}_1(\xi - \eta) \widehat{\psi}_2( \eta) \widehat{\phi}(2^l(2 \eta - \xi))\ d\eta 
\end{eqnarray*}
 gives rise to two observations. First, if $B_{\phi,\ell}(\psi_1,\psi_2)$ does not vanish then 
\begin{eqnarray} \label{diffsupport}
(\omega_1 - \omega_2) \cap \{500\cdot 2^{-\ell} &\le& |\xi| \leq 4000 \cdot 2^{-\ell}\} \neq \emptyset.
\end{eqnarray}
Second, the support of Fourier transform of $B_{\phi,\ell}(\psi_1,\psi_2)$   is contained in 
\begin{eqnarray} \label{sumsupport}
\omega_1 + \omega_2.
\end{eqnarray}
Now, turning to spatial localization,  we have
\begin{eqnarray} \label{modulationofbilinear}
\e^{- 2 \pi \i 2 \xi x} B_{\phi,\ell}[\psi_1,\psi_2](x)  &=&  B_{\phi,\ell}[ \e^{- 2 \pi \i \xi (\cdot)}\psi_1, \e^{- 2 \pi \i \xi (\cdot)}\psi_2](x) \ \ .
\end{eqnarray}
Since $\phi$ satisfies \eqref{e.phi}, it follows that if for some $\xi$, $x_1$, $x_2$ we have
\begin{eqnarray*}
\frac{d^n}{d x^n}\left(\e^{- 2\pi \i \xi x} \psi_i(x) \right) &\le& C_{n,m} 2^{-\ell (n+\frac 1 2)} (1 + \frac{|x - x_i|}{2^\ell})^{-m}
\end{eqnarray*}
for each $i=1,2$, and for every $n,m \geq 0$,  then
\begin{eqnarray} \label{derivativeofbilinear}
&&\frac{d^n}{dx^n} \Big(\e^{- 2 \pi \i 2 \xi x} B_{\phi,\ell}[\psi_1,\psi_2](x) \Big)  \quad \le \\
\nonumber &\le& C_{n,m} 2^{-\ell (n+1)} (1 + \frac{|x_1 - x_2|}{2^\ell})^{-m} (1+\frac{|x - (x_1 + x_2)/2|}{2^\ell})^{-m}
\end{eqnarray}
for each $n,m \geq 0$. (Here we emphasize that $C_{m,n}$'s are independent of $\xi$.) 

Fix a Schwartz function $\widehat{\psi}$ supported on $[0,2/5)$ such that $\sum_{j \in \BBZ} |\widehat{\psi}(\cdot - \frac{j}{5})|^2 = 1$. For each pair of intervals $(I,\omega)$ with $|\omega||I|=1$ let 
\begin{eqnarray*}
\widehat{\psi}_{I,\omega} = \e^{- 2 \pi \i c(I) \xi} |\omega|^{-1/2} \widehat{\psi}(|\omega|^{-1}(\xi - c(\omega))) 
\end{eqnarray*}
which is supported inside  the right half of $\omega$. Using a Fourier sampling theorem,  for  any Schwartz function $f$ it holds that
\begin{eqnarray*}
f = \sum_{j = 0}^{4} \sum_{\substack{\omega \in \calG_0 \\ |\omega| = 2^{-\ell}}} \sum_{\substack{I \in \calG_0 \\ |I| = 2^\ell}} \<f,\psi_{I,\omega + \frac{j}{5}|\omega|}\> \psi_{I,\omega + \frac{j}{5}|\omega|}\ \ \ \ .
\end{eqnarray*}
Let $\omega_1 = \omega + \frac{j_1}{5}|\omega|$ and $\omega_2= \omega + (\frac{j_2}{5} + e) |\omega|$. By \eqref{diffsupport}, it follows that $B_{\phi,\ell}[f_1,f_2](x)$ can be written as the sum over triplets of integers $(j_1,j_2,e)$, with $0\le j_1,j_2\le 4$ and $498\le |e| \le 4002$, of
\begin{eqnarray} \label{sumofwavepackets}
\sum_{\substack{\omega \in \calG_0 \\ |\omega| = 2^{-\ell}}} \sum_{\substack{I_1 \in \calG_0 \\ |I_1| = 2^{\ell}}} \sum_{\substack{I_2 \in \calG_0 \\ |I_2| = 2^{\ell}}} \<f_1, \psi_{I_1, \omega_1}\>\<f_2, \psi_{I_2, \omega_2}\> \varphi_{I_1,I_2,\omega,j_1,j_2,e}(x) \ \ ,
\end{eqnarray}
\begin{eqnarray*}
\varphi_{I_1,I_2,\omega,j_1,j_2,e} (x) &:=& 
B_{\phi,\ell}[\psi_{I_1,\omega + \frac{j_1}{5}|\omega|}, \psi_{I_2,\omega + (\frac{j_2}{5} + e) |\omega|}](x) \ \ \ .
\end{eqnarray*}
By \eqref{sumsupport}, $\widehat{\varphi}_{I_1,I_2,\omega,j_1,j_2,e}$ is supported on $\omega_3:=[0,|\omega|) + 2 c(\omega) + (\frac{j_1 + j_2}{5} + e)|\omega|$
and, by \eqref{derivativeofbilinear}, satisfies
\begin{eqnarray}  \label{phidecay}
&&\frac{d^n}{dx^n}\Big( \e^{- 2 \pi \i (2c(\omega) + (\frac{j_1 + j_2}{5} + e)|\omega|) x} \varphi_{I_1,I_2,\omega,j_1,j_2,e}(x)\Big) \leq   \\
\nonumber &\le&  C_{n,m} 2^{-\ell(n+1)}  (1 + \frac{|c(I_1) - c(I_2)|}{2^\ell})^{-m} (1+\frac{|x - (c(I_1) + c(I_2))/2|}{2^\ell})^{-m}\\
\nonumber &\le&  C_{n,m} 2^{-\ell(n+1)}  (1 + \frac{|c(I_1) - c(I_2)|}{2^\ell})^{-m} (1+\frac{|x - c(I_1)|}{2^\ell})^{-m}
\end{eqnarray}
for each $n,m \geq 0$. (Note that $j_1,j_2,e$ are bounded.)

We now fix $I_1$ and further divide the right hand side of \eqref{sumofwavepackets} according to  $j:=2^{-\ell}(c(I_1)-c(I_2))$. For $j=0$ i.e. for terms in the sum \eqref{sumofwavepackets} where $I_1 = I_2=:I$ we may define the tri-tile $P=(I,\omega_1,\omega_2,\omega_3)$ and the corresponding wave packets naturally 
\begin{eqnarray*}
\psi_{0,P,1} &=& \psi_{I_1,\omega_1} \ \ , \ \ \psi_{0,P,2} \quad =\quad \psi_{I_2,\omega_2} \ \ , \ \ \psi_{0,P,3}\quad=\quad |I|^{1/2} \psi_{I_1,I_2,\omega,j_1,j_2,e} \  \ .
\end{eqnarray*}

The remaining terms can be dealt with by using the rapid decay in $|c(I_1) - c(I_2)|$,  \eqref{phidecay}: we still define $\psi_{j,P,1}=\psi_{I_1,\omega_1}$, however to shift the localization of $\psi_{I_2, \omega_2}$ to $I_1$ we define
\begin{eqnarray*}
\psi_{j,P,2} &:=& (1 + \frac{|c(I_1) - c(I_2)|}{2^\ell})^{-L} \cdot \psi_{I_2, \omega_2} \ \ ,\\
\psi_{j,P,3} &:=& 2^{Nj}  (1 + \frac{|c(I_1) - c(I_2)|}{2^\ell})^{L} \big(|I|^{1/2} \psi_{I_1,I_2,\omega,j_1,j_2,e})
\end{eqnarray*}
for some large $L$. (The rapid decay in \eqref{phidecay} takes care of the extra factors in $\psi_{j,P,3}$.)

Finally, we split \eqref{sumofwavepackets} up one more time so that  whenever $|I| >|I'|$ we have $|I| = 2^{4000k} |I'|$ for some positive integer $k$.  Note that while this splitting gives rise to the sparseness required by $\P_\nu$, it also means that we need to relabel and rescale the $\phi_{j,P,i}$ slightly to maintain the sequence of weights $2^{-N|j|}$. \endproof

It follows that to prove restricted weak-type estimates for $\Lambda_{long}$, we are left with showing the following theorem. In the theorem, $\nu=(j_1,j_2,e,i)$ is any quadruplet of integers such that $0\le j_1,j_2\le 4$, $498\le |e|\le 4002$, $0\le i<4000$, and $\P_\nu$ is defined by \eqref{e.Pnu-def}.

\begin{theorem} \label{t.model-varnorm}
Let $r > 2$ and $p_1, p_2,q$ satisfy \eqref{e.BHTrange}.

Suppose that (uniformly over tri-tiles $P \in \P_{\nu}$, $i=1,2,3$),  $\psi_{P,i}$ is an $L^2$ normalized wave packet adapted to $P_i$ up to order $M$ sufficiently large (the required $M$ may depend on $p_1,p_2,q$).

Then the trilinear form
\begin{eqnarray*}
\<\sum_{n=1}^L \sum_{
P \in \P_{\nu}: \ 2^{k_{n-1}}\le |I_P| < 2^{k_n}} |I_{P}|^{-1/2} \<f_1,\psi_{P,1}\> \<f_2,\psi_{P,2}\> \psi_{P,3}\ a_n, f_3\>
\end{eqnarray*}
satisfies restricted weak-type estimates with  exponents $\alpha$ arbitrarily close to any given vertex of $A$ defined by \eqref{e.A-def}.
\end{theorem}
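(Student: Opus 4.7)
The plan is to prove Theorem~\ref{t.model-varnorm} by adapting the time--frequency blueprint of Lacey~\cite{lacey2000}, Demeter~\cite{demeter2007}, and Demeter--Tao--Thiele~\cite{dtt2008}, with the variational coefficients $a_n$ and stopping scales $(k_n)$ absorbed into a new size functional on the $f_3$-side of the analysis.

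\textbf{Setup and exceptional set.} Using the symmetries among the six vertices of $A$ (realized by permuting the roles of $f_1,f_2,f_3$ and adjusting the parameter $\nu$ together with the order $\lesssim'$), it suffices to treat $\alpha$ close to one or two representative vertices, so one may fix $j_0$ according to the negative coordinate of $\alpha$. For fixed sets $F_1,F_2,F_3\subset\R$ of finite measure, normalize so that $|F_{j_0}|=1$, and define the exceptional set $B\subset F_{j_0}$ to be the union of level sets $\{\ma(1_{F_j})>C|F_j|\}$ for $j\ne j_0$ together with a level set of Lacey's bilinear maximal operator~\cite{lacey2000}. Standard weak-type bounds guarantee $|B|\le|F_{j_0}|/2$, and the restricted weak-type estimate \eqref{e.trilinear-alpha} is to be proved for $|f_j|\le 1_{F_j}$ with $|f_{j_0}|\le 1_{F_{j_0}\setminus B}$.

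\textbf{Tree decomposition and tree estimate.} Restrict attention to the sub-collection $\P^*\subset\P_\nu$ of tri-tiles whose spatial interval $I_P$ is not buried inside a mild enlargement of $B$; the discarded tiles contribute negligibly because of the rapid decay \eqref{e.wavepacketdecay} of the wave packets. For each $i\in\{1,2,3\}$ introduce a size $\size_i$ measuring an $L^2$--BMO type density of the coefficients $\<f_i,\psi_{P,i}\>$ along $i$-overlapping trees; crucially, for $i=3$ the wave packet $\psi_{P,3}$ must be paired with the $x$-dependent multiplier $a_{n(P,x)}$, where $n(P,x)$ is the unique index with $2^{k_{n-1}(x)}\le|I_P|<2^{k_n(x)}$. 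A greedy tree-selection algorithm (the size lemma, to be established in Section~\ref{s.sizelemma}) then decomposes $\P^*$ into a disjoint union, over $i\in\{1,2,3\}$ and $k\ge 0$, of strongly $i$-disjoint forests $\F^{(i)}_k$ of $\size_i\lesssim 2^{-k}$, together with an energy-type control on $\sum_{T\in\F^{(i)}_k}|I_T|$. Within each tree $T$, the standard phase-plane orthogonality yields the tree estimate
\begin{equation*}
\Big|\sum_{P\in T}|I_P|^{-1/2}\<f_1,\psi_{P,1}\>\<f_2,\psi_{P,2}\>\<\psi_{P,3}\,a_{n(P,\cdot)},f_3\>\Big|\ \lesssim\ |I_T|\prod_{i=1}^{3}\size_i(T),
\end{equation*}
where the $\size_1,\size_2$ factors are controlled by the definition of $B$ and the usual localized maximal-function estimates.

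\textbf{Main obstacle: the variation-norm Bessel inequality.} The principal new difficulty is the energy control for $\F^{(3)}_k$, since the coefficients $a_{n(P,x)}$ vary with scale inside a tree and have only an $\ell^{r'}$ rather than an $\ell^\infty$ constraint. This is where the classical Bessel inequality of Lacey~\cite{lacey2000} and Demeter--Tao--Thiele~\cite{dtt2008} must be upgraded to a variation-norm statement. The plan is to dualize the $\ell^{r'}$ constraint $\sum_n|a_n|^{r'}\le1$ and reduce the required energy bound to an $L^2$ variation-norm estimate for the \emph{sharp} multi-frequency Fourier projections associated with the frequency centres $\{c(\omega_{P_3}):P\in T\}$. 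That estimate is Theorem~\ref{t.bourgain-varnorm}, to be proved by adapting the smooth multi-frequency argument of Nazarov--Oberlin--Thiele~\cite{not2010} to the sharp-projection setting originally considered by Bourgain~\cite{bourgain1990}; this is the single step where genuinely new ideas beyond \cite{lacey2000,demeter2007,dtt2008} are required.

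\textbf{Interpolation.} Summing the tree estimate over the forests $\F^{(i)}_k$ and balancing the sizes $\size_1,\size_2,\size_3$ against their energies by a weighted geometric series (in the spirit of \cite{mtt2002}) then produces the required restricted weak-type bound at any prescribed vertex of $A$, up to the standard arbitrarily small loss in the exponents. The entire argument is driven by the variation-norm Bessel estimate: once it is available, the remaining tree-selection, tree-estimate, and interpolation steps follow the established framework with only cosmetic adjustments, and the same skeleton (with adjustments indicated in Section~\ref{s.short-concluding}) will handle the short-jump theorem.
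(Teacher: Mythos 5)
Your proposal is correct and follows essentially the same architecture as the paper: the exceptional set built from localized maximal functions, the size/energy stopping-time decomposition (Propositions~\ref{p.sizebound} and \ref{l.size-increment}), the tree estimate, the variation-norm Bessel inequality (Theorem~\ref{t.vnbi}) reduced to the sharp multi-frequency projection bound (Theorem~\ref{t.bourgain-varnorm}), and restricted weak-type interpolation in the spirit of \cite{mtt2002}. Two harmless inaccuracies you should be aware of: the paper's exceptional set $B$ is $\bigcup_{j=1}^{3}\{\ma^{s}[1_{F_j}]\ge C|F_j|^{1/s}\}$ over all three indices (no appeal to Lacey's bilinear maximal function is needed at this stage), and the multi-frequency projection in \eqref{projectionworks} is anchored at the tree-top frequencies $\{c(\omega_{(P_T)_j}):T\in\T_J\}$ rather than the individual tile frequencies $\{c(\omega_{P_3}):P\in T\}$.
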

 
Recall that $\sum_{n=1}^{L(x)} |a_n(x)|^{r'} \le 1$. For convenience, let $a_{P}(x) := a_{m}(x)$ if $m = m(P,x)$ is the unique integer in $\{1,\dots, L(x)\}$ satisfying $2^{k_{m-1}(x)} \leq |I_P| < 2^{k_{m}(x)}$, and $a_P(x)=0$ if such $m$ does not exist. We also  let $\phi_{P,3}(x) = a_p(x)\psi_{P,3}(x)$ and $\phi_{P,i} = \psi_{P,i}$ for $i=1,2$. Let $\P$ be a finite subset of $\P_\nu$.  It suffices to demonstrate that the trilinear form
\begin{eqnarray} \label{maintheoremlinbound}
 \Lambda_{\P}(f_1,f_2,f_3) = \<\sum_{P \in \P} |I_{P}|^{-1/2}\<f_1,\phi_{P,1}\> \<f_2,\phi_{P,2}\> \phi_{P,3},f_3\>
\end{eqnarray}
is of restricted weak type with exponents $\alpha$, with  $\P$-uniform implicit constants.

\subsection{Discretization for $\Lambda_{short,s}$}

Recall that
\begin{eqnarray*}
\Lambda_{short,s}(f_1,f_2,f_3) &=&   \<\sum_{n \in \Z} \int f_1(\cdot +y) f_2(\cdot - y) 2^{-n} K_s(2^{-n}y)dy d_n(s,\cdot), f_3\>
\end{eqnarray*}
Using $\widehat K_s(\xi) \lesssim |\xi|$ we could proceed as in Case 1 of the proof of Lemma~\ref{l.Kdecomp} and obtain a decomposition
\begin{eqnarray*}
\widehat K_s(\xi) &=& \sum_{j\in \Z} 2^{-|j|} \widehat K_{s,j}(2^j \xi)
\end{eqnarray*}
where $\widehat K_{s,j}$ is supported in $\{1000\le |\xi| \le 2000\}$ and $\frac{d^n}{\xi^n} \widehat K_{s,j}(\xi) \lesssim_n 1$ for all $n\ge 0$. Thus it suffices to consider restricted weak-type estimates for
\begin{eqnarray*} &&\<\sum_{n\in \Z} \int f_1(\cdot +y) f_2(\cdot - y) 2^{-(n+j)} K_{s,j}(2^{-(n+j)}y)dy \  d_n(s,\cdot), f_3 \>\\
&=& \<\sum_{n \in \Z} B_{K_{s,j},n}[f_1,f_2](\cdot) d_n(s,\cdot), f_3\> \  \ .
\end{eqnarray*}
Now using Lemma~\ref{l.wavelet-rep} with $K_{s,j}$ playing the role of $\phi$, it follows that to obtain the desired restricted weak-type estimates  for $\Lambda_{short}$ we are left with showing the following theorem.  Below, $\nu=(j_1,j_2,e,i)$ is any quadruplet of integers such that $0\le j_1,j_2\le 4$, $498\le |e|\le 4002$, $0\le i<4000$, and $\P_\nu$ is defined by \eqref{e.Pnu-def}.

\begin{theorem} \label{t.model-square}
Let $r > 2$ and $p_1, p_2,q$ satisfy \eqref{e.BHTrange}.

Suppose that (uniformly over tri-tiles $P \in \P_{\nu}$, $i=1,2,3$),  $\psi_{P,i}$ is an $L^2$ wave packet adapted to $P_i$ up to order $M$ sufficiently large.

Let $(d_n)_{n\in \Z}$ be a sequence of measurable functions such that $\sum_n |d_n(x)|^2 \le 1$.

Then the trilinear form
\begin{eqnarray*}
\<\sum_{n \in \Z} \sum_{P \in \P_{\nu}: \  |I_P|=2^n} |I_{P}|^{-1/2} \<f_1,\psi_{P,1}\> \<f_2,\psi_{P,2}\> \psi_{P,3}\ d_n, f_3\>
\end{eqnarray*}
satisfies restricted weak-type estimates with exponents $\alpha$ arbitrarily close to any vertex of $A$ defined by \eqref{e.A-def}.
\end{theorem}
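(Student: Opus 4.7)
My plan is to run the same size/energy/tree-selection argument as for Theorem~\ref{t.model-varnorm}, after absorbing the coefficient sequence $(d_n)$ into the third wave packet. Specifically, for each tri-tile $P$ define $d_P(x) := d_n(x)$ when $|I_P|=2^n$, and set $\phi_{P,3}(x) := d_P(x)\psi_{P,3}(x)$, $\phi_{P,i} := \psi_{P,i}$ for $i=1,2$. Then for any finite $\P \subset \P_\nu$ the form under consideration takes the shape \eqref{maintheoremlinbound}, and I must prove restricted weak-type estimates with exponents $\alpha$ arbitrarily close to each vertex of $A$, with constants uniform in $\P$.

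From here I would reproduce the restricted weak-type interpolation strategy near a fixed vertex of $A$: pick the index $j_0$, construct an exceptional minor subset $B \subset F_{j_0}$ from the Hardy--Littlewood maximal function of $1_{F_{j_0}}$, decompose $\P$ iteratively into families of trees of geometrically decreasing size via a John--Nirenberg-type tree selection algorithm, and combine a size lemma (bounding the total measure of tops in each family) with a pointwise tree estimate, then sum over sizes to conclude.

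The only substantive difference from the long-jump case lies in the definition and control of the \emph{third-tile size} and the associated Bessel-type inequality. The sequence $(d_n)$ here satisfies $\sum_n |d_n(x)|^2 \le 1$, which is a genuinely $\ell^2$ condition, in contrast with the $\ell^{r'}$ condition (with $r' < 2$) appearing in Theorem~\ref{t.model-varnorm}. Consequently, the third-tile size can be defined in square-function form, and the classical maximal Bessel inequalities of Lacey~\cite{lacey2000} and Demeter--Tao--Thiele~\cite{dtt2008}, rather than the variation-norm extension of Theorem~\ref{t.bourgain-varnorm}, are sufficient for the corresponding size bound. Since the $\ell^2$ hypothesis is strictly weaker than the $\ell^{r'}$ one ($\ell^{r'} \subset \ell^2$ when $r>2$), no deterioration in the exponent range occurs, and in fact one expects the short-jump estimate to be slightly simpler than its long-jump counterpart.

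I do not anticipate a serious obstacle, which is consistent with the paper outline describing the needed modifications as cosmetic. The only places requiring mild care are verifying that the energy estimate for the third tile still absorbs the factor $d_P$ cleanly (using the pointwise bound $|d_P(x)| \le 1$ along with the orthogonality built into the wave packets), and that the size lemma proceeds unchanged once the third-tile size is reformulated as a square function. Once these two local adjustments are in place, the tree decomposition, size lemma, and interpolation argument transcribe directly from the long-jump case, yielding Theorem~\ref{t.model-square}.
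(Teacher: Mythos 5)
Your proposal matches the paper's approach: define $\phi_{P,3}=d_n\psi_{P,3}$ for $|I_P|=2^n$, rerun the size/tree-selection argument verbatim, and note that the $\ell^2$ condition on $(d_n)$ means the Bessel-type ingredient only needs to control a square function over scales rather than an $r$-variation, so the full variation-norm multi-frequency estimate (Theorem~\ref{t.bourgain-varnorm}) is not needed. One small imprecision: what is actually needed is a square-function Bessel inequality over scales, not literally the \emph{maximal} Bessel inequalities of Lacey and Demeter--Tao--Thiele; the paper obtains it by bounding the square function by the $V^2$ norm and then using the elementary Rademacher--Menshov Lemma~\ref{vnrmlemma}, the non-maximal Bessel Lemma~\ref{nonmaxbessellemma}, and the $V^2$ multi-frequency bound of Proposition~\ref{bourgainrmprop} in place of Theorem~\ref{t.bourgain-varnorm} in the final step \eqref{e.sq-wavelet3}; this is the same simplification you have in mind, just packaged slightly differently.
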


\section{Auxiliary estimates}\label{s.auxiliary}

The following bound follows from the L\'{e}pingle inequality and a square function argument, see \cite{jsw2008} for details.

\begin{lemma} \label{l.lepingle}
Let $\zeta$ be any Schwartz function. Let $\zeta_k(\cdot)=2^{-k}\zeta(2^{-k}\cdot)$. Then for $r > 2$ and $1 < s < \infty$ it holds that
\begin{eqnarray*}
\|(\zeta_k*f)(x)\|_{L_x^s(V^r_k(\Z))} \lesssim_{r,s} \|f\|_{L^{s}}
\end{eqnarray*} 
\end{lemma}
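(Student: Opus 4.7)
The strategy is to split $\zeta$ into a mean-value part, handled by martingale comparison and Lépingle's inequality, and a mean-zero part, handled by a Littlewood--Paley square function. Write $c := \widehat{\zeta}(0)$ and fix any Schwartz function $\phi$ with $\widehat{\phi}(0) = 1$; decompose $\zeta = c\phi + \psi$ where $\widehat{\psi}(0) = 0$. By the triangle inequality, it suffices to treat $\phi_k * f$ and $\psi_k * f$ separately, and throughout I will use the elementary pointwise inequality $\|a_k\|_{V^r_k(\Z)} \lesssim_r \|a_k\|_{\ell^2_k(\Z)}$, valid for every $r \ge 2$ (a consequence of $|a-b|^r \le 2^{r-1}(|a|^r+|b|^r)$ and the embedding $\ell^r \hookrightarrow \ell^2$).

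For the mean-zero contribution $\psi_k * f$, the pointwise reduction $V^r \lesssim \ell^2$ combined with the standard Littlewood--Paley estimate
\begin{equation*}
\Big\|\Big(\sum_{k \in \Z} |\psi_k * f|^2\Big)^{1/2}\Big\|_{L^s(\R)} \lesssim_s \|f\|_{L^s(\R)}, \qquad 1 < s < \infty,
\end{equation*}
is enough. This multiplier bound follows from the vector-valued H\"ormander--Mihlin theorem applied to $(\widehat{\psi}(2^k\cdot))_{k\in\Z}$, exploiting $\widehat{\psi}(0) = 0$ together with the rapid decay of $\widehat{\psi}$.

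For the mean-value piece $\phi_k*f$, let $E_k$ denote conditional expectation onto the $\sigma$-algebra generated by the dyadic partition of $\R$ into intervals of length $2^k$, and split $\phi_k * f = E_k f + T_k f$ where $T_k f := \phi_k * f - E_k f$. For the martingale term, the sequence $(E_k f)_{k \in \Z}$ (indexed so that $k$ decreases under refinement) is a dyadic martingale, so the classical Lépingle inequality yields $\|E_k f\|_{L^s_x(V^r_k)} \lesssim_{r,s} \|f\|_{L^s}$ directly for all $r > 2$ and $1 < s < \infty$. For the error $T_k f$, one again reduces $V^r$ to $\ell^2$ and is left to prove
\begin{equation*}
\Big\|\Big(\sum_{k \in \Z} |T_k f|^2\Big)^{1/2}\Big\|_{L^s} \lesssim_s \|f\|_{L^s}.
\end{equation*}

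The main obstacle is this last square function, since $E_k$ is not a Fourier multiplier and so the estimate cannot be read off from Plancherel. At $s = 2$ one proceeds by an orthogonality argument: both $2^{-k}\phi(2^{-k}\cdot)$ and the integration kernel of $E_k$ are $L^1$-normalized approximations to the identity at scale $2^k$, so their difference enjoys cancellation at each dyadic scale, and the resulting $L^2$ square function bound follows from a direct Haar-type decomposition combined with the rapid decay of $\phi$. For $s \neq 2$, one verifies the size and H\"ormander smoothness conditions for the $\ell^2$-valued kernel $(k \mapsto T_k(x,y))$ and invokes vector-valued Calder\'on--Zygmund theory to extend the bound to all $1 < s < \infty$. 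Summing the three contributions gives the lemma. The details of this argument are carried out in \cite{jsw2008}.
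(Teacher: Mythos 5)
Your decomposition $\zeta = c\phi + \psi$ with $\widehat\psi(0)=0$, the further split $\phi_k * f = E_k f + (\phi_k * f - E_k f)$, the pointwise bound $\|\cdot\|_{V^r}\lesssim\|\cdot\|_{\ell^2}$ for $r\ge 2$, L\'epingle for the dyadic martingale, and vector-valued Calder\'on--Zygmund for the two square functions is precisely the argument the paper points to in \cite{jsw2008} (it gives no proof itself). Your sketch is correct and takes the same route; the only thing worth tightening if you wrote this out in full is the $L^2$ bound for $\sum_k|T_kf|^2$, where the ``cancellation'' you invoke is more cleanly packaged as the almost-orthogonality $\|T_jT_k^*\|_{2\to2}\lesssim 2^{-c|j-k|}$, which uses both $T_k1=0$ and $T_k^*1=0$ together with the kernel decay.
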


Next, we have a Rademacher-Menshov type lemma:

\begin{lemma} \label{vnrmlemma}
Let  $f_1, \ldots, f_N$ be functions on a measure space $X$ such that for every sequence of signs $\epsilon_1, \ldots, \epsilon_N \in \{1,-1\}$ it holds that
\begin{eqnarray} \label{randomsignassumption}
\|\epsilon_1 f_1 + \epsilon_2 f_2 +\dots + \epsilon_N f_N\|_{L^2} &\le& B.
\end{eqnarray}
Then
\begin{eqnarray*}
\|\sum_{j = 1}^n f_j(x)\|_{L^2_x(V^2_n)} \lesssim (1 + \log(N)) B.
\end{eqnarray*}
\end{lemma}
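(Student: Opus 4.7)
The plan is to follow the classical Rademacher--Menshov argument: decompose each increment of a near-optimal $V^2$ partition into $O(\log N)$ dyadic ``block sums'', apply Cauchy--Schwarz, and control the resulting dyadic square function by a randomization that uses \eqref{randomsignassumption}. For each dyadic subinterval $I \subset [1,N]$ set $S_I := \sum_{j \in I} f_j$, and write $S_n := \sum_{j=1}^{n} f_j$. The target is
\begin{eqnarray*}
\int \|S_n(x)\|_{V^2_n}^2\, dx &\lesssim& (\log N)^2 B^2,
\end{eqnarray*}
whose square root gives the lemma.

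The workhorse bound is: for any collection $\mathcal{F}$ of pairwise \emph{disjoint} dyadic subintervals of $[1,N]$,
\begin{eqnarray*}
\sum_{I \in \mathcal{F}} \|S_I\|_{L^2}^2 &\le& B^2.
\end{eqnarray*}
To prove this, fix signs $(\epsilon_I)_{I \in \mathcal F}$; by disjointness $\sum_I \epsilon_I S_I = \sum_j \delta_j f_j$ with $\delta_j \in \{-1,0,+1\}$, and writing $\delta_j = \tfrac{1}{2}(\alpha_j + \beta_j)$ with $\alpha_j,\beta_j \in \{-1,+1\}$ and applying \eqref{randomsignassumption} twice yields $\|\sum_I \epsilon_I S_I\|_{L^2} \le B$. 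Taking $(\epsilon_I)$ i.i.d.\ Rademacher, the expected squared $L^2$ norm equals $\sum_I \|S_I\|_{L^2}^2$ by orthogonality, so this sum is $\le B^2$. Applied scale-by-scale across the $\lesssim \log N$ levels $\{I : |I| = 2^\ell\}$ and summed, this yields
\begin{eqnarray*}
\sum_{I \text{ dyadic} \subset [1,N]} \|S_I\|_{L^2}^2 &\lesssim& (\log N)\, B^2.
\end{eqnarray*}

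For each $x$, measurably pick a near-optimal partition $n_0(x) < \cdots < n_{K(x)}(x)$ in the definition of $\|S_n(x)\|_{V^2_n}$. Each increment interval $(n_{k-1}(x), n_k(x)]$ admits the standard greedy dyadic decomposition into at most $C\log N$ disjoint dyadic intervals $\mathcal D_k(x)$ with $S_{n_k}(x) - S_{n_{k-1}}(x) = \sum_{I \in \mathcal D_k(x)} S_I(x)$, and Cauchy--Schwarz gives $|S_{n_k}(x) - S_{n_{k-1}}(x)|^2 \le C\log N \sum_{I \in \mathcal D_k(x)} |S_I(x)|^2$. Because the intervals $(n_{k-1}(x), n_k(x)]$ are disjoint, for each scale $2^\ell$ the intervals in $\bigcup_k \mathcal D_k(x)$ of length $2^\ell$ are pairwise disjoint, so $\sum_k \sum_{I \in \mathcal D_k(x)} |S_I(x)|^2 \le \sum_{I \text{ dyadic} \subset [1,N]} |S_I(x)|^2$ pointwise in $x$. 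Integrating and invoking the previous display proves the target bound. The only mild subtlety is that the partition $(n_k)$ depends on $x$, but this is neutralized because the disjoint-dyadic square function bound is uniform over the choice of family $\mathcal F$ and so dominates the $x$-dependent one scale by scale.
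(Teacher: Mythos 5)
Your argument is correct and is a valid Rademacher--Menshov proof, but it is organized differently from the paper's. The paper decomposes the \emph{partial sum}: it writes $\sum_{j<n} f_j = \sum_{m}\sum_{j\in\omega_{n,m}} f_j$ where, for each scale $m$, $\omega_{n,m}$ is a dyadic block of length $2^m$ that is constant in $n$ on dyadic intervals of length $2^m$; it then applies the triangle inequality in $L^2_x(V^2_n)$ over the $\lesssim\log N$ scales and bounds each scale's contribution by a single-scale square function $\bigl(\sum_{|\omega|=2^m}|S_\omega|^2\bigr)^{1/2}$, whose $L^2$ norm is $\le B$ by the sign hypothesis applied with signs constant on blocks of length $2^m$. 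You instead first aggregate those single-scale bounds into the global estimate $\sum_{I\ \mathrm{dyadic}}\|S_I\|_{L^2}^2 \lesssim (\log N)\,B^2$, then decompose each \emph{increment} of an $x$-dependent near-optimal $V^2$ partition greedily into $\lesssim\log N$ dyadic blocks, apply Cauchy--Schwarz pointwise, and use disjointness of the increment intervals to dominate the resulting pointwise square function by the $x$-independent $\sum_I |S_I(x)|^2$. Both routes incur the same $\log N$ loss; yours is the more classical formulation and avoids the piecewise-constancy observation, at the cost of an extra pointwise Cauchy--Schwarz and the (correctly flagged) $x$-dependence of the partition. One small omission: the paper's $V^2$ norm includes the leading term $|a(N_0)|^2$, so you must also control $\int |S_{n_0(x)}(x)|^2\,dx$. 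This is handled by the same greedy dyadic decomposition applied to $[1,n_0(x)]$, which is disjoint from all increment intervals, so the fix is cosmetic and does not affect the count.
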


\begin{proof}
We rewrite  $\sum_{j = 1}^n f_j(x) = f_n(x) + \sum_{1\le j < n} f_j(x)$. Estimating the $V^2$ norm by the $\ell^2$ norm, it is clear that
\begin{eqnarray*}
\|f_n(x)\|_{L^2_x(V^2_n)} &\lesssim& \left( \sum_{j = 1}^{N}\|f_j\|^2_{L^2}\right)^{1/2} \quad \lesssim\quad \quad \E_{\epsilon_1,\dots,\epsilon_n} \|\sum_j \epsilon_j f_j\|_{L^2}  \quad \lesssim \quad B \ \ .
\end{eqnarray*}
  
It remains to consider the contribution from $\sum_{1\le j < n}f_j$. For each $n \in \{1,\dots, N\}$ we will decompose $[1,n)$ into  disjoint subintervals,
\begin{eqnarray*}
[0,n) = \bigcup_{m \leq \log_2(N)} \omega_{n,m} \ \ ,
\end{eqnarray*}
as follows: Let $I$ be the dyadic interval of length $2^{m+1}$ that contains $n$. If $n$ is on the left half of $I$ then let $\omega_{n,m} = \emptyset$. 
If $n$ is on the right half of $I$ then let $\omega_{n,m}$ be the left half of $I$. It follows that
\begin{eqnarray*}
\|\sum_{j < n} f_j(x)\|_{L^2_x(V^2_n)} &\le& \sum_{0\le m \le \log_2(N)} \|\sum_{j \in \omega_{n,m}} f_j(x)\|_{L^2_x(V^2_n)}.
\end{eqnarray*}
Since for each $m$, $\omega_{n,m}$ is constant (in $n$) on dyadic intervals of length $2^m,$ we have 
\begin{eqnarray*}
\|\sum_{j \in \omega_{n,m}} f_j(x)\|_{L^2_x(V^2_n)} &\lesssim& \|(\sum_{\omega \ dyadic: \ |\omega| = 2^m}|\sum_{j \in \omega} f_j(x)|^2)^{1/2}\|_{L^2_x} \\ &\le& B. 
\end{eqnarray*}
here the final inequality follows from another appeal to \eqref{randomsignassumption} using sequences $(\epsilon_j)$ that are constant (as functions of $j$) on dyadic intervals of length $2^m$.
\end{proof}

We'll also use a Bessel inequality, Lemma~\ref{nonmaxbessellemma}. For a proof see e.g.  \cite[Proposition 13.1]{dtt2008}. Below recall that $\psi_{P,j}$ are (unmodified) $L^2$-normalized Fourier wave packet up to sufficiently high orders:

\begin{lemma} \label{nonmaxbessellemma}
Let $j\in \{1,2,3\}$. Let $\T$ be a collection of strongly $j$-disjoint trees, let $\Q = \bigcup_{T \in \T}T$, and suppose that $\|\sum_{T \in \T}1_{I_T}\|_{L^{\infty}} \leq L.$ Then for any sequence of coefficients $\{b_P\}_{P \in \Q}$
\begin{eqnarray*}
\|\sum_{P \in \Q} b_P \psi_{P,j}\|_{L^2} &\lesssim& \log(1 + L)\|b_P\|_{\ell^2(\Q)}.
\end{eqnarray*}
\end{lemma}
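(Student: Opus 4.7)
The plan is to prove the dual $\ell^2$ bound
\begin{eqnarray*}
\Big(\sum_{P \in \Q} |\langle f, \psi_{P,j}\rangle|^2\Big)^{1/2} &\lesssim& \log(1+L)\,\|f\|_{L^2}
\end{eqnarray*}
for all $f \in L^2$, which is equivalent to the claimed synthesis estimate by $TT^*$.

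First I would establish a Bessel bound for a single tree. If $T$ is $j$-lacunary with top $P_T$, the ordering $P_j \lesssim' (P_T)_j$ for $P \ne P_T$, combined with the fact that all such $\omega_{P_j}$ sit on one fixed lacunary side of $\omega_{(P_T)_j}$ and are forced by the tri-tile structure to have dyadically related scales, implies that $\{10\,\omega_{P_j}\}_{P \in T}$ is pairwise disjoint. Since $\widehat{\psi_{P,j}}$ is supported in $\omega_{P_j}$, Plancherel then gives
\begin{eqnarray*}
\sum_{P \in T} |\langle f, \psi_{P,j}\rangle|^2 &\lesssim& \|\Pi_T f\|_2^2,
\end{eqnarray*}
where $\Pi_T$ denotes the sharp Fourier projection onto $\bigcup_{P \in T}\omega_{P_j}$.

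Next I would exploit the spatial decay of $\psi_{P,j}$, which is concentrated on $I_P \subset I_T$. Splitting $f = f\,w_T + f(1-w_T)$ for a smooth bump $w_T$ comparable to $1_{I_T}$ and absorbing the resulting Schwartz-type tail by standard maximal-function estimates, the tree bound upgrades to control of $\|\Pi_T(f\,w_T)\|_2^2$. At this stage strong $j$-disjointness is decisive: condition~(3) of Definition~\ref{d.strong-j-disjoint} forces strict frequency nesting $\omega_{P_j} \subsetneq \omega_{P'_j}$ across trees to be accompanied by spatial separation $I_{P'} \cap I_T = \emptyset$, while condition~(4) rules out the dual configuration $P'_j \leq (P_T)_j$. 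Consequently the operators $f \mapsto \Pi_T(f\,w_T)$ form an almost orthogonal family on $L^2$, interacting only through the spatial overlap of the collection $\{I_T\}_{T\in\T}$.

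The main obstacle, and the origin of the logarithmic loss, is assembling the above into the final bound. My approach would be a Rademacher-Menshov style layering: partition $\T$ into $O(\log(1+L))$ sub-collections $\T_0,\ldots,\T_{O(\log L)}$ on each of which $\{I_T\}$ has pairwise disjoint (or $O(1)$-overlapping) interiors, which is possible precisely because $\|\sum_T 1_{I_T}\|_\infty \leq L$. On each layer, strong $j$-disjointness plus the single-tree estimate and Plancherel yield a clean bound by $\|f\|_2^2$, independent of the layer; Minkowski's inequality across the $O(\log(1+L))$ layers then contributes the $\log(1+L)$ factor. The delicate point is verifying that the sharp frequency projections $\{\Pi_T\}$ restricted to a single layer genuinely act as a near-orthogonal system, which is exactly the combinatorial content secured by the four conditions of Definition~\ref{d.strong-j-disjoint}; once this is in place, the remainder is a routine $TT^*$ computation.
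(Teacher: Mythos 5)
The paper does not prove this lemma itself; it cites \cite[Proposition 13.1]{dtt2008} (see the sentence preceding the statement), so there is no in-paper proof to compare against. Evaluating your proposal on its own terms, there are two gaps, the second of which is fatal as written.

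First, the single-tree step rests on the assertion that the dilated frequency intervals $\{10\,\omega_{P_j}\}_{P\in T}$ are pairwise disjoint. This is false: any two tiles $P,P'\in T$ with $|I_P|=|I_{P'}|$ have $\omega_{P_j}=\omega_{P'_j}$ — within a $j$-lacunary tree, tiles at the same scale are distinguished only by spatial location. The single-tree Bessel inequality is true, but its proof must combine Plancherel across scales with spatial almost-orthogonality of the $\psi_{P,j}$ within each scale; pure Fourier-support disjointness does not deliver it, and consequently the identity $\sum_{P\in T}|\langle f,\psi_{P,j}\rangle|^2\lesssim\|\Pi_T f\|_2^2$ is not justified by the argument you give.

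Second, and more seriously, the source you assign to the $\log(1+L)$ factor is wrong. You claim that $\|\sum_{T\in\T}1_{I_T}\|_\infty\le L$ permits a partition of $\T$ into $O(\log(1+L))$ sub-collections each with pairwise disjoint (or $O(1)$-overlapping) tops. This is impossible in general: consider $L$ nested dyadic intervals $I_1\subsetneq I_2\subsetneq\cdots\subsetneq I_L$ as tree tops, a configuration that saturates the overlap hypothesis. Any sub-collection with $O(1)$ overlap contains only $O(1)$ of these, so $\Omega(L)$ layers are required — interval graphs are perfect, hence the chromatic number equals the clique number $L$. Minkowski over $\Omega(L)$ layers gives a factor of $L$ (or at best $\sqrt L$ if the layers were orthogonal), not $\log(1+L)$. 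The logarithm in this Bessel inequality does not come from a coarse geometric layering of $\T$; it comes from a Schur-test/Cotlar--Stein summation of wave-packet tail interactions across scales, where the overlap bound $\|\sum_T 1_{I_T}\|_\infty\le L$ together with the four conditions of strong $j$-disjointness (notably the spatial separation forced by frequency nesting) controls the number of scales a fixed tile can effectively interact with. That finer bookkeeping is what is missing, and it is precisely the content of \cite[Proposition 13.1]{dtt2008}.
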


\section{A variation-norm multiplier estimate}\label{s.varnorm-multiplier}

In this section we consider a variation-norm version of Bourgain \cite[Lemma 4.11]{bourgain1990}, namely Theorem~\ref{t.bourgain-varnorm} below. In the following, let $\xi_1 < \ldots < \xi_N$ be real numbers. For each integer $k$ we denote the sharp multi-frequency projection at scale $k$ onto $\xi_1,\dots, \xi_N$  by  $\Pi_k[f] = \mathcal F^{-1}[1_{R_k}\widehat{f}]$, where  $R_k = \bigcup_{\xi \in \Omega} (\xi - 2^{-k}, \xi +2^{-k})$.
\begin{theorem} \label{t.bourgain-varnorm}
For every $r>2$ and  $\epsilon > 0$ it holds that
\begin{eqnarray*}
\|\Pi_k[f](x)\|_{L^2_x(V^r_k)} &\le& C_{r,\epsilon} N^{\epsilon} \|f\|_{L^2} \ \ .
\end{eqnarray*}
\end{theorem}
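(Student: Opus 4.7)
The plan is to adapt the argument of Nazarov--Oberlin--Thiele \cite{not2010} (originally devised for smooth multi-frequency Fourier projections) to the case of sharp cutoffs, at the cost of an $N^{\epsilon}$ loss. The three main ingredients will be: (i) the nesting $R_{k+1}\subset R_k$ together with Plancherel's theorem to secure Fourier-support disjointness for consecutive differences; (ii) the Rademacher--Menshov-type bound (Lemma~\ref{vnrmlemma}) to pass from $L^2$ random-sign bounds to $V^2$ bounds for partial sums; and (iii) a short/long decomposition of the variation norm in the spirit of Jones--Seeger--Wright to convert $V^2$ control over short blocks into $V^r$ control over all of $\Z$ for $r>2$.

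First I would fix an arbitrary strictly increasing finite sequence $k_0<k_1<\dots<k_M$ of scales and set $g_m:=\Pi_{k_{m-1}}f-\Pi_{k_m}f$. Since the radii $2^{-k}$ shrink as $k$ grows, the family $(R_k)$ is nested-decreasing, so the Fourier supports $R_{k_{m-1}}\setminus R_{k_m}$ of the $g_m$ are pairwise disjoint. Plancherel then yields
\begin{eqnarray*}
\Big\|\sum_{m=1}^M \epsilon_m\, g_m\Big\|_{L^2}^2 &=& \sum_{m=1}^M \|g_m\|_{L^2}^2 \;\le\; \|f\|_{L^2}^2
\end{eqnarray*}
uniformly in signs $\epsilon_m\in\{\pm 1\}$. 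Feeding this into Lemma~\ref{vnrmlemma} will give, for each such fixed sequence,
\begin{eqnarray*}
\|(\Pi_{k_n}f)_{n=0}^{M}\|_{L^2_x(V^2_n)} &\lesssim& (1+\log M)\,\|f\|_{L^2}.
\end{eqnarray*}

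To upgrade this to a $V^r$ bound (for $r>2$) over all $k\in\Z$, I would use a short/long decomposition of the form $\|a_k\|_{V^r_k(\Z)} \lesssim \|(a_{2^n})_{n\in\Z}\|_{V^r_n} + (\sum_n \|a_k\|_{\widetilde V^2_k([2^n,2^{n+1}])}^2)^{1/2}$. The short-variation piece splits into dyadic blocks of length $2^n$, on each of which the Rademacher--Menshov estimate above contributes at most a $(1+n)\,\|f\|_{L^2}$ factor, and the resulting $\ell^2$-sum in $n$ converges. The long-variation piece is the $V^r$-norm of a sparse dyadic subsequence, which we bound by the same Rademacher--Menshov mechanism, paying at most the logarithm of the effective range of scales at which $\Pi_k f$ varies nontrivially.

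The main obstacle is that this effective range can \emph{a priori} be much larger than $\log N$: for pathological $\Omega$, the projections $\Pi_k f$ may vary nontrivially across a range of scales comparable to $\log_2(\mathrm{diam}(\Omega)/\min\mathrm{gap}(\Omega))$, which is unbounded in terms of $N$ alone. To overcome this, I would exploit the combinatorial structure of $R_k$: as $k$ varies, the number of connected components of $R_k$ is monotone in $k$ and takes values in $\{1,\dots,N\}$, so it changes at most $N-1$ times. This partitions $\Z$ into at most $N$ ``phases'' during which $R_k$ is a union of a fixed number of intervals with linearly-moving endpoints; within a phase the situation effectively reduces to the smooth multi-frequency setting of \cite{not2010} and yields a polylogarithmic-in-$N$ bound. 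Summing across the $O(N)$ phases in the appropriate $\ell^r$-sense, and using $r>2$ to absorb the polylog factors, one should arrive at the claimed $C_{r,\epsilon}N^{\epsilon}\|f\|_{L^2}$ bound. Keeping careful track of these logarithmic losses so that they remain dominated by $N^{\epsilon}$ for any prescribed $\epsilon>0$ will be the technically most delicate part.
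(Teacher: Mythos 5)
Your high-level plan is in the right direction: Rademacher--Menshov for $V^2$ over sparse index sets, nested Fourier supports and Plancherel for orthogonality, and the combinatorial observation that the number of connected components of $R_k$ can change at most $N-1$ times (this last point is exactly the mechanism the paper uses). However, two of your steps have genuine gaps.

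First, the intermediate dyadic short/long decomposition of $V^r_{k\in\Z}$ is a detour that does not close. On a dyadic block of length $2^n$ the Rademacher--Menshov bound costs a factor $\sim n$, and even though the responsible pieces $f^{(n)}=\mathcal F^{-1}[(1_{R_{2^n}}-1_{R_{2^{n+1}}})\widehat f]$ are orthogonal, the resulting quantity $(\sum_n n^2\|f^{(n)}\|_{L^2}^2)^{1/2}$ is not controlled by $N^{\epsilon}\|f\|_{L^2}$: there is nothing restricting $n$ to a range of size comparable to a power of $N$, and the $n^2$ weight can overwhelm the $\ell^2$-normalization. The paper avoids this entirely: after reducing to $k\in[a,b]$ by monotone convergence, it picks the phase-transition scales $a=k_0\le\cdots\le k_N=b$ (at most $N+1$ of them) \emph{as the partition} and writes
\[
\|\Pi_k f\|_{V^r_{k\in[a,b]}} \lesssim \|\Pi_{k_j}f\|_{V^r_{j\in[0,N]}} + \Bigl(\sum_{j=1}^N\|\Pi_k f\|_{V^r_{k\in[k_{j-1},k_j)}}^2\Bigr)^{1/2},
\]
so the ``long'' piece has at most $N+1$ terms (giving $\log N$ via Proposition~\ref{bourgainrmprop}) and the ``short'' pieces see orthogonal slices of $f$. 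You should decompose along the phase boundaries directly rather than along dyadic $k$.

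Second, the sentence ``within a phase the situation effectively reduces to the smooth multi-frequency setting of \cite{not2010}'' hides the real technical content and, as stated, is not a proof. Within a phase the number $M$ of components is constant, but some components contain several $\xi_j$'s, so after rescaling you do \emph{not} yet have a family of well-separated single frequencies. The paper handles this by splitting the component indices into $L^{\sharp}$ (components with a single $\xi$) and $L^{\flat}$ (components with $\ge 2$ points), applying the frequency-separated case to $\Pi^{\sharp}$ after rescaling by $2^{k_{j-1}}$, and for the $L^{\flat}$ part decomposing $\Pi^{\flat}_k[f]=g+\Pi^{\flat,1}_k[h_1]+\Pi^{\flat,2}_k[h_2]$, where $g$ (the projection onto the stable inner part of each component) is constant in $k$ and $\Pi^{\flat,1},\Pi^{\flat,2}$ (the shrinking boundary windows around $\zeta_\ell=\min(I_{\ell,k_{j-1}}\cap\Omega)$ and $\rho_\ell=\max(I_{\ell,k_{j-1}}\cap\Omega)$) are again frequency-separated after the same rescaling. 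The frequency-separated case itself rests on the exponential-sum estimate (Lemma~\ref{l.exponentialsum}) and the averaging argument of Proposition~\ref{p.bourgain-varnorm-sep}, followed by a square-function step to pass from smooth to sharp cutoffs; these pieces need to appear explicitly in your argument. Without the $L^{\sharp}/L^{\flat}$ split and the $g+\Pi^{\flat,1}+\Pi^{\flat,2}$ decomposition, the ``within-phase'' reduction is a gap.
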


 A variant of Theorem~\ref{t.bourgain-varnorm} with  smooth multi-frequency projections was considered in \cite{not2010}, where a range of $L^p$ estimates was obtained; for the current paper we need sharp  frequency projections, but $L^2$ is sufficient.

The starting point of our proof is Lemma 3.2 from \cite{not2010}:
\begin{lemma}  \label{l.exponentialsum}
Suppose that  $\{c_k\}_{k=0}^{\infty}$ is a sequence in $\rea^N$, and $2 < q < r$. Then
\begin{eqnarray*}
\|   \sum_{j=1}^N c_{k,j} {\bf e} ^{2 \pi {\bf i} \xi_j y} \|_{L^2_{y\in[0,1]}(V^r_{k\ge 0})}  &\le& C N^{(\frac{1}{2} - \frac{1}{q})\frac{r}{r-2}}\|c_k\|_{V^q_{k\ge 0}(\ell^2(\rea^N))}
\end{eqnarray*}
where $C$ may depend on $r,q$ and $\min_j |\xi_j - \xi_{j-1}|.$
\end{lemma}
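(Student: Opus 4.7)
The strategy is to interpolate between two endpoint estimates along the parameter $q$: one at $q = 2$ with no loss in $N$ (where the exponent $(1/2 - 1/q)r/(r-2)$ vanishes), and a second corresponding to trivial pointwise control with $N^{1/2}$ loss. The key structural input is the frequency-gap hypothesis, which through Ingham's inequality yields the Bessel-type equivalence $\|F_k - F_{k'}\|_{L^2([0,1])} \sim \|c_k - c_{k'}\|_{\ell^2}$ for $F_k(y) := \sum_j c_{k,j} \mathbf{e}^{2\pi\mathbf{i}\xi_j y}$, with constants depending only on $\min_j|\xi_j - \xi_{j-1}|$. This identifies the map $c \mapsto F$ with an abstract $\ell^2$-valued sequence in the Hilbert space $L^2([0,1])$ and transfers variational questions back and forth at the $L^2$ level.

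At the $q=2$ endpoint I would work at the level of jump counts. For each $\lambda > 0$ and $y \in [0,1]$ let $N_y(\lambda)$ be the maximal number of disjoint intervals $[n_j(y), m_j(y)]$ on which the sequence $(F_k(y))_k$ makes a jump of size at least $\lambda$. After a measurable selection of these intervals, a Bessel-type argument combined with a Rademacher--Menshov decomposition in the spirit of Lemma~\ref{vnrmlemma} yields $\lambda^2 \int_0^1 N_y(\lambda)\, dy \lesssim (\log K)^2 \|c_k\|_{V^2_k(\ell^2)}^2$, where $K$ is the (finite) sequence length under consideration. The logarithmic factor can be absorbed by running the argument with an intermediate exponent $r' \in (2, r)$ and exploiting the slack in $r > 2$ when summing over dyadic levels of $\lambda$. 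The output is the sharp bound $\|F_k(y)\|_{L^2_y(V^r_k)} \lesssim \|c_k\|_{V^2_k(\ell^2)}$ with no $N$-dependence.

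The other endpoint is trivial: Cauchy--Schwarz gives $|F_k(y) - F_{k'}(y)| \le \sqrt{N}\|c_k - c_{k'}\|_{\ell^2}$ pointwise in $y$, hence $\|F_k(y)\|_{L^\infty_y(V^q_k)} \le \sqrt{N}\|c_k\|_{V^q_k(\ell^2)}$ for any $q$. To interpolate, one combines the two jump-count estimates $N_y(\lambda) \lesssim \min\bigl((\|c_k\|_{V^2(\ell^2)}/\lambda)^2,\ (N^{1/2}\|c_k\|_{V^q(\ell^2)}/\lambda)^q\bigr)$, inserts the minimum into the standard relation $\|a\|_{V^r}^r \lesssim \int_0^\infty \lambda^{r-1} N(a,\lambda)\, d\lambda$, integrates in $y$, and balances the two regimes at the critical $\lambda$ where the two bounds coincide. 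A short calculation with the resulting geometric series (using $r > 2$) produces exactly the exponent $(1/2 - 1/q)r/(r-2)$.

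The main obstacle will be the $q = 2$ endpoint, because the disjoint intervals realizing $N_y(\lambda)$ depend measurably on $y$ and so cannot be integrated term-by-term against the Bessel inequality. A Rademacher--Menshov-style dyadic expansion of partitions bypasses this at the price of a $\log K$ factor, which cannot survive in the final estimate (the claimed bound at $q=2$ has no $N^\varepsilon$ slack). Absorbing this factor using the strict inequality $r > 2$, by running the argument at a slightly smaller $r' \in (2,r)$ and summing a geometric series in the dyadic jump-level, is the delicate point where the hypothesis $r > 2$ is actually used.
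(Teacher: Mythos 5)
The paper does not prove this lemma; it imports it verbatim from Nazarov--Oberlin--Thiele (cited as \cite{not2010}, Lemma~3.2). So there is no in-paper argument to compare against, and the question is whether your sketch would stand on its own. I do not think it does, for two reasons.

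First, the balancing step does not produce the claimed exponent. Write $A=\|c\|_{V^2(\ell^2)}$ and $B=N^{1/2}\|c\|_{V^q(\ell^2)}$, so your two jump-count inputs are (after integrating the first in $y$) $\int_0^1 N_y(\lambda)\,dy\lesssim A^2\lambda^{-2}$ and, pointwise, $N_y(\lambda)\le B^q\lambda^{-q}$. Feed these into $\int_0^1\|F\|_{V^r}^r\,dy\lesssim\int_0^\infty\lambda^{r-1}\int_0^1 N_y(\lambda)\,dy\,d\lambda$. Because $r>q$, the large-$\lambda$ half of the integral, where only the pointwise bound is available, is dominated by its upper endpoint $\lambda\approx B$ and contributes a term of size $B^r$. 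The small-$\lambda$ half (where the $V^2$ bound applies) is dominated by the crossover scale and contributes $A^2\lambda_*^{r-2}$ with $\lambda_*=(B^q/A^2)^{1/(q-2)}$. Neither regime recovers $(1/2-1/q)\,r/(r-2)$: the large-$\lambda$ piece just returns the trivial $N^{1/2}$ bound, and the crossover piece carries a factor of $\|c\|_{V^2}$ that cannot be removed (indeed $\|c\|_{V^2}\ge\|c\|_{V^q}$, so it cannot be traded for $\|c\|_{V^q}$). A simple ``min then balance'' of these two inputs does not converge to the target estimate, which involves only $\|c\|_{V^q}$.

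Second, the claimed $q=2$ endpoint with no $N$-loss is not cleanly established. The Rademacher--Menshov decomposition (Lemma~\ref{vnrmlemma} here) gives a $\log K$ loss where $K$ is the \emph{length of the index sequence} $k$, which is a priori infinite and in any case independent of $N$ and of the jump threshold $\lambda$. Running the argument at a smaller $r'\in(2,r)$ and summing a geometric series in the dyadic $\lambda$-levels exploits the slack $r>2$ to control the summation over $\lambda$, but this mechanism does not touch a multiplicative $\log K$ factor: that factor sits uniformly in front of every $\lambda$-level. To remove it one needs a genuine jump/Lépingle-type argument adapted to the quasi-orthogonal increments $F_k-F_{k-1}$, not a log absorption.

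The missing idea, which appears both in \cite{not2010} and implicitly in the role this lemma plays in the averaging argument of Proposition~\ref{p.bourgain-varnorm-sep}, is a decomposition of the sequence $c$ itself (for example a greedy stopping time based on the $\ell^2$-displacement of $c$), together with the sharper $y$-averaged estimate $\|F_k-F_{k'}\|_{L^q_y([0,1])}\lesssim N^{1/2-1/q}\|c_k-c_{k'}\|_{\ell^2}$ obtained by interpolating pointwise Cauchy--Schwarz against the Bessel bound. That is where the exponent $1/2-1/q$ originates, and that piece of information never enters your sketch. I would recommend consulting the proof of Lemma~3.2 in \cite{not2010} directly rather than attempting to reconstruct it from the two endpoint observations.
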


Through a standard averaging argument (see e.g. the proof of Proposition 4.1 in \cite{not2010}), the lemma above gives

\begin{proposition} \label{p.bourgain-varnorm-sep}
Let $\chi$ be a smooth function such that $\widehat \chi$ is identically one on $[-0.9,0.9]$ and supported on $[-1,1]$. Assume that  $\xi_{j+1} \ge \xi_j + 1$ for each $j$, and let $\chi_{k,j}$ be defined by $\widehat \chi_{k,j}(\xi)=\chi(2^k(\xi-\xi_j))$. Then for $r>2$ and $\epsilon>0$ it holds that
\begin{eqnarray*}
\| \sum_{1\le j \le N} (\chi_{k,j}\ast f)(x)\|_{L^2_x(V^r_{k \geq 0})} &\le& C_{r,\epsilon,\chi} N^{\epsilon}\|f\|_{L^2}
\end{eqnarray*}
\end{proposition}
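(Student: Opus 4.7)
The plan is to reduce the estimate to Lemma~\ref{l.exponentialsum} by an ``averaging'' argument, and then to control the resulting coefficient sequence using Minkowski plus the L\'epingle inequality (Lemma~\ref{l.lepingle}).

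First, I expose the exponential-sum structure. A direct computation (change of variables on the Fourier side) gives
\begin{eqnarray*}
\sum_{j=1}^N (\chi_{k,j}*f)(x) &=& \sum_{j=1}^N e^{2\pi i \xi_j x}\, F_{k,j}(x),\qquad \widehat{F_{k,j}}(\lambda)\;=\;\chi(2^k\lambda)\,\widehat f(\lambda+\xi_j),
\end{eqnarray*}
so $F_{k,j}=\zeta_k*h_j$ for $\zeta_k(x)=2^{-k}\zeta(2^{-k}x)$ (with $\widehat\zeta=\chi$) and $h_j(x)=e^{-2\pi i\xi_j x}f(x)$. Crucially, $\widehat{F_{k,j}}$ is (up to Schwartz tails) concentrated in $|\lambda|\lesssim 2^{-k}\le 1$ for $k\ge 0$, so $F_{k,j}$ is essentially constant at unit scale in $x$.

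Second, I freeze $F_{k,j}$ on unit intervals to bring Lemma~\ref{l.exponentialsum} to bear. Partition $\mathbb R=\sqcup_n [n,n+1)$, and write, using a Whittaker--Shannon-type interpolation adapted to the near-bandlimited property,
\begin{eqnarray*}
F_{k,j}(x) &=& \sum_{\ell\in\mathbb Z} F_{k,j}(\ell)\,\psi(x-\ell),
\end{eqnarray*}
where $\psi$ is a fixed Schwartz function (the Schwartz tails of $\widehat\chi$ produce a negligible $O(N^{-M}\|f\|_{L^2})$ error for any fixed $M$ and may be discarded). For $x=n+y$, $y\in[0,1)$, this turns the inner sum into a $\psi$-weighted sum, indexed by $\ell$, of exponential sums $\sum_j c_{k,j}^{(\ell,n)} e^{2\pi i \xi_j y}$ with $c_{k,j}^{(\ell,n)}=F_{k,j}(\ell)e^{2\pi i \xi_j n}$ \emph{independent of $y$}. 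Fixing any $2<q<r$ and applying Lemma~\ref{l.exponentialsum} on each $[0,1)$, then triangle-ing in $\ell$ (using rapid decay of $\psi$), summing in $n$, and applying Minkowski's inequality $V^q_k(\ell^2_j)\le \ell^2_j(V^q_k)$ (valid for $q\ge 2$), one arrives at
\begin{eqnarray*}
\Big\|\sum_{j=1}^N(\chi_{k,j}*f)\Big\|_{L^2_x(V^r_k)} &\lesssim& N^{(\tfrac12-\tfrac1q)\frac{r}{r-2}}\Big(\sum_{j=1}^N \|F_{k,j}\|_{L^2_x(V^q_k)}^2\Big)^{1/2}.
\end{eqnarray*}

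Third, since $F_{k,j}=\zeta_k*h_j$, Lemma~\ref{l.lepingle} with $s=2$ gives $\|F_{k,j}\|_{L^2_x(V^q_k)}\lesssim \|Ph_j\|_{L^2}$, where $P$ is a fixed Schwartz frequency localizer to $|\lambda|\lesssim 1$ (absorbing the essential support of $\widehat\zeta=\chi$). The frequency separation $\xi_{j+1}-\xi_j\ge 1$ forces the $\widehat{Ph_j}$'s to be nearly disjointly supported, so Plancherel yields $\sum_j \|Ph_j\|_{L^2}^2\lesssim \|f\|_{L^2}^2$. Given $\epsilon>0$, I pick $q>2$ so close to $2$ that $(\tfrac12-\tfrac1q)\frac{r}{r-2}<\epsilon$ (possible for any fixed $r>2$), obtaining the claimed bound.

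The main obstacle is making the freezing step clean: because $F_{k,j}$ is band-limited only up to Schwartz tails, the Whittaker--Shannon reconstruction is not exact, and one must carry the tail errors through the double sum in $\ell$ and $n$. This is routine but tedious; the saving grace is that those tails decay faster than any polynomial in their arguments, so they contribute at most a $N^{-M}$ remainder and never compete with the $N^\epsilon$ loss.
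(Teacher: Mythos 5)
Your proposal is a genuinely different route from the paper's. The paper follows the ``standard averaging argument'' of Nazarov--Oberlin--Thiele: it introduces an auxiliary translation variable $y$, exploits that modulating and translating $g_j$ by a small amount is cheap because $\widehat g_j$ is localized near $\xi_j$, averages over $y\in[0,\delta]$, applies Fubini, and closes the argument with a bootstrap inequality $M\le C_{\epsilon}N^{\epsilon}+\tfrac12 M$ for the \emph{a priori} finite best constant $M$. You instead argue directly: expose the modulation structure $\sum_j(\chi_{k,j}*f)=\sum_j e^{2\pi i\xi_j x}F_{k,j}(x)$ and discretize the slowly-varying factors $F_{k,j}$ by sampling, so that Lemma~\ref{l.exponentialsum} can be applied on each unit interval with $y$-independent coefficients. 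Both routes funnel through the same three ingredients --- Lemma~\ref{l.exponentialsum}, the Minkowski inequality $V^q_k(\ell^2_j)\le\ell^2_j(V^q_k)$, and Lemma~\ref{l.lepingle} --- but the freezing mechanism differs (averaging plus bootstrap versus sampling plus a direct estimate), and yours avoids the bootstrap entirely.

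That said, there is a substantive step you pass over silently. After applying Lemma~\ref{l.exponentialsum} on each unit cell and Minkowski in $j$, what falls out is the \emph{discrete} quantity $\sum_{\ell}\sum_j\|F_{k,j}(\ell)\|_{V^q_k}^2$, and you need to replace it by $\sum_j\|F_{k,j}\|_{L^2_x(V^q_k)}^2$ before Lemma~\ref{l.lepingle} can be used. This is not Minkowski and not part of the Schwartz-tail error you flag; it is a vector-valued sampling inequality and requires its own argument --- e.g.\ write $F_{k,j}(\ell)=(F_{k,j}*\mathcal D)(\ell)$ for a reproducing kernel $\mathcal D$ with $\widehat{\mathcal D}=1$ on the union over $k\ge0$ of the (essential) supports of $\widehat{F_{k,j}}$, bound $\|F_{k,j}(\ell)\|_{V^q_k}\le\int|\mathcal D(\ell-y)|\,\|F_{k,j}(y)\|_{V^q_k}\,dy$, and use Cauchy--Schwarz together with $\sum_{\ell}|\mathcal D(\ell-y)|\lesssim1$. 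This is precisely the point that the paper's averaging-plus-bootstrap device is designed to sidestep, which is why the paper's version reads more cleanly. Two smaller remarks: since $\widehat{F_{0,j}}$ lives essentially on $[-1,1]$, spacing-$1$ samples alias (you should oversample, say at $\ell\in\frac14\mathbb Z$, and the frequency gap $\xi_{j+1}-\xi_j\ge1$ still leaves Lemma~\ref{l.exponentialsum} applicable on the shorter interval); and the reconstruction error coming from the Schwartz tails of $\chi$ is $O(\|f\|_{L^2})$ --- controlled by summing $2^{-kM}$ over $k\ge0$ after exploiting frequency separation in $j$ --- rather than $O(N^{-M}\|f\|_{L^2})$ as you state. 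The wrong rate is harmless here, but it indicates the error bookkeeping was not actually carried out.
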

\proof To keep the paper self-contained, we sketch the averaging argument. Let $f_j(x) = \mathcal F^{-1}[1_{|\xi-\xi_j|\le 1} \widehat f(\xi)](x)$. Since $\xi_j$'s are separated, we have $\|f\|_2 \approx \|(f_j)\|_{L^2(\ell^2_j)}$. Let $M$ be the best constant such that if $supp(\widehat g_j)\subset [\xi_j-1, \xi_j+1]$ for all $j$ then
\begin{eqnarray*}
\|\sum_{1\le j \le N}  (\chi_{k,j}  \ast g_j)(x)\|_{L^2_x(V^r_{k\ge 0})} &\le& M \|(g_j)\|_{L^2_x(\ell^2_j)} \ \ ,
\end{eqnarray*}
by the triangle inequality and Lemma~\ref{vnrmlemma} it is clear that $M=O_{\chi,r}(N)<\infty$. Our aim is to show that $M=O_{\epsilon,r,\chi}(N^\epsilon)$. Since $\widehat g_j$ is supported on $[\xi_j-1,\xi_j+1]$, for $|y|$ small we have
\begin{eqnarray*}
\| g_j(x)  - \e^{2\pi \i \xi_j y} g_j(x-y)\|_{L^2_x(\ell^2_j)} &\lesssim& |y| \|g_j\|_{L^2_x(\ell^2_j)} \ \ .
\end{eqnarray*}
Averaging over $0\le y\le \delta$ with $1 \lesssim \delta<1$ sufficiently small, we obtain
\begin{eqnarray*}
&&\|\sum_{1\le j \le N} (\chi_{k,j}\ast g_j)(x)\|_{L^2_x(V^r_{k\ge 0})} \\
&\le& C_\delta \|\sum_{1\le j \le N} \e^{2\pi \i \xi_j y} (\chi_{k,j}\ast g_j)(x-y)\|_{L^2_{y\in [0,1]}(L^2_x(V^r_{k\ge 0}))} + \frac {M}2 \|g_j\|_{L^2_x(\ell^2_j)}\\
&=& C_\delta \|\sum_{1\le j \le N} \e^{2\pi \i \xi_j y} (\chi_{k,j}\ast g_j)(x)\|_{L^2_x(L^2_{y\in [0,1]}(V^r_{k\ge 0}))} + \frac {M}2 \|g_j\|_{L^2_x(\ell^2_j)}
\end{eqnarray*}
using translation invariant and Fubini. Using Lemma~\ref{l.exponentialsum} for each fixed $x$ it follows that for $q >2$ sufficiently close to $2$ (depending on $\epsilon>0$) we have
\begin{eqnarray*}
\|\sum_{1\le j \le N} (\chi_{k,j}\ast g_j)(x)\|_{L^2_x(V^r_{k\ge 0})} 
&\le& C_{\epsilon, q, r} N^\epsilon \| (\chi_{k,j}\ast g_j)(x)\|_{L^2_x(V^q_k(\ell^2_j))} + \frac {M}2 \|g_j\|_{L^2_x(\ell^2_j)}\\
&\le& C_{\epsilon, q, r} N^\epsilon  \| (\chi_{k,j}\ast g_j)(x)\|_{\ell^2_j(L^2_x(V^q_k))} + \frac {M}2 \|g_j\|_{L^2_x(\ell^2_j)}\\
&\le& C_{\epsilon, q, r,\chi} N^\epsilon \|g_j\|_{\ell^2_j(L^2_x)} + \frac{M}2 \|g_j\|_{L^2_x(\ell^2_j)} \ \ ,
\end{eqnarray*}
in the second estimate we used $q>2$ and in the last estimate we used Lemma~\ref{l.lepingle}. Since this holds for arbitrary $(g_j)$ satisfying $supp(\widehat g_j)\subset [\xi_j-1,\xi_j+1]$, by definition of $M$ we obtain $M \le C_{\chi,\epsilon, q, r} N^\epsilon  + \frac M 2$, therefore $M = O_{\epsilon,r,\chi}(N^\epsilon)$ as desired. \endproof

Now, using Proposition \ref{p.bourgain-varnorm-sep} and  a simple square function argument, we obtain a frequency separated version of Theorem~\ref{t.bourgain-varnorm}. Namely, if  $\xi_{j+1} \geq \xi_j + 1$ for each $j$ then for $r>2$ and $\epsilon>0$ it holds that
\begin{eqnarray*}
\| \Pi_k[f]\|_{L^2_x(V^r_{k \geq 0})} &\le& C_{r,\epsilon} N^{\epsilon} \|f\|_{L^2} \ \ .
\end{eqnarray*}

To remove the frequency separation requirement $\xi_{j+1} \geq \xi_{j} + 1$, we will need the following estimate, which will be proved using  Lemma \ref{vnrmlemma}. 

\begin{proposition} \label{bourgainrmprop}
Suppose that $S$ is a finite set of integers. Then
\begin{eqnarray} \label{bourgainrm}
\| \Pi_k[f]\|_{L^2_x(V^2_{k \in S})} &\le& C (1 + \log(|S|)) \|f\|_{L^2}
\end{eqnarray}
\end{proposition}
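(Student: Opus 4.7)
The plan is to reduce to the Rademacher--Menshov estimate (Lemma~\ref{vnrmlemma}) via a telescoping argument that exploits the nested structure of the frequency sets $R_k$.

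First, I would enumerate $S = \{k_1 < k_2 < \cdots < k_N\}$ with $N = |S|$. Since the $V^2$ norm depends only on the ordered sequence of values, $\|\Pi_k[f]\|_{L^2_x(V^2_{k\in S})}$ equals the $V^2$-norm (in $n$) of $(\Pi_{k_n}[f])_{n=1}^N$. Telescoping, I set $g_1 := \Pi_{k_1}[f]$ and $g_n := \Pi_{k_n}[f] - \Pi_{k_{n-1}}[f]$ for $2 \le n \le N$, so that $\Pi_{k_n}[f] = \sum_{j=1}^n g_j$. The task becomes verifying the random-sign hypothesis of Lemma~\ref{vnrmlemma} for $(g_j)$ with $B = O(\|f\|_{L^2})$.

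Verifying the random-sign $L^2$ bound is the crux. The key observation is that, since $k_n > k_{n-1}$ shrinks the intervals $(\xi - 2^{-k}, \xi + 2^{-k})$, the sets are nested: $R_{k_n} \subset R_{k_{n-1}}$. Hence $\widehat{g_n} = -\mathbf{1}_{R_{k_{n-1}} \setminus R_{k_n}} \widehat f$ for $n \ge 2$, and $\widehat{g_1} = \mathbf{1}_{R_{k_1}} \widehat f$. The annular supports $R_{k_{n-1}} \setminus R_{k_n}$ for $n \ge 2$ are pairwise disjoint and are all contained in the support $R_{k_1}$ of $\widehat{g_1}$. Consequently at any frequency $\xi$ at most two of the $\widehat{g_j}(\xi)$ are nonzero (namely $j=1$, and possibly the unique $j \ge 2$ with $\xi \in R_{k_{j-1}} \setminus R_{k_j}$), and each nonzero value is $\pm \widehat f(\xi)$. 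So for any signs $\epsilon_j \in \{\pm 1\}$,
$$\Big|\sum_{j=1}^N \epsilon_j \widehat{g_j}(\xi)\Big| \le 2\,|\widehat f(\xi)|,$$
and Plancherel gives $\|\sum_j \epsilon_j g_j\|_{L^2} \le 2\|f\|_{L^2}$.

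With $B := 2\|f\|_{L^2}$, Lemma~\ref{vnrmlemma} immediately yields
$$\Big\|\sum_{j=1}^n g_j(x)\Big\|_{L^2_x(V^2_n)} \lesssim (1 + \log N)\,\|f\|_{L^2},$$
which is the desired \eqref{bourgainrm}. I do not anticipate any serious obstacle: the only substantive input is that the nested structure of the $R_k$'s makes the telescoped differences essentially frequency-disjoint (up to the single overlap with $g_1$), which is exactly what is needed to obtain an $N$-uniform random-sign $L^2$ bound. The logarithmic loss is then intrinsic to Rademacher--Menshov and matches the stated bound.
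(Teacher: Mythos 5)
Your proof is correct and takes essentially the same approach as the paper: both telescope the projections and invoke Lemma~\ref{vnrmlemma}. The only (inessential) difference is that the paper sets $f_j := \mathcal F^{-1}[(1_{R_{s_j}} - 1_{R_{s_{j+1}}})\widehat f]$ for $j<n$ and $f_n := \mathcal F^{-1}[1_{R_{s_n}}\widehat f]$, so that $\Pi_{s_k}[f] = \sum_{j\ge k} f_j$ with the pieces pairwise orthogonal in frequency (giving $B = \|f\|_{L^2}$), whereas your choice $g_1 = \Pi_{k_1}[f]$ has Fourier support overlapping every later piece, which you correctly compensate for with the factor-$2$ bounded-overlap argument.
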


\begin{proof}
Let $n=|S|$. Let $s_1< \ldots< s_n$ be elements of $S$. For $j =1, \ldots, n - 1$ write
\begin{eqnarray*}
f_j = \mathcal F^{-1}[(1_{R_{s_j}} - 1_{R_{s_{j+1}}})\widehat{f}]
\end{eqnarray*}
and let $f_n = \mathcal F^{-1}[1_{R_{s_n}}\widehat{f}]$. Then, the $f_j$ are orthogonal in $L^2(\R)$, and $\Pi_{s_k}[f] = \sum_{j \geq k} f_{j}$, 
so Lemma \ref{vnrmlemma} gives \eqref{bourgainrm}.
\end{proof}

\begin{proof} [Proof of Theorem \ref{t.bourgain-varnorm}]
By monotone convergence, it suffices to prove
\begin{eqnarray*}
\|\Pi_k[f](x)\|_{L^2_x(V^r_{k \in [a,b]})} &\le& C_{r,\epsilon} N^{\epsilon} \|f\|_{L^2}
\end{eqnarray*}
for every  finite interval $[a,b]$, provided that the constant is independent of $[a,b].$ 

Now, we may choose $\{k_j\}_{j=0}^N$ with 
\begin{eqnarray*}
a = k_0 \le \ldots \le k_N = b
\end{eqnarray*}
so that the number of connected components of $R_k$ is constant on each interval $[k_j, k_{j+1}).$
Then, for each $x$
\begin{eqnarray*}
\|\Pi_k[f](x)\|_{V^r_{k \in [a,b]}} &\le& C \|\Pi_{k_j}[f](x)\|_{V^r_{j \in [0,N]}} + C\left( \sum_{j = 1}^{N}\|\Pi_k[f](x)\|^2_{V^r_{k \in [k_{j-1},k_j)}} \right)^{1/2}.
\end{eqnarray*}
The contribution to the $L^2_x$ norm of the first term on the right above is acceptable by Proposition \ref{bourgainrmprop}. Furthermore, for each $k,k' \in [k_{j-1}, k_j)$ we have
\begin{eqnarray*}
\Pi_{k}[f](x) - \Pi_{k'}[f](x) = \Pi_{k}[f_j](x) - \Pi_{k'}[f_j](x)
\end{eqnarray*}
where $f_j = \mathcal F^{-1}[(1_{R_{k_{j-1}}} - 1_{R_{k_j}})\widehat{f}]$. Thus, using the orthogonality of the $f_j$ it suffices to show that, for each $1\le j\le N$, 
\begin{eqnarray*}
\|\Pi_k[f](x)\|_{L^2_x(V^r_{k \in [k_{j-1,k_j})})} &\le& C_{r,\epsilon} N^{\epsilon} \|f\|_{L^2}  \ \ .
\end{eqnarray*}

Fix $j$ and let $M$ be the (constant) number of connected components of $R_k$ for $k \in [k_{j-1},k_j)$, clearly $M\le N$. For $k \in [k_{j-1},k_j)$ we can write $R_k$ as the disjoint union of open intervals 
\begin{eqnarray*}
R_k = \bigcup_{1\le \ell \le M}I_{\ell,k}
\end{eqnarray*}
where $I_{\ell,k'} \subset I_{\ell,k}$ for $k' > k.$
Let $\Omega=\{\xi_1, \ldots, \xi_N \}$ and define  
\begin{eqnarray*}
L^{\sharp} &:=& \{\ell \in [1,M] : |I_{\ell,k_{j-1}} \cap \Omega| = 1\} \quad , \quad \Pi^{\sharp}_k[f] \quad :=\quad \sum_{\ell \in L^{\sharp}}\mathcal F^{-1}[1_{I_{\ell,k}}\widehat{f}] 
\end{eqnarray*}
and let $L^{\flat} = [1,M] - L^{\sharp}$ and define $\Pi^{\flat}$ analogously. Clearly $\Pi_k = \Pi^{\sharp}_k + \Pi^{\flat}_{k}$. Rescaling by a factor of $2^{k_{j-1}}$, an application of the known frequency-separated case immediately gives
\begin{eqnarray*}
\|\Pi^{\sharp}_k[f](x)\|_{L^2_x(V^r_{k \in [k_{j-1,k_j})})} &\le& C_{r,\epsilon} |L^{\sharp}|^{\epsilon} \|f\|_{L^2}
\end{eqnarray*} 
and so it remains to consider $\Pi^{\flat}.$ For each $\ell \in L^{\flat}$ let 
\begin{eqnarray*}
\zeta_{\ell} &=& \min(I_{\ell,k_{j-1}} \cap \Omega) \ \ , \ \ \rho_{\ell} \quad = \quad \max(I_{\ell,k_{j-1}} \cap \Omega)
\end{eqnarray*}
and $I'_{\ell} = (\zeta_\ell,\rho_\ell)$, so that 
$I_{\ell,k}= (\zeta_\ell - 2^{-k}, \zeta_\ell] \cup I'_\ell \cup [\rho_\ell, \rho_\ell + 2^{-k})$. Now, define
\begin{eqnarray*}
\Pi^{\flat,1}_k[f] &=& \sum_{\ell \in L^{\flat}} \mathcal F^{-1}[1_{(\zeta_\ell - 2^{-k}, \zeta_\ell + 2^{-k})}\widehat{f}] \\ 
\Pi^{\flat,2}_k[f] &=& \sum_{\ell \in L^{\flat}} \mathcal F^{-1}[1_{(\rho_\ell - 2^{-k}, \rho_\ell + 2^{-k})}\widehat{f}] \ \ .
\end{eqnarray*}
For $k\in [k_{j-1},k_j)$ we obtain the decomposition
\begin{eqnarray*}\Pi^{\flat}_k[f] &=& g + \Pi^{\flat,1}_k[h_1] + \Pi^{\flat,2}_k[h_2]
\end{eqnarray*}
where $g = \mathcal F^{-1} [\sum_{\ell \in L^{\flat}}I'_\ell \widehat{f}]$ (which stays the same under $\Pi^{\flat}$) and
\begin{eqnarray*}
h_1 \quad = \quad \mathcal F^{-1}[\sum_{\ell \in L^{\flat}}1_{(\zeta_\ell - 2^{-k_{j-1}}, \zeta_\ell]} \widehat{f}]  & , & h_2 \quad=\quad \mathcal F^{-1} [\sum_{\ell \in L^{\flat}}1_{[\rho_\ell,\rho_\ell + 2^{-k_{j-1}})} \widehat{f}]
\end{eqnarray*}

Rescaling by a factor of $2^{k_{j-1}}$, an application of the known frequency-separated case then gives
\begin{eqnarray*}
\|\Pi^{\flat,i}_k[h_i](x)\|_{L^2_x(V^r_{k \in [k_{j-1,k_j})})} &\le&  C_{r,\epsilon} |L^{\flat}|^{\epsilon} \|h_i\|_{L^2}
\end{eqnarray*} 
for $i=1,2$, finishing the proof.
\end{proof}

\section{Size and  a variation-norm size bound} \label{s.size}

We will use the following standard notion of size:

\begin{definition}
Let $j \in \{1,2,3\}$, $\P \subset \P_{\nu}$, and $f$ be a function on $\rea.$ Then
\begin{eqnarray*}
\size_j(\P,f) = \sup_{T \subset \P}\left(\frac{1}{|I_{T}|} \sum_{P \in T} |\<f,\phi_{P,j}\>|^2\right)^{1/2}
\end{eqnarray*} 
where the supremum is over all $j$-lacunary trees contained in $\P$ and where the functions $\phi_{P,j}$ are defined in Section \ref{s.discretization}.
\end{definition}

The aim of this section is to prove:

\begin{proposition} \label{p.sizebound}
Let $s > 1,$ $r > 2,$ and $\P \subset \P_{\nu}.$ Then for $j \in \{1,2,3\}$
\begin{eqnarray*}
\size_j(\P,f) &\lesssim_{r,s}& \sup_{P,P' \in \P} \ \  \sup_{I_P \subset I \subset I_{P'}} \left( \frac{1}{|I|} \int |f(x)|^s \widetilde \chi_{I}(x)^2 \ dx\right)^{1/s} 
\end{eqnarray*} 
where the inside supremum is over dyadic intervals.
\end{proposition}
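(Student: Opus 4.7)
The plan is a duality-plus-Bessel argument adapted to the scale-dependent coefficient $a_P(x)$ appearing in $\phi_{P,3}$. Fix a $j$-lacunary tree $T \subset \P$ with top $P_T$; by $\ell^2$ duality it suffices to show that for every sequence $(b_P)_{P \in T}$ with $\sum_P |b_P|^2 \le 1$ one has $|\<f,g\>| \lesssim |I_T|^{1/2}\,\mathcal{S}(f)$ with $g := \sum_{P \in T} \ov{b_P}\, \phi_{P,j}$, where $\mathcal{S}(f)$ denotes the right-hand side of the proposition.

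Since $T$ is $j$-lacunary, demodulating by $\e^{-2\pi \i \xi_T \cdot}$ with $\xi_T := c(\omega_{(P_T)_j})$ puts the Fourier supports of the $\psi_{P,j}$ into a lacunary system of annular shells indexed by the scales $|I_P|^{-1}$. Partitioning $T = \bigsqcup_k T_k$ by scale $|I_P|=2^k$, the partial sums $g_k := \sum_{P\in T_k} \ov{b_P}\, \psi_{P,j}$ have pairwise disjoint Fourier supports across $k$, hence are $L^2$-orthogonal; a single-scale Bessel inequality yields $\|g_k\|_{L^2}^2 \lesssim \sum_{P\in T_k}|b_P|^2$, which upgrades (using the high-order Schwartz decay of $\psi_{P,j}$ away from $I_P \subset I_T$) to the weighted bound $\int|g_k|^2 \widetilde\chi_{I_T}^{-2}\,dx \lesssim \sum_{P\in T_k}|b_P|^2$.

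For $j \in \{1,2\}$ one has $\phi_{P,j}=\psi_{P,j}$, hence $g = \sum_k g_k$ and the weighted bound sums to $\int |g|^2 \widetilde\chi_{I_T}^{-2}\,dx \lesssim 1$. For $j = 3$, $\phi_{P,3}(x)=a_P(x)\psi_{P,3}(x)$; the key observation is that on each $T_k$ the coefficient $a_P(x) \equiv \alpha_k(x)$ depends only on the scale index. Thus $g = \sum_k \alpha_k\, g_k$, and the normalization $\sum_m |a_m(x)|^{r'}\le 1$ (with $r'<2$ since $r>2$) combined with H\"older's inequality in the scale index gives the pointwise comparison
\[
|g(x)|^2 \;\le\; \Big(\sum_k |\alpha_k(x)|^{r'}\Big)^{2/r'}\Big(\sum_k |g_k(x)|^r\Big)^{2/r} \;\le\; \sum_k |g_k(x)|^2,
\]
using $r\ge 2$; the weighted $L^2$ bound on $g$ then follows by integrating and summing the scale-level estimates.

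Finally, H\"older's inequality separating the weights as $\widetilde\chi_{I_T}^{2/s}\cdot \widetilde\chi_{I_T}^{-2/s}$ gives
\[
|\<f,g\>|\;\leq\; \Big(\int|f|^s\widetilde\chi_{I_T}^2\Big)^{1/s}\Big(\int|g|^{s'}\widetilde\chi_{I_T}^{-2s'/s}\Big)^{1/s'};
\]
for $s\geq 2$ the weighted $L^2$-bound on $g$ suffices to control the second factor by a constant multiple of $|I_T|^{1/2-1/s}$, producing the desired $L^s$-average of $f$ against $\widetilde\chi_{I_T}^2$ on the right. For $1<s<2$ one interpolates against a standard $L^\infty$-type bound on $g$ obtained by a stopping-time decomposition on the level sets of $\ma^s f$ within $I_T$, as in \cite{mtt2002,demeter2007}. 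The principal novelty, and the main obstacle, is the scale-decomposition step in the $j=3$ case: the dependence of $a_P(x)$ on $P$ solely through $|I_P|$ is precisely what makes the $\ell^{r'}\to\ell^r$ H\"older trick applicable, and explains why the hypothesis $r>2$ enters the proposition.
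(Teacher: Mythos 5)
Your overall plan (duality on a single lacunary tree, decomposition by scale, orthogonality/Bessel, then an $\ell^{r'}$--$\ell^{r}$ H\"older step to absorb the variational coefficients $a_P$) is close in spirit to the paper's argument, but there is a concrete error in the key H\"older step for $j=3$.

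You write $g = \sum_k \alpha_k g_k$ where $\alpha_k(x) := a_{m(k,x)}(x)$ and then claim $\sum_k |\alpha_k(x)|^{r'} \le 1$. This does not follow from the normalization $\sum_m |a_m(x)|^{r'} \le 1$: the map $k \mapsto m(k,x)$ is many-to-one, so each $a_m$ is repeated for every scale $k$ with $k_{m-1}(x) \le k < k_m(x)$, and in general $\sum_k |\alpha_k|^{r'} = \sum_m (k_m - k_{m-1})|a_m|^{r'}$ can be arbitrarily large. Consequently the chain $|g|^2 \le (\sum_k |\alpha_k|^{r'})^{2/r'}(\sum_k |g_k|^r)^{2/r} \le \sum_k |g_k|^2$ is invalid. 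The H\"older inequality must be applied in $m$, not in $k$: writing $G_m := \sum_{k_{m-1}\le k < k_m} g_k$ (a difference of partial sums) one gets $|g| \le (\sum_m |G_m|^r)^{1/r}$, which is \emph{not} dominated pointwise by $(\sum_k |g_k|^2)^{1/2}$. Rather, it is a genuine $r$-variation of the partial-sum sequence. This is exactly where the paper appeals to Lemma~\ref{truncatebyconvlemma} to realize the partial sums as dyadic convolutions and then to the L\'epingle inequality (Lemma~\ref{l.lepingle}) to control the $V^r$-norm in $L^{s'}$ --- and it is the $r>2$ requirement for L\'epingle, not the trivial $\ell^r \hookleftarrow \ell^2$ nesting, that explains the hypothesis $r>2$. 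Without this variation-norm input, the argument does not close; with it, you also do not get a direct $L^2$ bound, which is why the paper first uses the John--Nirenberg lemma (Lemma~\ref{john-nirenberg-lemma}) to reduce the definition's $\ell^2$/size quantity to an $L^s$ bound (with $s$ close to $1$) that L\'epingle can deliver. Your proposed ``interpolate against an $L^\infty$-type bound via a stopping time'' for $1<s<2$ is gesturing at the same issue but is not a substitute for the John--Nirenberg reduction.
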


We will make use of a John-Nirenberg type lemma, proven in \cite{mtt2004}

\begin{lemma} \label{john-nirenberg-lemma}

Let $\{c_P\}_{P \in \P}$ be a collection of coefficients. Let $j\in \{1,2,3\}$. For $1\le p <\infty$ let
\begin{eqnarray*}
B_{p} &=& \sup_{T \subset \P}\frac{1}{|I_T|^{1/p}} \Big\|(\sum_{P \in \T}|c_P|^2\frac{1_{I_P}}{|I_P|})^{1/2} \Big\|_{L^{p}} 
\end{eqnarray*}
where the sup  is over all $j$-lacunary trees, and define $B_{1,\infty}$ analogously. Then
\begin{eqnarray*}
B_2 &\le& C B_{1,\infty}
\end{eqnarray*}
\end{lemma}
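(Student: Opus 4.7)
I will normalize $B_{1,\infty}=1$ and aim to show $\|S_T\|_{L^2}^2 \le C|I_T|$ for every $j$-lacunary $T\subset \P$, where $S_T(x):=\bigl(\sum_{P\in T}|c_P|^2|I_P|^{-1}\mathbf{1}_{I_P}(x)\bigr)^{1/2}$. The plan is a Calderón--Zygmund/John--Nirenberg iteration on the distribution function $F(\lambda):=|\{x\in I_T:S_T(x)>\lambda\}|$, starting from the trivial weak-type input $F(\lambda)\le|I_T|/\lambda$ and upgrading it to super-polynomial decay; this suffices in view of the identity $\|S_T\|_{L^2}^2=2\int_0^\infty\lambda F(\lambda)\,d\lambda$.

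The central self-improving recurrence I will prove is: for every $\lambda\ge 2$ and $\mu>0$,
\begin{eqnarray}\label{e.jn-rec}
F\bigl(\sqrt{\lambda^2+\mu^2}\bigr) \le 2F(\lambda)/\mu.
\end{eqnarray}
Since $\lambda\ge 2$ forces $F(\lambda)\le|I_T|/2$, I will perform a stopping time and collect the maximal dyadic sub-intervals $J\subsetneq I_T$ with $|J\cap\{S_T>\lambda\}|>|J|/2$ into a family $\mathcal{J}$; these cover $\{S_T>\lambda\}$ modulo null sets and satisfy $\sum_{J\in\mathcal{J}}|J|\le 2F(\lambda)$. Maximality forces the dyadic parent $\widehat{J}$ to contain some $y_J$ with $S_T(y_J)\le\lambda$, and by dyadic nesting every $P\in T$ with $I_P\supsetneq J$ satisfies $I_P\supseteq\widehat{J}\ni y_J$, whence
\begin{eqnarray*}
\sum_{P\in T:\,I_P\supsetneq J}|c_P|^2/|I_P| \le S_T(y_J)^2 \le\lambda^2.
\end{eqnarray*}
Therefore $S_T(x)^2\le\lambda^2+S_{T_J}(x)^2$ for $x\in J$, where $T_J:=\{P\in T:I_P\subseteq J\}$. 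Interpreting $T_J$ as a $j$-lacunary tree in $\P$ with a top in $\P_\nu$ of spatial part $J$, the hypothesis $B_{1,\infty}=1$ gives $|\{S_{T_J}>\mu\}|\le|J|/\mu$, and summing over $\mathcal{J}$ yields \eqref{e.jn-rec}.

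To finish, I iterate \eqref{e.jn-rec} along $\lambda_n=2n$ with $\mu_n=2\sqrt{2n+1}$ (so that $\sqrt{\lambda_n^2+\mu_n^2}=\lambda_{n+1}$), obtaining $F(\lambda_{n+1})\le F(\lambda_n)/\sqrt{2n+1}$ and hence $F(2n)\le|I_T|/(2\sqrt{(2n-1)!!})$ from the base case $F(2)\le|I_T|/2$. This decays faster than any polynomial, so $\int_0^\infty\lambda F(\lambda)\,d\lambda=O(|I_T|)$, completing the argument. The step I expect to require the most care is certifying $T_J$ as a genuine $j$-lacunary tree in $\P$ for the purpose of applying $B_{1,\infty}$: one must select a top in $\P_\nu$ with spatial part $J$ whose $j$-frequency interval is nested correctly against both $(P_T)_j$ and every $\omega_{P_j}$ with $P\in T_J$, so that the relations $P_j\lesssim'(P_T)_j$ transfer to the new top. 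Because the grids underlying $\P_\nu$ contain only every $4000$-th dyadic scale, the stopping time may need to be restricted to admissible dyadic scales, which only relaxes the $1/2$ density threshold to some absolute constant and leaves the iteration structure unchanged.
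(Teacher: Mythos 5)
The paper does not give its own proof of this lemma; it is cited as a result from Muscalu--Tao--Thiele \cite{mtt2004}. So there is no in-paper argument to compare against, and I will assess your proposal on its own.

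Your argument is the standard John--Nirenberg self-improvement and its skeleton is sound. After normalizing $B_{1,\infty}=1$, the stopping decomposition, the splitting $S_T^2\le \lambda^2 + S_{T_J}^2$ on each stopping interval $J$ (valid because dyadic nesting forces every $I_P$ meeting $J$ to either contain $J$ or lie inside it, and the good point $y_J\in\widehat{J}$ caps the former contribution at $\lambda^2$), the recurrence $F(\sqrt{\lambda^2+\mu^2})\lesssim F(\lambda)/\mu$, and the choice $\lambda_n=2n$, $\mu_n=2\sqrt{2n+1}$ giving super-polynomial decay of $F$ are all correct and yield $\|S_T\|_{L^2}^2\lesssim|I_T|$. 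The scale-restriction to every $4000$th dyadic level only degrades the stopping-density threshold by an absolute factor, as you say.

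The one place you correctly identify as needing care is not merely a technicality to be dismissed: to invoke $B_{1,\infty}$ on $T_J$ you must exhibit a top tri-tile $P^{(J)}\in\P_\nu$ with $|I_{P^{(J)}}|\approx|J|$ for which $T_J$ is $i$-overlapping (hence $j$-lacunary) — taking the original top $P_T$ is useless since it reinstates the normalization $|I_T|$ and destroys the gain. The needed fact is that if $T$ is $i$-overlapping with top $P_T$ and $J\subsetneq I_T$ is an admissible dyadic scale, then there is $P^{(J)}\in\P_\nu$ with $I_{P^{(J)}}=J$, $(P^{(J)})_i<(P_T)_i$, and $P_i<(P^{(J)})_i$ for every $P\in T_J\setminus\{P^{(J)}\}$. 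This does hold here, because the grids in \eqref{e.Pnu-def} are lacunary by factors $\ge 2^{4000}$ while the overlap criterion only involves dilation by the bounded factor $3$ (cf. the rigidity exploited in Lemma \ref{truncatebyconvlemma}: all $\omega_{P_j}$ with $|I_P|<|I_T|$ live in an annulus of fixed proportions around $c(\omega_{(P_T)_j})$). So the required nesting of $3\omega$'s is automatic. You should spell this out if you want a complete proof, but the gap is fillable and your overall argument is the expected one. Note also that the inequality $|\{S_{T_J}>\mu\}|\le|I_{T_J}|/\mu\le|J|/\mu$ already favors you since the new top can only shrink the normalizing interval.
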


Recall that $\nu=(j_1,j_2,e,i)$ with $0\le j_1,j_2\le 4$, $498 \le |e| \le 4002$, and $0\le i < 4000$. We will also need the following lemma:

\begin{lemma} \label{truncatebyconvlemma}
There is a Schwartz function $\zeta$ such that for each $j$-lacunary tree $T \subset \P_{\nu}$, $j\in \{1,2,3\}$, each sequence of coefficients $\{c_P\}_{P \in T}$ and each integer $k$ with $2^k \leq |I_T|$ and $-k=i \mod 4000$  we have
\begin{eqnarray*}
\sum_{P \in T: \ |I_P| \geq 2^k}c_P \phi_{P,j}(x) =\e^{2 \pi \i c(\omega_{(P_T)_j}) x}\left(2^{-k}\zeta(2^{-k} \cdot)*\Big[\e^{-2 \pi \i c(\omega_{(P_T)_j}) \cdot}\sum_{P \in T}c_P \phi_{P,j}\Big]\right)(x)
\end{eqnarray*} 
\end{lemma}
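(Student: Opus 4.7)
The plan is to verify the claimed identity by Fourier analysis, picking $\zeta$ once and for all to be a Schwartz function whose Fourier transform $\widehat{\zeta}$ is a smooth bump that separates two enormously different frequency scales. Writing $c_T := c(\omega_{(P_T)_j})$ for brevity, and applying the modulation identity $\mathcal F[\e^{\pm 2 \pi \i c_T x} g](\xi) = \widehat{g}(\xi \mp c_T)$ together with the convolution--product rule to the right-hand side of the asserted identity, the Fourier transform of that right-hand side is
\begin{eqnarray*}
\widehat{\zeta}\bigl(2^{k}(\xi - c_T)\bigr) \sum_{P \in T} c_P \widehat{\phi_{P,j}}(\xi).
\end{eqnarray*}
Since each $\widehat{\phi_{P,j}}$ is supported in $\omega_{P_j}$, it therefore suffices to exhibit a Schwartz $\zeta$, independent of $T$, $k$ and $\{c_P\}$, such that $\widehat{\zeta}(2^k(\xi - c_T)) \equiv 1$ on $\omega_{P_j}$ whenever $|I_P| \ge 2^k$, and $\equiv 0$ on $\omega_{P_j}$ whenever $|I_P| < 2^k$.

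Both conditions reduce to a geometric statement about the placement of $\omega_{P_j}$ relative to $c_T$. The $j$-lacunary hypothesis $P_j \lesssim' (P_T)_j$ unpacks to $\omega_{(P_T)_j} \subset 10|e|\omega_{P_j}$ together with $10\omega_{P_j} \cap 10\omega_{(P_T)_j} = \emptyset$, yielding the two-sided bound
\begin{eqnarray*}
5|\omega_{P_j}| \ < \ |c(\omega_{P_j}) - c_T| \ \le \ 5|e|\cdot|\omega_{P_j}|.
\end{eqnarray*}
Hence $\omega_{P_j}$ lies in the annulus $\{\xi : 4.5|\omega_{P_j}| \le |\xi - c_T| \le (5|e|+0.5)|\omega_{P_j}|\}$, with constants uniformly bounded in view of $498 \le |e| \le 4002$. (The tile $P = P_T$ is harmless since $\omega_{(P_T)_j}$ surrounds $c_T$ with length $\le 2^{-k}$.)

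Next I exploit the scale sparsity of $\P_\nu$: the definition forces admissible side-lengths to satisfy $|I_P| \in 2^{-i + 4000\Z}$, so together with the hypothesis $-k \equiv i \pmod{4000}$ the strict inequality $|I_P| < 2^k$ is in fact upgraded to $|I_P| \le 2^{k - 4000}$; equivalently $|\omega_{P_j}| \ge 2^{4000 - k}$. Passing to the rescaled variable $\eta := 2^k(\xi - c_T)$, the annuli of the retained tiles ($|I_P| \ge 2^k$) all lie in $\{|\eta| \le 5|e|+0.5\} \subset \{|\eta| \le 20011\}$, whereas the annuli of the discarded tiles lie in $\{|\eta| \ge 4.5\cdot 2^{4000}\}$. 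Since $20011 \ll 4.5\cdot 2^{4000}$, I may fix once and for all a Schwartz $\zeta$ with $\widehat{\zeta}$ identically $1$ on $\{|\eta| \le 20011\}$ and identically $0$ on $\{|\eta| \ge 4.5\cdot 2^{4000}\}$; this $\zeta$ satisfies both requirements simultaneously for every $T$, $k$, and $P \in T$. Inverting the Fourier transform and invoking the elementary identity $(\e^{2\pi\i c_T (\cdot)} h) * g = \e^{2\pi\i c_T x}\bigl(h * (\e^{-2\pi\i c_T (\cdot)} g)\bigr)(x)$ then yields the stated formula.

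The substantive step is the geometric one: reading off the annulus bounds from the $\lesssim'$ ordering, and observing that the $4000$-fold scale gap encoded in $\P_\nu$ comfortably swamps the $|e|$-dependent outer radius of that annulus. Everything else is routine bookkeeping of Fourier identities.
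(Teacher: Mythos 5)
Your proof is correct and follows essentially the same route as the paper's (which dispatches the lemma in two sentences): take Fourier transforms, read off from the $\lesssim'$ ordering that each $\omega_{P_j}$ for $P \neq P_T$ sits in an annulus around $c(\omega_{(P_T)_j})$ at radius comparable to $|\omega_{P_j}|$ with a uniformly bounded aspect ratio, and then use the $2^{4000}$-fold scale separation built into $\P_\nu$ to separate the retained and discarded frequency annuli. Your explicit constants ($4.5$ inner, $20011$ outer after rescaling) differ from the paper's ($10$ and $10000$), but since the scale gap is astronomical this discrepancy is irrelevant, and your detailed bookkeeping of the $\lesssim'$ conditions is a clean account of what the paper compresses into "one can check."
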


\begin{proof}
One can check that for each $P$ with $|I_P| < |I_T|$, we have 
\begin{eqnarray*}
\omega_{P,j} \subset  c(\omega_{(P_T)_j}) \pm (10 |I_P|^{-1},10000 |I_P|^{-1}) 
\end{eqnarray*}
(the sign depends on $e$ and $j$.) Choosing $\zeta$ with $\widehat{\zeta}=1$ on $(-10000,10000)$ and $\widehat{\zeta}$ supported on $(-10001,10001),$
the fact that for $P \in T$ we have $-\log_2(|I_P|)=i \mod 4000$  then gives the lemma since $2^{4000}\cdot10> 10001.$
\end{proof}

\begin{proof}[Proof of Proposition \ref{p.sizebound}]
The case $j=1,2$ of the desired conclusion is standard and we could actually get $s=1$, so the argument below (while applicable for all $j$) is only needed for $j=3$.  By Lemma \ref{john-nirenberg-lemma} it suffices to fix a $j$-lacunary tree $T \subset \P$ and show that
\begin{eqnarray} \label{sbdisplay1}
&&\frac{1}{|I_T|^{1/s}} \Big\| \big(\sum_{P \in T}|\<f,\phi_{P,j}\>|^2\frac{1_{I_P}}{|I_P|}\big)^{1/2}\Big\|_{L^s} \\
\nonumber &\lesssim_{r,s}& \quad  \sup_{P,P' \in T}\ \ \sup_{I_P \subset I \subset I_{P'}}\left( \frac{1}{|I|} \int |f(x)|^s \widetilde \chi_I(x)^2 \ dx\right)^{1/s} .
\end{eqnarray}
By dividing $T$ into maximal subtrees with top, we may assume that $P_T\in T$. Let $R$ denote the right side of \eqref{sbdisplay1}.
If $supp(f) \subset \rea \setminus 2I_T$ then for each $P \in T$
\begin{eqnarray} \label{sbdisplay-1}
|\<f,\phi_{P,j}\>|\frac{1}{|I_P|^{1/2}} 
&\le& C \left(\frac{|I_P|}{|I_T|}\right)^{M-2} \frac{1}{|I_P|} \int |f(x)| \widetilde \chi_{I_P}(x)^2 \ dx \\
\nonumber &\le& C  \left(\frac{|I_P|}{|I_T|}\right)^{M-2} R  \ \ .
\end{eqnarray}
With $M$ sufficiently large (say $M>3$), it follows that the left side of \eqref{sbdisplay1} is bounded above by
\begin{eqnarray} 
\nonumber &\le& C \frac{1}{|I_T|^{1/s}} \sum_{P \in T} \left(\frac{|I_P|}{|I_T|}\right)^{M-2} |I_P|^{1/s} R \\
\nonumber &\le& CR 
\end{eqnarray}

Thus, it remains to prove \eqref{sbdisplay1} for functions supported on $2I_T$. From this support assumption, we see that it suffices (by choosing $I=I_T$) to show
\begin{eqnarray} \label{sbdisplay2}
\Big\|\big(\sum_{P \in T}|\<f,\phi_{P,j}\>|^2\frac{1_{I_P}}{|I_P|}\big)^{1/2}\Big\|_{L^s} &\le& C_{r,s} \quad \|f\|_{L^s}.
\end{eqnarray}
By the usual Rademacher function argument, the left side of \eqref{sbdisplay2} is
\begin{eqnarray*}
&\le& \sup_{\{b_P\}_{P \in T}}  \|\sum_{P \in T}b_P \<f,\phi_{P,j}\>h_{I_P}\|_{L^s}
\end{eqnarray*}
where the supremum is over all sequences $\{b_P\}$ of $\pm1$'s on $T$ and $h_{I_P}$ is the $L^2$ normalized Haar function adapted to $I_P.$ After fixing such a sequence and using duality, we are then reduced to showing the bound
\begin{eqnarray} \label{sbdisplay3}
\|\sum_{P \in T}b_P \<g,h_{I_P}\>\phi_{P,j}\|_{L^{s'}}
 &\le& C_{r,s} \|g\|_{L^{s'}}
\end{eqnarray}
where $s'=s/(s-1).$
Recalling the definition of $\phi_{P,j}$ the left side of \eqref{sbdisplay3} is 
\begin{eqnarray} \label{sbdisplay4}
&\le& \|\sum_{\substack{P \in T \\ |I_P| \geq 2^k}}b_P \<g,h_{I_P}\>\psi_{P,j}(x)\|_{L^{s'}_x(V^r_k)}.
\end{eqnarray}
By Lemma \ref{truncatebyconvlemma}\footnote{Here we use the fact that the variation over all $k$ in \eqref{sbdisplay4} is the same as the variation restricted to $2^k \leq |I_T|$ and $-k = i \mod 4000$ which is the same as the restricted variation for convolution which is bounded by the variation over all $k$ for the convolution.} the display above is
\begin{eqnarray*}
&\le& \|2^{-k}\zeta(2^{-k} \cdot)*\big[\e^{-2 \pi \i c(\omega_{(P_T)_j}) \cdot}\sum_{P \in T}b_P \<g,h_{I_P}\>\psi_{P,j}\big](x)\|_{L^{s'}_x(V^r_k)}.
\end{eqnarray*}
By Lemma \ref{l.lepingle}, the display above is
\begin{eqnarray*} 
&\le& C_{r,s} \quad \| \sum_{P \in T}b_P \<g,h_{I_P}\>\psi_{P,j}\|_{L^{s'}} \\
&\lesssim& \|g\|_{L^{s'}(\R)}
\end{eqnarray*}
the second estimate follows from  standard Calder\'{o}n-Zygmund theory.
\end{proof}

\section{A variation-norm size increment lemma} \label{s.sizelemma}

\begin{proposition} \label{l.size-increment}
Let $\P \subset \P_\nu$ be a finite collection of tri-tiles, $\delta > 0,$ $r > 2$, and $j \in \{1,2,3\}.$ Suppose that $M$ (from the hypotheses of Theorem \ref{t.model-varnorm}) is sufficiently large depending on $\delta.$   Then for each $\alpha$ satisfying 
\begin{eqnarray*}
\size_j(\P,f) &\le& \alpha
\end{eqnarray*} 
we can find a collection of trees $\T,$ each contained in $\P$, satisfying 
\begin{eqnarray} \label{l.size-incrementconc1}
\size_j(\P \setminus \bigcup_{T \in \T}T,f)  &\le& \frac{1}{2} \alpha,
\end{eqnarray}
\begin{eqnarray} \label{l.size-incrementconc2}
\sum_{T \in \T}|I_T|  &\lesssim& \left(\frac{\|f\|_{L^{\infty}}}{\alpha}\right)^{\delta}\alpha^{-2} \|f\|_{L^2}^2 \ \ .
\end{eqnarray}
\end{proposition}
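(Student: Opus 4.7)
The plan is to follow the greedy tree-extraction template of \cite{lacey2000,demeter2007,dtt2008}, with the principal new ingredient being the variation-norm Bessel inequality (forthcoming in Section~\ref{s.varnorm-bessel}) that accommodates the weights $a_P$ inside $\phi_{P,3}$.

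First I would run a greedy extraction: iteratively pick a $j$-lacunary tree $T \subset \P$ whose size is at least $\alpha/4$ and whose top $(P_T)_j$ is extremal with respect to a suitable ordering (e.g.\ by the position of its frequency interval), add $T$ to $\T$, and remove from $\P$ both $T$ and all tri-tiles $P' \in \P$ with $P'_j \lesssim (P_T)_j$. A careful choice of the ordering ensures that the resulting collection of tops $\{(P_T)_j : T \in \T\}$ satisfies the strong $j$-disjointness conditions of Definition~\ref{d.strong-j-disjoint}. The process stops once the residual size falls below $\alpha/2$, giving \eqref{l.size-incrementconc1}.

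For \eqref{l.size-incrementconc2}, the lower size bound $\alpha^2 |I_T|/16 \le \sum_{P \in T}|\<f,\phi_{P,j}\>|^2$ summed over $T \in \T$ reduces the task to bounding $\sum_T \sum_{P \in T}|\<f,\phi_{P,j}\>|^2$. For $j \in \{1,2\}$, where $\phi_{P,j}=\psi_{P,j}$, dualizing the Bessel estimate of Lemma~\ref{nonmaxbessellemma} with the coefficients $b_P = \<f,\psi_{P,j}\>$ (and applying Cauchy--Schwarz) yields the upper bound $\log^2(1+L)\,\|f\|_{L^2}^2$, where $L = \|\sum_{T \in \T}1_{I_T}\|_{L^\infty}$ is the maximum overlap. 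One can control $L \lesssim (\|f\|_\infty/\alpha)^{O(1)}$: if some point $x$ were covered by too many tops then, using the pointwise decay $|\<f,\phi_{P,j}\>| \lesssim \|f\|_\infty |I_P|^{1/2}$ (valid because the wave packets are adapted to order $M$ sufficiently large), a single-tile test per tree sitting over $x$ would contradict the size lower bound on one of them. Since $\log^2(1+L) \lesssim_\delta (\|f\|_\infty/\alpha)^\delta$, the conclusion follows.

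The main obstacle is the case $j=3$, where $\phi_{P,3}(x)=a_P(x)\psi_{P,3}(x)$ depends on $P$ through both the wave packet and the $x$-dependent weight $a_P$; Lemma~\ref{nonmaxbessellemma} does not apply directly. Here I would invoke the variation-norm Bessel inequality of Section~\ref{s.varnorm-bessel}. The key structural feature is that for each fixed $x$ the function $P \mapsto a_P(x)$ is piecewise constant on the scale bands $[2^{k_{n-1}(x)},2^{k_n(x)})$ with telescoping jumps controlled in $\ell^{r'}$. An Abel-type rearrangement therefore converts the $P$-sum into an $r$-variation in scale of partial sums of the form $\sum_{P:|I_P|\ge 2^k}\<f,\psi_{P,3}\>\psi_{P,3}$, which is exactly the type of expression handled by the sharp multi-frequency variation estimate of Theorem~\ref{t.bourgain-varnorm}. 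The resulting factor $N^\epsilon$ from Theorem~\ref{t.bourgain-varnorm}, where $N$ counts the relevant frequencies $c(\omega_{(P_T)_3})$, is then absorbed into $(\|f\|_\infty/\alpha)^\delta$ by the same overlap argument used above; this is the source of the hypothesis that $M$ be sufficiently large in terms of $\delta$.
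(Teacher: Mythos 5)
Your overall plan matches the paper's: a greedy stopping-time extraction of trees followed by a reduction to a Bessel inequality for strongly $j$-disjoint trees. Two aspects need attention, however.

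First, the overlap-control step for $j\in\{1,2\}$ has a gap. You propose to bound $L=\|\sum_{T\in\T}1_{I_T}\|_\infty$ directly by $(\|f\|_\infty/\alpha)^{O(1)}$ via a ``single-tile test,'' and then invoke Lemma~\ref{nonmaxbessellemma}. That test does not in fact produce a contradiction: if $L$ nested tops over a point $x$ each carry size $\ge \alpha/2$, the contributing tiles of the different trees can sit in disjoint (time-frequency) regions, so no single tile is forced to be large; strong $j$-disjointness makes all the $P_j$ pairwise disjoint, which is exactly what \emph{prevents} the pointwise lower bounds from piling up at a single tile. Bounding $N_{\T}$ is genuinely nontrivial and is the content of the self-improving Lemma~\ref{l.self-improving} inside the proof of Theorem~\ref{t.vnbi}. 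The paper therefore does \emph{not} split into $j\in\{1,2\}$ and $j=3$: it reduces Proposition~\ref{l.size-increment} to Theorem~\ref{t.vnbi} uniformly in $j$ (note $\phi_{P,j}=\psi_{P,j}$ for $j=1,2$), and Theorem~\ref{t.vnbi} already delivers the bound $\lesssim_\sigma\|f\|_\infty^\sigma\|f\|_2^2$ with no a priori hypothesis on $L$. Your $j=3$ paragraph is essentially a description of ideas in the proof of Theorem~\ref{t.vnbi}, which, at the level of proving Proposition~\ref{l.size-increment}, one should simply cite as a black box.

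Second, the stopping-time construction needs a bit more care than ``remove all $P'$ with $P'_j\lesssim(P_T)_j$.'' The delicate point is condition~\eqref{thirdstrongdisjoint} (the nonstandard $P'_j\not\le(P_T)_j$) of Definition~\ref{d.strong-j-disjoint}. The paper fixes $i\ne j$, at each step chooses an $i$-overlapping tree $\widetilde T_{n+1}$ violating the size bound with $\epsilon_{i,j}c(\omega_{(P_{\widetilde T_{n+1}})_j})$ maximal, then removes both the maximal $i$-overlapping tree $T_{n+1}$ with that top and a \emph{companion maximal $j$-overlapping tree} $S_{n+1}$ with the same top; both $T_k$ and $S_k$ are added to $\T$, and only $\{T_k\}$ is claimed to be strongly $j$-disjoint. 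Your removal step is more aggressive and the removed tiles you do not organize into trees of $\T$ would remain in $\P\setminus\bigcup_{T\in\T}T$, so the size conclusion \eqref{l.size-incrementconc1} would not be established as stated. If you instead remove $T_{n+1}\cup S_{n+1}$ and include both in $\T$, the verification of conditions~\eqref{secondstrongdisjoint} and \eqref{thirdstrongdisjoint} becomes the clean argument given in the paper.
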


Below, we show how Proposition \ref{l.size-increment} follows from the variation-norm Bessel inequality, Theorem~\ref{t.vnbi}. The proof uses a standard stopping time argument, which we recall in order to note that our condition \eqref{thirdstrongdisjoint} in the definition of strong j-disjointness is satisfied.

Below recall that if $T$ is $i$-overlapping then for each $ j \in \{1,2,3\}\setminus\{i\}$ the sign $\epsilon_{i,j} := \sgn(c(\omega_{P_j}) - c(\omega_{(P_T)_j}))$ depends only on $i$, $j$, $e$, and not on $T$ (for details see the discussion after Definition~\ref{d.tree}).

\begin{proof}[Proof. (reduction to Bessel inequality)]
By scaling $f$ we may asssume that $\alpha=1$. 

It suffices to show that for each $i \in \{1,2,3\} \setminus \{j\}$ we could find $\T$ satisfying \eqref{l.size-incrementconc2} such that for each $i$-overlapping tree $ T \subset \P \setminus \bigcup_{T' \in \T}T'$ we have 
\begin{eqnarray} \label{l.size-incrementconc1rs}
\frac{1}{|I_{T}|} \sum_{P \in T} |\<f,\phi_{P,j}\>|^2 &\le& \frac{1}{4}.
\end{eqnarray}

Let $T_0=S_0=\emptyset.$
Suppose $T_0, \ldots, T_n$ and $S_0, \ldots, S_n$ have been chosen and set 
\begin{eqnarray*}
\P_n = \P \setminus \big(\bigcup_{k = 0}^n T_k \cup S_k\big)
\end{eqnarray*}
If there are no $i$-overlapping trees $T \subset \P_n $ violating \eqref{l.size-incrementconc1rs} then we finish by setting 
\begin{eqnarray*}
\T = \{T_k\}_{k=1}^n \cup \{S_k\}_{k=1}^n.
\end{eqnarray*}
Otherwise, if $\P_n$ contains an $i$-overlapping tree violating  $\eqref{l.size-incrementconc1rs}$ then we may choose such a tree $\widetilde T_{n+1}$ so that  $\epsilon_{i,j}c(\omega_{(P_{\widetilde{T}_{n+1}})_j})$ is maximal. We then let $T_{n+1}$ be the maximal (with respect to inclusion) $i$-overlapping tree contained in $\P_n$ which satisfies $P_{T_{n+1}} = P_{\widetilde{T}_{n+1}}$. Let $S_{n+1}$ be the maximal (with respect to inclusion) $j$-overlapping tree contained in $\P_n \setminus T_{n+1}$ which satisfies $P_{S_{n+1}}=P_{\widetilde{T}_{n+1}}$ 

Since $\P$ is finite and $T_n \neq \emptyset$, this process will eventually terminate, yielding some
\begin{eqnarray*}
\T := \{T_k\}_{k=1}^N \cup \{S_k\}_{k=1}^N.
\end{eqnarray*}
We claim that the collection $\{T_k\}_{k=1}^N$ is strongly $j$-disjoint (recall that this is defined in Definition~\ref{d.strong-j-disjoint}), and so Proposition \ref{l.size-increment} follows from Theorem \ref{t.vnbi}. It suffices to verify condition~\eqref{secondstrongdisjoint} and condition~\eqref{thirdstrongdisjoint} of Definition~\ref{d.strong-j-disjoint}. 

In the following, let $k \neq k'$, $P \in T_k$, and $P' \in T_{k'}$.

For \eqref{secondstrongdisjoint},   assume that $\omega_{P_j} \subsetneq \omega_{P'_j}$. Then $|\omega_{P'_j}| \geq 2^{4000}|\omega_{P_j}|$ which implies that $\epsilon_{i,j}c(\omega_{(P_{T_{k}})_j}) >  \epsilon_{i,j}c(\omega_{(P_{T_{k'}})_j})$ and so $k < k.'$ But, since  $3\omega_{(P_{T_k})_j} \subset 30|e|\omega_{P_j} \subset 3\omega_{P'_j}$ and $P' \notin S_k$
we must have $I_{P'} \cap I_{T_{k}} = \emptyset$. 

Now,  to see condition \eqref{thirdstrongdisjoint}, by symmetry it suffices to show that $P'_j \not \le (P_{T_k})_j$. First suppose that $P'_j = (P_{T_{k}})_j$, or equivalently $P'=P_{T_k}$. Then for each $P \in T_k$ we have $P_i \leq P'_i \leq (P_{T_{k'}})_i$ and so we must have $k < k'$ or else every element of $T_k$ would have already been chosen in $T_{k'}$. But, if $k < k'$ then we would have $P' \in T_k$, contradicting $P' \in T_{k'}$. Now, suppose that $P'_j < (P_{T_{k}})_j$. Then, as in the verification of \eqref{secondstrongdisjoint}, $\epsilon_{i,j}c(\omega_{(P_{T_{k}})_j}) >  \epsilon_{i,j}c(\omega_{(P_{T_{k'}})_j})$ and so $k < k.'$ But, the fact that $P' \notin S_k$ contradicts $P'_j \leq (P_{T_{k}})_j$.
\end{proof}

\section{A variation-norm Bessel inequality} \label{s.varnorm-bessel}

In this section, we fix $\sigma>0$ and assume that the order $M$ of the wave packets (from the hypotheses of Theorem \ref{t.model-varnorm}) is sufficiently large depending on $\sigma$. Our goal here is to prove the following variation-norm Bessel inequality:

\begin{theorem} \label{t.vnbi}
Let $\T$ be a collection of strongly $j$-disjoint trees, such that 
\begin{eqnarray}\label{e.size-hypothesis}
\sup_{I \ dyadic} (\frac{1}{|I|} \sum_{\substack{P \in T \\ I_P \subset I}}  |\<f,\phi_{P,j}\>|^2)^{1/2} &\le& 1 \quad \le \quad 2 (\frac{1}{|I_T|} \sum_{P \in T}  |\<f,\phi_{P,j}\>|^2)^{1/2}    \ \ ,
\end{eqnarray}
for each $T\in \T$.   Let $\P=\bigcup_{T\in \T} T$. Then
\begin{eqnarray*}
\sum_{P \in \P}   |\<f,\phi_{P,j}\>|^2 &\lesssim_{\sigma}  \|f\|_{L^{\infty}}^{\sigma} \| f\|_{L^2}^2. 
\end{eqnarray*}
\end{theorem}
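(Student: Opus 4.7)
Taking $I=I_T$ in the upper size hypothesis gives $\sum_{P\in T}|\langle f,\phi_{P,j}\rangle|^2\le |I_T|$, and since distinct trees are disjoint (condition (2) of strong $j$-disjointness),
\[
\sum_{P\in\P}|\langle f,\phi_{P,j}\rangle|^2 \;\le\; \sum_{T\in\T}|I_T|,
\]
so it suffices to show $\sum_T |I_T| \lesssim_\sigma \|f\|_\infty^{\sigma}\|f\|_2^2$. The lower size hypothesis $|I_T|/4\le\sum_{P\in T}|\langle f,\phi_{P,j}\rangle|^2$ combined with $\ell^2$-duality produces unit coefficients $(c_P^T)_{P\in T}$ and phases $\epsilon_T\in\C$, $|\epsilon_T|=1$, with
\[
\sum_T|I_T| \;\lesssim\; \mathrm{Re}\,\langle f,F\rangle \;\le\; \|f\|_2\|F\|_2,\qquad F:=\sum_{P\in\P}b_P\phi_{P,j},\quad b_P:=\epsilon_{T(P)}|I_{T(P)}|^{1/2}\overline{c_P^{T(P)}},
\]
where $\|b\|_{\ell^2(\P)}^2=\sum_T|I_T|$. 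The problem is thus reduced to proving the ``square-root" bound $\|F\|_2\lesssim_\sigma \|f\|_\infty^{\sigma/2}(\sum_T|I_T|)^{1/2}$.

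\textbf{Pointwise reduction via variation norms.} For $j\in\{1,2\}$ we have $\phi_{P,j}=\psi_{P,j}$, and Lemma~\ref{nonmaxbessellemma} yields $\|F\|_2\lesssim\log(1+L)\|b\|_{\ell^2}$ where $L=\|\sum_T 1_{I_T}\|_\infty$; the bound $L\lesssim\|f\|_\infty^{O(1)}$ (from the size hypothesis together with the crude estimate $|\langle f,\psi_{P,j}\rangle|\lesssim\|f\|_\infty|I_P|^{1/2}$) and $\log t\lesssim_\sigma t^\sigma$ close this case. For the main case $j=3$ we exploit $\phi_{P,3}(x)=a_P(x)\psi_{P,3}(x)$, recalling that $a_P(x)$ depends on the tile $P$ only through the scale $|I_P|$. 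Regrouping the sum defining $F(x)$ according to the $x$-dependent partition $\{k_n(x)\}$ from the $V^r$-linearization gives
\[
F(x)=\sum_{n}a_n(x)\bigl(H_{k_n(x)}(x)-H_{k_{n-1}(x)}(x)\bigr),\qquad H_K(x):=\sum_{P\in\P:\,|I_P|<2^K}b_P\,\psi_{P,3}(x),
\]
and H\"older's inequality in $n$, applied to the constraint $\sum_n|a_n(x)|^{r'}\le 1$ with $r>2$, yields the pointwise bound $|F(x)|\le\|H_K(x)\|_{V^r_K}$, hence $\|F\|_2\le\|H_K(x)\|_{L^2_x(V^r_K)}$.

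\textbf{Variation-norm multi-frequency projection.} Lemma~\ref{truncatebyconvlemma}, applied tree by tree (using $-\log_2|I_P|=i\bmod 4000$ for $P\in\P_\nu$), rewrites the inner partial sum as a convolution against a dilate $\zeta_K:=2^{-K}\zeta(2^{-K}\cdot)$ of a fixed Schwartz $\zeta$, modulated by $\e^{2\pi\i\xi_T x}$ with $\xi_T:=c(\omega_{(P_T)_3})$. On the Fourier side this exhibits $H_K$ as the smooth multi-frequency projection at scale $2^{-K}$ onto the frequencies $\{\xi_T\}_T$, which are separated at each dyadic scale by strong $3$-disjointness. A smooth-projection version of Theorem~\ref{t.bourgain-varnorm} (produced by the same averaging argument as in Proposition~\ref{p.bourgain-varnorm-sep}) then gives
\[
\|H_K(x)\|_{L^2_x(V^r_K)} \;\lesssim_\epsilon\; N^\epsilon\,\|G\|_2,\qquad G:=\sum_T \epsilon_T|I_T|^{1/2}F_T,\quad F_T:=\sum_{P\in T}\overline{c_P^T}\psi_{P,3},
\]
where $N=\#\T$. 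The near-orthogonality of the $F_T$ (from strong $3$-disjointness) and the unit-$\ell^2$ normalization of $(c_P^T)$ deliver $\|G\|_2^2\lesssim\sum_T|I_T|$, while a crude size-based count bounds $N\lesssim\|f\|_\infty^{O(1)}\|f\|_2^2$; taking $\epsilon$ small enough converts $N^\epsilon$ into $\|f\|_\infty^{\sigma/2}$ and closes the chain of inequalities.

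\textbf{Main obstacle.} The most delicate step will be passing rigorously from the discrete wave-packet sum $H_K$ to an object that genuinely matches the hypotheses of the variation-norm multi-frequency projection, since the $\psi_{P,3}$ are only approximately band-limited and truncation at scale $2^K$ produces Schwartz tails that must be summed over trees without destroying the $N^\epsilon$ gain; a secondary difficulty is verifying that the crude count of the effective trees $N$ in terms of $\|f\|_\infty$ and $\|f\|_2$ is compatible with the target exponent $\|f\|_\infty^\sigma$ for arbitrarily small $\sigma>0$.
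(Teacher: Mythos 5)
Your proposal takes a genuinely different route from the paper's: the paper proves Theorem \ref{t.vnbi} through a long chain of reductions (the self-improving BMO/$L^1$ lemma \ref{l.self-improving}, a weak-type estimate with an exceptional set of tiles in Proposition \ref{p.delta-loss-weaktype}, a sparse reduction in Proposition \ref{p.vnbi-sparse}, and finally a ``layering'' of the tree tops in Proposition \ref{p.lastred}), and only applies the multi-frequency variational estimate \emph{locally} --- for each $J$, to the at most $L$ trees with $J\subset I_T$ --- after the layering has made the effective overlap small. You instead attack the Bessel inequality head-on with a single global $TT^\ast$ duality followed by a global multi-frequency projection bound. This is conceptually cleaner, but it does not close, for two reasons that go beyond the ``main obstacle'' you flag.

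\textbf{The count of frequencies is not controlled by $\|f\|_\infty$.} Your plan requires $N^{2\epsilon}\lesssim\|f\|_\infty^\sigma$, where $N$ is the number of trees (equivalently, of frequencies $\xi_T$). Nothing in the hypotheses bounds $N$ in terms of $\|f\|_\infty$: take $\|f\|_\infty=\|f\|_2=1$ and a huge family of trees with tiny $|I_T|$, each satisfying the size sandwich. Even the bound you write, $N\lesssim\|f\|_\infty^{O(1)}\|f\|_2^2$, is both unproven (it does not follow from the size hypotheses) and numerologically fatal: unwinding your chain gives $\sum_T|I_T|\lesssim N^{2\epsilon}\log^2(1+L)\|f\|_2^2\lesssim\|f\|_\infty^{O(\epsilon)}\|f\|_2^{2+2\epsilon}$, and the extra $\|f\|_2^{2\epsilon}$ cannot be absorbed since the target right-hand side is exactly $\|f\|_\infty^\sigma\|f\|_2^2$. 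The paper avoids this entirely: the $N^\epsilon$ loss is incurred only after the layering, where the relevant count is the overlap parameter $L$, and the final control of $L$ comes from the self-improving mechanism of Lemma \ref{l.self-improving} rather than from any a priori count of trees.

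\textbf{The global multi-frequency projection identity does not hold.} Lemma \ref{truncatebyconvlemma} applies tree by tree, producing for each $T$ a modulated convolution against $\zeta_k$ applied to that tree's own sum $\sum_{P\in T}c_P\psi_{P,3}$. To rewrite $H_K$ as a single smooth multi-frequency projection $\Pi_K$ applied to $\widetilde F=\sum_P b_P\psi_{P,3}$ (so that Theorem \ref{t.bourgain-varnorm} or its smooth analogue can be invoked), you would need the supports of $\widehat\zeta(2^K(\cdot-\xi_T))$ to be pairwise disjoint across $T$ at every scale $K$ occurring in the sum, and you would need each $\widehat{\sum_{P\in T}c_P\psi_{P,3}}$ to be captured by its own bump. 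Globally this fails: two trees with disjoint tops can have $\xi_T$, $\xi_{T'}$ arbitrarily close, and at coarse scales the multipliers overlap. The separation needed is guaranteed by condition (4) of strong $j$-disjointness only \emph{relative to a common spatial interval} $J\subset I_T\cap I_{T'}$ --- which is precisely why the paper's Proposition \ref{p.lastred} works with the localized collections $\P_{<J}$ and uses \eqref{projectionworks} only there. Without the spatial localization, the wave-packet sum $H_K$ is not a multi-frequency projection of $\widetilde F$, and the variational multiplier theorem does not apply.

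In short, the first two paragraphs of your proposal (the reduction to $\sum_T|I_T|$ and the passage $|F(x)|\le\|H_K(x)\|_{V^r_K}$) are sound and in the spirit of the paper, but the third paragraph conflates the local projection identity the paper uses with a global one that is not available, and the ensuing count of frequencies does not yield the needed $\|f\|_\infty^\sigma$. To repair this one would essentially have to reinstate the paper's intermediate steps: first gain control of the overlap $\|N_\T\|_\infty$ (through Lemma \ref{l.self-improving} or something equivalent), then decompose $\P$ spatially into the $\P_J,\P_{<J}$ layers, and only then apply the multi-frequency estimate locally.
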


 As in \cite{dtt2008}, we prove Theorem~\ref{t.vnbi} via a sequence of  reductions.

\subsection{Proof of Theorem~\ref{t.vnbi}, reduction 1}
Thanks to Lemma~\ref{l.self-improving} below, Theorem~\ref{t.vnbi} follows immediately from the following proposition:
\begin{proposition}\label{p.delta-loss}Assume $\P$ and $\T$ as in Theorem~\ref{t.vnbi}. Then for all $\delta>0$
\begin{eqnarray}\label{e.delta-loss}
\sum_{P \in \P} |\<f,\phi_{P,j}\>|^2 &\lesssim_\delta&  \|N_{\T}\|_{L^{\infty}}^\delta 
\int |f(x)|^2 \widetilde \chi_I(x)^{10} \ dx \ \ ,
\end{eqnarray}
if $I_T \subset I$ dyadic for all $T \in \T$.
\end{proposition}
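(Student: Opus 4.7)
The plan is to reduce \eqref{e.delta-loss} to the variation-norm sharp multi-frequency Fourier projection bound of Theorem~\ref{t.bourgain-varnorm}, via the telescoping identity furnished by Lemma~\ref{truncatebyconvlemma} and the $j$-lacunary tree structure.

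First, by duality applied to the quadratic sum, weighted Cauchy--Schwarz with weight $\widetilde\chi_I^{5}$, and a Rademacher-sign averaging, the estimate \eqref{e.delta-loss} is equivalent to
\[
\Bigl\|\sum_{P\in\P} b_P\,\phi_{P,j}\Bigr\|_{L^2(\widetilde\chi_I^{-10}\,dx)}^2 \;\lesssim_\delta\; \|N_{\T}\|_{L^{\infty}}^\delta \sum_{P\in\P}|b_P|^2
\]
for arbitrary coefficients $(b_P)$. Setting $F_T:=\sum_{P\in T}b_P\,\phi_{P,j}$ and $F:=\sum_{T\in\T}F_T$, the within-tree Fourier disjointness across scales (a feature of $j$-lacunary trees) gives $\sum_T\|F_T\|_{L^2}^2 \approx \sum_P |b_P|^2$, so it suffices to control $\|F\|_{L^2(\widetilde\chi_I^{-10})}^2$ by $\|N_{\T}\|_{L^{\infty}}^\delta \sum_T \|F_T\|_{L^2}^2$.

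Second, I telescope across scales. Setting $\xi_T := c(\omega_{(P_T)_j})$, Lemma~\ref{truncatebyconvlemma} expresses the tree-truncated partial sum as
\[
G_k(x) \;:=\; \sum_{\substack{P\in\P\\|I_P|\ge 2^k}} b_P\,\phi_{P,j}(x) \;=\; \sum_{T\in\T}\e^{2\pi\i\xi_T x}\bigl(\zeta_k\ast[\e^{-2\pi\i\xi_T\,\cdot}F_T]\bigr)(x).
\]
Since $G_k\to F$ as $k\to-\infty$ and $G_k\to 0$ as $k\to +\infty$, one has $|F(x)| \le \|G_k(x)\|_{V^r_k}$ for any $r>2$. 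Because each $F_T$ is essentially Fourier-localized near $\xi_T$ (by $j$-lacunarity), $G_k$ agrees, up to rapidly-decaying Schwartz tails handled by the decay in Definition~\ref{d.wavepacket}, with the smooth multi-frequency Fourier projection of $F$ onto $\bigcup_T(\xi_T-C2^{-k},\xi_T+C2^{-k})$, so Theorem~\ref{t.bourgain-varnorm} (via Proposition~\ref{p.bourgain-varnorm-sep} for the smooth case) yields
\[
\|F\|_{L^2(\widetilde\chi_I^{-10})} \;\le\; \bigl\|\|G_k(x)\|_{V^r_k}\bigr\|_{L^2_x} \;\lesssim_\epsilon\; N^\epsilon\,\|F\|_{L^2},
\]
where $N := \#\{\xi_T : T\in\T\}$. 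Combining with the standard non-maximal Bessel bound of Lemma~\ref{nonmaxbessellemma}, namely $\|F\|_{L^2}\lesssim \log(1+\|N_{\T}\|_{L^{\infty}})\bigl(\sum_T\|F_T\|_{L^2}^2\bigr)^{1/2}$, one obtains
\[
\|F\|_{L^2(\widetilde\chi_I^{-10})}^2 \;\lesssim\; N^{2\epsilon}\,\log^2(1+\|N_{\T}\|_{L^{\infty}})\,\sum_T\|F_T\|_{L^2}^2.
\]

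The main obstacle will be bounding $N$ polynomially in $\|N_{\T}\|_{L^{\infty}}$, independent of the total tree count $|\T|$ and of $|I|$, after which the above display delivers \eqref{e.delta-loss}. This is where strong $j$-disjointness is essential: at each dyadic scale $2^n$ the distinct frequencies $\xi_T$ with $|I_T|=2^n$ lie on a grid of step $2^{-n}$, and conditions \eqref{firststrongdisjoint}--\eqref{thirdstrongdisjoint} of Definition~\ref{d.strong-j-disjoint} prevent too many tree tops from clustering around any single frequency. A scale-by-scale pigeonholing of the trees in $\T$, together with the freedom to take $\delta>0$ arbitrarily small, should yield $N\lesssim \|N_{\T}\|_{L^{\infty}}^{O(1)}$; alternatively one may apply Theorem~\ref{t.bourgain-varnorm} scale-by-scale and sum the per-scale estimates using the spatial-overlap bound $\sum_T |I_T| \le \|N_{\T}\|_{L^{\infty}}|I|$. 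Minor additional care will be needed for the smoothing tails of $\zeta_k$ and for the boundary scales $2^k$ outside the range where Lemma~\ref{truncatebyconvlemma} directly applies.
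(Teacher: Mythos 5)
Your proposal correctly isolates the crux — the number $N$ of distinct top frequencies fed into Theorem~\ref{t.bourgain-varnorm} — but the proposed resolution of that crux does not work, and this is exactly where the paper's proof diverges from yours. The claim that strong $j$-disjointness forces $N \lesssim \|N_{\T}\|_{L^\infty}^{O(1)}$ globally is false: take $\T$ to consist of trees with pairwise disjoint top intervals $I_T$, all at one scale, with distinct $\xi_T$. Then $\|N_{\T}\|_{L^\infty} = 1$ while $N = |\T|$ is arbitrary (strong $j$-disjointness imposes no constraint whatsoever between trees with disjoint spatial supports). So a single global application of Theorem~\ref{t.bourgain-varnorm} to $F = \sum_T F_T$ cannot give the desired bound, and neither "pigeonholing plus small $\delta$" nor the uncontrolled "alternatively" sketch repairs this. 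The paper's proof of Proposition~\ref{p.delta-loss} is structured precisely to avoid this obstruction: it proceeds through the weak-type reduction of Proposition~\ref{p.delta-loss-weaktype}, the sparseness reduction of Proposition~\ref{p.vnbi-sparse}, and then the layered decomposition of top intervals $\calJ = \bigcup_m \calJ_m$ in Proposition~\ref{p.lastred}, so that by the time Theorem~\ref{t.bourgain-varnorm} is invoked (in \eqref{estwithbg} via the identity \eqref{projectionworks}), the frequency set is $\{\xi_T : T \in \T_J\}$, a \emph{local} collection whose cardinality is $\le L = \|N_{\T}\|_{L^\infty}$ because every $T \in \T_J$ has $J \subset I_T$ and the overlap at $J$ is bounded by $L$. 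This spatial localization is the essential missing idea in your outline.

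There are also two secondary issues worth flagging. First, after the telescoping via Lemma~\ref{truncatebyconvlemma} you treat $G_k$ as a "smooth multi-frequency projection" and cite Proposition~\ref{p.bourgain-varnorm-sep}, but that proposition requires the frequencies to be $1$-separated, which the $\xi_T$ are not; handling unseparated frequencies is precisely what Theorem~\ref{t.bourgain-varnorm} adds (via Proposition~\ref{bourgainrmprop} and the flat/sharp decomposition), and in the paper the projection that appears, \eqref{projectionworks}, is the \emph{sharp} projection $\Pi_k$ — its validity hinges on the separation $\dis(\xi_T,\omega_{P_j}) \ge |\omega_{P_j}|/4$ supplied by condition~\eqref{thirdstrongdisjoint} of strong $j$-disjointness, which your outline doesn't use. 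Second, once you insert the pointwise bound $|F(x)| \le \|G_k(x)\|_{V^r_k}$ you still need the weighted $L^2(\widetilde\chi_I^{-10})$ norm on the left, whereas Theorem~\ref{t.bourgain-varnorm} is an unweighted $L^2$ statement; the paper handles this by first disposing of the weight (rewriting $\<f,\phi_{P,j}\> = \<f\widetilde\chi_I^{10}, \widetilde\chi_I^{-10}\phi_{P,j}\>$ and observing that $\widetilde\chi_I^{-10}\psi_{P,j}$ is again a wave packet of high order), a step your proposal compresses without making precise.
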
 
Lemma~\ref{l.self-improving} below in turn is a result from \cite{dtt2008} where it was proved using a series of interesting Lemmas. To keep the current paper self-contained, we'll sketch  a direct proof, which simplifies some arguments in \cite{dtt2008}. To formulate the lemma, we first fix some notations. For $\S\subset \T$ let $N_{\S}$ denote $\sum_{T\in \S} 1_{I_T}$, and define
\begin{eqnarray*}
\|\S\|_{BMO}:=\sup_{I \ dyadic} \frac 1 {|I|} \sum_{T\in \S: I_T\subset I} |I_T| \ \ .
\end{eqnarray*}

\begin{lemma}\label{l.self-improving} Let $A,B>0$ and $0<\delta<1$. Let $\T$ be a collection of trees. If for every subset $\S$ of $\T$ it holds that $\|N_{\S}\|_1 \le A \|N_{\S}\|_\infty^\delta$ and $\|\S\|_{BMO}   \le B  \|N_{\S}\|_\infty^\delta$
then
\begin{eqnarray*}
\|\T\|_{BMO} \lesssim_\delta B^{1/(1-\delta)} \quad , \quad \|N_{\T}\|_1 \lesssim_\delta A B^{\delta/(1-\delta)}
\end{eqnarray*}
\end{lemma}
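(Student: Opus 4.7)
The statement is of self-improvement type: the hypothesis bounds every subcollection $\S \subseteq \T$ by a factor of $\|N_{\S}\|_\infty^\delta$, and the conclusion eliminates that factor. My plan is to bootstrap the hypothesis on carefully chosen subcollections so that the exponent $\delta$ compounds to $\sum_k \delta^k = 1/(1-\delta)$.

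To establish the BMO bound, I would introduce the ``tame'' subcollection
\[
\S_\lambda \;:=\; \{\, T \in \T : \exists\, x \in I_T \text{ with } N_{\T}(x) \le \lambda \,\}.
\]
The key observation is that $\|N_{\S_\lambda}\|_\infty \le \lambda$: for any nested chain $T_1 \subsetneq T_2 \subsetneq \cdots \subsetneq T_k$ in $\S_\lambda$ over a common point, any witness $x \in I_{T_1}$ of $T_1 \in \S_\lambda$ lies in every $I_{T_i}$ and so forces $k \le N_{\T}(x) \le \lambda$. Applying the BMO hypothesis to $\S_\lambda$ then gives $\|\S_\lambda\|_{\BMO} \le B\lambda^\delta$. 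The remaining trees $\T \setminus \S_\lambda$ are confined to the level set $E_\lambda = \{N_{\T} > \lambda\}$; I would group them by the stopping intervals of a Calder\'on--Zygmund decomposition of the BMO density of $\T$ at scale $\lambda$ and recurse within each stopping interval with a doubled level. Balancing $\lambda$ against $\|\T\|_{\BMO}$ and summing the resulting geometric series yields $\|\T\|_{\BMO}^{1-\delta} \lesssim_\delta B$, i.e.\ $\|\T\|_{\BMO} \lesssim_\delta B^{1/(1-\delta)}$.

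For the $L^1$ bound, with the BMO bound secured I would apply the same decomposition at the distinguished level $\lambda \sim B^{1/(1-\delta)}$. The $L^1$ hypothesis on $\S_\lambda$ produces $\|N_{\S_\lambda}\|_1 \le A\lambda^\delta \lesssim A B^{\delta/(1-\delta)}$, and the level-set estimate $|E_\lambda| \le \|N_{\T}\|_1/\lambda$ combined with a recursive application of the first step to each component of $E_\lambda$ supplies the matching contribution from $\T \setminus \S_\lambda$.

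The main obstacle will be the recursive bookkeeping in the BMO step: the iteration must produce a geometric series whose common ratio depends only on~$\delta$ (not on $|\T|$ or on the scales involved), and the $\lambda$-balancing must match $\lambda^{1-\delta} \sim B$ at every stage of the recursion. The algebraic identity $\sum_k \delta^k = 1/(1-\delta)$ is the source of the exponent $1/(1-\delta)$ appearing in the conclusion, and capturing this compounding cleanly across infinitely many scales is the technical heart of the proof.
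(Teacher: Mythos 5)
Your key observation is exactly the paper's mechanism: the ``tame'' subcollection $\S_\lambda$ (trees whose top interval touches the sublevel set of $N_{\T}$) has $\|N_{\S_\lambda}\|_\infty \le \lambda$ because a witness point for the smallest tree in a nested chain is seen by every tree in the chain. The $L^1$ step you describe is also the paper's: apply the $L^1$ hypothesis to $\S_\lambda$, and bound $\|N_{\T\setminus\S_\lambda}\|_1$ by $\|\T\|_{\BMO}\,|E_\lambda|$ with $|E_\lambda| \le \|N_{\T}\|_1/\lambda$, then absorb with $\lambda$ a large multiple of $B^{1/(1-\delta)}$. That part is sound.

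The BMO step as written has a gap, however. You define $\S_\lambda$ using the \emph{global} counting function $N_{\T}$, and for the BMO bound one must test the density over every dyadic $I_0$. If $I_0 \subset E_\lambda$ (which can easily happen, e.g.\ when many trees with $I_T \supsetneq I_0$ stack over $I_0$), then every $T$ with $I_T \subset I_0$ has $N_{\T} > \lambda$ throughout $I_T$, so $\T_{I_0} \cap \S_\lambda = \emptyset$: the tame part of $\T_{I_0}$ is empty, and your stopping-interval/Markov step bounds $|E_\lambda \cap I_0|$ only by $|I_0|$, giving the trivial estimate $\frac{1}{|I_0|}\sum_{T \in \T_{I_0}}|I_T| \le \|\T\|_{\BMO}$. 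The fix, which is what the paper's proof does (phrased there as a cap on the nesting count $|\{S : I_T \subset I_S \subset I_0\}|$), is to \emph{localize}: replace $N_{\T}$ by $N_{\T_{I_0}}$ where $\T_{I_0} = \{T : I_T \subset I_0\}$, and set $\S_\lambda^{I_0} = \{T \in \T_{I_0} : \exists\, x \in I_T,\ N_{\T_{I_0}}(x) \le \lambda\}$. Then $\{N_{\T_{I_0}} > \lambda\} \subset I_0$ with measure at most $\frac{1}{\lambda}\sum_{T\in\T_{I_0}}|I_T|$, and passing to maximal dyadic intervals in this set together with $\|\T\|_{\BMO}$ gives the absorbing inequality
\[
\frac{1}{|I_0|}\sum_{T\in\T_{I_0}}|I_T| \;\le\; B\lambda^{\delta} \;+\; \frac{\|\T\|_{\BMO}}{\lambda}\cdot\frac{1}{|I_0|}\sum_{T\in\T_{I_0}}|I_T|.
\]
Taking $\lambda = 2\|\T\|_{\BMO}$ and then the supremum over $I_0$ gives $\|\T\|_{\BMO} \le 2^{1+\delta}B\|\T\|_{\BMO}^{\delta}$, hence the claim.

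Once localized, the iterated ``doubled-level'' recursion and the geometric series $\sum_k \delta^k$ you invoke are not needed: a \emph{single} absorption, with $\lambda$ pegged to $\|\T\|_{\BMO}$, already produces the self-improvement $x \le C B x^{\delta}$ whose fixed point carries the exponent $1/(1-\delta)$. Your geometric-series heuristic is the right intuition for \emph{why} that exponent appears (it is what iterating $x \mapsto Bx^{\delta}$ would give), but the clean proof—both the paper's and the corrected version of yours—runs through a one-shot bootstrap rather than a genuine multi-scale recursion with a convergence bookkeeping problem. Note also that this argument implicitly assumes $\|\T\|_{\BMO} < \infty$ a priori (guaranteed when $\T$ is finite, which is the case in the application), a hypothesis worth flagging since absorbing an infinite quantity is not legitimate.
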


\proof[Proof of Lemma~\ref{l.self-improving}] We first show that $\|\T\|_{BMO} \le (3B)^{1/(1-\delta)}$.   It suffices to show that for every dyadic interval $I_0$ it holds that
\begin{eqnarray}\label{e.forestBMO}
\frac 1 {|I_0|} \sum_{T\in \T: I_T\subset I_0} |I_T| \le \frac 1 2\|\T\|_{BMO} +  3^{\delta/(1-\delta)} B^{1/(1-\delta)} \ \ 
\end{eqnarray}
Fix $I_0$. Let
$\S$ contains all elements $T\in \T$ such that $I_T\subset I_0$ and the set $\{S\in \T: I_T\subset I_S \subset I_0\}$ contains  at most $ (3B)^{1/(1-\delta)}$ elements.  Clearly $\|N_{\S}\|_\infty \le (3B)^{1/(1-\delta)}$, therefore by the given assumption we have
\begin{eqnarray}\label{e.SBMO}
\|\S\|_{BMO} \le  B(3B)^{\delta/(1-\delta)}  = 3^{\delta/(1-\delta)} B^{1/(1-\delta)}\ \ . 
\end{eqnarray}
Let $\J$ be the set of maximal dyadic intervals $J \subset I_0$ such that the set $\{S\in \T: J\subset I_S \subset I_0\}$ contains more than $(3B)^{1/(1-\delta)}$ elements. Clearly, for every $T\in \T\setminus \S$ such that $I_T\subset I_0$, $I_T$ is contained in one of these $J$'s. It follows that
\begin{eqnarray}
\label{e.forestBMO1}  \frac 1 {|I_0|} \sum_{\substack{T\in \T \setminus \S\\ I_T\subset I_0}} |I_T| \quad \le \quad \frac 1 {|I_0|} \sum_{J \in \J} \sum_{\substack{T\in \T\\ I_T\subset J}} |I_T| &\le& \frac{\|\T\|_{BMO} }{|I_0|} \sum_{J\in \J} |J| \ \ .
\end{eqnarray}
By maximality of $J$, there exists $T\in \T$ such that $I_T=J$. Let $\S_J$ denote the collection of such $T$, then $\|\S_J\|_{BMO}=\|N_{\S_J}\|_{\infty} = |\S_{J}|$, therefore using the given assumption we obtain $\|N_{\S_J}\|_{\infty} \le B^{1/(1-\delta)}$. For every $x\in J$ it follows that 
\begin{eqnarray*}N_{\S}(x) &\ge& \sum_{T\in \T: J\subsetneq I_T \subset I_0} 1_{I_T}(x)  \quad \ge \quad \Big(\sum_{T\in \T: J\subset I_T \subset I_0} 1_{I_T}(x)\Big) - B^{1/(1-\delta)} \\
&\ge& (3^{1/(1-\delta)}-1) B^{1/(1-\delta)} \quad > \quad 2 \cdot 3^{\delta/(1-\delta)} B^{1/(1-\delta)} \ \ .
\end{eqnarray*}
Together with \eqref{e.forestBMO1}, we obtain
\begin{eqnarray*}
\frac 1 {|I_0|} \sum_{T\in \T \setminus \S: I_T\subset I_0} |I_T|   
&<& \frac{\|\T\|_{BMO}}{|I_0|} \frac{\|N_{\S}\|_{L^1(I_0)}}{2 \cdot 3^{\delta/(1-\delta)}} \quad \le \quad \|\T\|_{BMO}  \frac{\|\S\|_{BMO}}{2 \cdot 3^{\delta/(1-\delta)}} \ \ .
\end{eqnarray*}
Using \eqref{e.SBMO},  \eqref{e.forestBMO} immediately follows, completing  the proof of $\|\T\|_{BMO} \lesssim_\delta B^{1/(1-\delta)}$. 

We now free the variables $I_0$, $\S$, $\J$ to be used for other purposes below. 

Fix a large   constant $C>0$ to be chosen later. Let $\S$ contain all  $T\in \T$ such that $I_T$ is not a subset of $\{x: N_{\T}(x) > C B^{1/(1-\delta)}\}$. It is clear that $\|N_{\S}\|_\infty \le C B^{1/(1-\delta)}$, so by the given hypothesis $\|N_{\S}\|_1 \le C^\delta AB^{\delta/(1-\delta)}$. It suffices to show that 
\begin{eqnarray}\label{e.NTabsorb}
\|N_{\T \setminus \S}\|_1 = O_\delta(C^{-1}\|N_{\T}\|_1)  \ \ . 
\end{eqnarray}
Indeed, from \eqref{e.NTabsorb} by choosing $C$ large we obtain $\|N_{\T \setminus \S}\|_1 \le 1/2 \|N_{\T}\|_1$, thus $\|N_{\T}\|_1 \le 2 \|N_{\S}\|_1$ which implies the desired estimate.

Let $\J$ be the collection of  maximal dyadic subintervals of $\{x: N_{\T}(x) > C B^{1/(1-\delta)}\}$. It follows that if $T\in \T\setminus \S$ then $I_T$ is a subset of some element of $\J$. Therefore
\begin{eqnarray*}
\|N_{\T\setminus \S}\|_1  &\le& \sum_{J\in \J} \sum_{T\in \T: I_T\subset J} |I_T|  \quad \le \quad \sum_{J\in \J} |J| \|\T\|_{BMO}\\
&\le& \|\T\|_{BMO} |\{x: N_{\T}(x) > C B^{1/(1-\delta)}\}| \quad \le \quad \|\T\|_{BMO} \frac{\|N_{\T}\|_1}{CB^{1/(1-\delta)}} \ \ .
\end{eqnarray*}
Since  $\|\T\|_{BMO}=O_\delta(B^{1/(1-\delta)})$, we obtain $\|N_{\T\setminus \S}\|_1 = O_\delta(C^{-1} \|N_{\T}\|_1)$, as desired. \endproof


\subsection{Proof of Theorem~\ref{t.vnbi}, reduction 2}
We first note that   \eqref{e.delta-loss} follows from the unweighted version where the factor $\widetilde \chi_I^{10}$ is not on the right hand side. Indeed, writing $\<f,\phi_{P,j}\> = \<f\widetilde \chi_I^{10}, \widetilde \chi_I^{-10}\phi_{P,j}\>$ and using the fact that $\widetilde \chi_I^{-10}(x)$ is also a polynomial in $x$ (which implies that $\widetilde \chi_I^{-10}\psi_{P,j}$ is still a wave packet adapted to $P_j$ of order sufficiently large, recall also that $\phi_{P,j}$ and $\psi_{P,j}$ are the same if $j=1,2$ and related by a variational factor if $j=3$), \eqref{e.delta-loss} follows from applying the unweighted version to $f\widetilde \chi_I^{10}$ and the rescaled wave packets.


We now show that the unweighted \eqref{e.delta-loss} follows from the following proposition.

\begin{proposition}\label{p.delta-loss-weaktype} Let $\T$ be strongly $j$-disjoint. Let $\P=\bigcup_{T \in \T} T$. Then for every $L\ge \|N_{\T}\|_\infty$ there exists $\P^\ast\subset \P$ with the following two properties:
\begin{eqnarray}
\label{e.delta-loss-weak1}
\sum_{P \in \P\setminus \P^\ast} |\<f,\phi_{P,j}\>|^2 &\lesssim_{\delta}& L^\delta 
\|f\|_2^2 \ \ , \\  
\label{e.delta-loss-weak2}
|\bigcup_{P\in \P^\ast} I_P| &\lesssim_\delta&  L^{-1} \sum_{T\in \T} |I_T| \ \ .
\end{eqnarray}
\end{proposition}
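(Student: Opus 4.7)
The plan is to prove this weak-type Bessel inequality by duality combined with the sharp multi-frequency variation-norm estimate (Theorem~\ref{t.bourgain-varnorm}). The key observation is that Bourgain's $N^\epsilon$ loss for $N$ sharp frequency projections should replace the logarithmic Rademacher--Menshov loss that powers the standard Bessel inequality (Lemma~\ref{nonmaxbessellemma}), at the cost of a small exceptional spatial set $\bigcup_{P \in \P^\ast} I_P$.

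For each tree $T \in \T$ I would associate the top frequency $\xi_T := c(\omega_{(P_T)_j})$ and the top scale $s_T := -\log_2 |I_T|$. The bad set $\P^\ast$ is selected by a greedy stopping-time procedure applied to the tree tops: put $P$ into $\P^\ast$ whenever $I_P$ lies in a region where the local tree-top density is too large, or where a pigeonholed local mass condition involving $f$ fails. A Chebyshev / Calder\'on--Zygmund-type argument based on $\|N_\T\|_1 = \sum_T |I_T|$ should then give $|\bigcup_{P \in \P^\ast} I_P| \lesssim L^{-1} \sum_T |I_T|$ once the thresholds are tuned. By routine localization one may assume all $I_T$ sit inside a common ambient dyadic window, simplifying the combinatorics.

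By duality, the Bessel bound on $\P \setminus \P^\ast$ reduces to controlling $\bigl\|\sum_{P \in \P \setminus \P^\ast} c_P \phi_{P,j}\bigr\|_{L^2}$ for unit $\ell^2$-sequences $(c_P)$. Grouping by trees, $\sum_P c_P \phi_{P,j} = \sum_T g_T$, each $g_T$ has Fourier support in a small neighborhood of $\xi_T$ by $j$-lacunarity and strong disjointness. By Lemma~\ref{truncatebyconvlemma}, the scale-truncated partial sums inside $T$ equal convolutions of $g_T$ with a rescaled Schwartz kernel modulated back to $\xi_T$, so variation over the truncation scale coincides with differences of \emph{sharp} multi-frequency projections onto $\{\xi_T : T \in \T\}$. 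Combining this with a Rademacher-sign expansion of the $c_P$ and the strong $j$-disjointness across trees, Theorem~\ref{t.bourgain-varnorm} delivers an $|\T|^{\epsilon}$ loss, which after pigeonholing $|\T|$ into $L^{O(1)}$-sized buckets yields the desired $L^{\delta}$ factor.

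The main obstacle will be coordinating the tree-level multi-frequency picture (which Theorem~\ref{t.bourgain-varnorm} controls) with the tile-level coefficient picture (which the Bessel bound requires): inside each tree the tri-tiles range across many spatial scales, and the top scales $s_T$ themselves vary across trees, so the variation in scale must be organized so as to simultaneously capture the inter-tree multi-frequency structure and the intra-tree orthogonality. Calibrating the stopping-time threshold defining $\P^\ast$ so as to simultaneously achieve $|\bigcup_{P \in \P^\ast} I_P| \lesssim L^{-1} \sum_T |I_T|$ and to permit a single application of Theorem~\ref{t.bourgain-varnorm} (with the $|\T|^\epsilon$ loss safely absorbed into $L^\delta$) is the delicate balancing act at the heart of the argument.
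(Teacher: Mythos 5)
Your high-level instincts are largely right: duality, the tree-top frequencies $\xi_T=c(\omega_{(P_T)_j})$, an exceptional set of measure $\lesssim L^{-1}\sum_T|I_T|$, Rademacher--Menshov (Lemma~\ref{vnrmlemma}) with the standard Bessel bound (Lemma~\ref{nonmaxbessellemma}) for cross-tree orthogonality, and Theorem~\ref{t.bourgain-varnorm} for the sharp multi-frequency variation are all ingredients of the paper's argument. The genuine gap is in where the small power loss comes from and, correspondingly, how Theorem~\ref{t.bourgain-varnorm} can be invoked at all. You claim the theorem ``delivers an $|\T|^\epsilon$ loss, which after pigeonholing $|\T|$ into $L^{O(1)}$-sized buckets yields the desired $L^\delta$.'' But $|\T|$ can be arbitrarily large relative to $L=\|N_\T\|_\infty$ (take many trees with pairwise disjoint tops: then $L=1$ while $|\T|$ is unbounded), so an $|\T|^\epsilon$ loss is worthless, and one cannot pigeonhole $\T$ into $L^{O(1)}$-sized buckets while preserving the global $\ell^2$ control: tops from different buckets can be nested and produce cross terms. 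What actually makes the argument work is an organizational device you did not anticipate: after a sparsity pigeonholing in an auxiliary parameter $A$, the tree tops are stratified into $O(A^2L)$ nested ``layers'' $\calJ_1,\dots,\calJ_{O(A^2L)}$ of pairwise disjoint intervals, and $\P$ is split into $\P_J$ (tiles at the top scale of a layer interval $J$) and $\P_{<J}$ (tiles strictly below $J$ but above the next layer). The variation in scale is then handled inter-layer by Rademacher--Menshov, and intra-layer by Theorem~\ref{t.bourgain-varnorm} applied \emph{locally} to each $\P_{<J}$, where the relevant frequency set $\{\xi_T : T\in\T_J\}$ has size $\le L$ since only trees with $J\subset I_T$ contribute. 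The $L^\delta$ factor is then assembled from an $L^\epsilon$ (not $|\T|^\epsilon$) loss by tuning $A=L^{\epsilon/(1+\epsilon)}$.

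Two secondary issues. First, the bad set $\P^\ast$ is chosen purely combinatorially, with no reference to $f$: it collects tiles whose time intervals lie in a thin boundary neighborhood of the tree-top endpoints or in the set $\{\sum_T(\ma 1_{I_T})^2>L^2\}$. Your ``pigeonholed local mass condition involving $f$'' is a red herring, and ``routine localization to a common ambient dyadic window'' is not available because $\sum_T|I_T|$ can be arbitrarily large. Second, Lemma~\ref{truncatebyconvlemma} provides a \emph{smooth} (L\'epingle-compatible) convolution representation for a \emph{single} tree; it does not produce sharp multi-frequency projections. What makes the scale truncation literally equal to a sharp projection (the identity \eqref{projectionworks}) is the nonstandard clause \eqref{thirdstrongdisjoint} in the definition of strong $j$-disjointness, which forces $\dis(\xi_T,\omega_{P_j})\ge |\omega_{P_j}|/4$ for $P\in\P_{<J}$, $T\in\T_J$, so that the scale-$k$ partial sum is exactly the sharp Fourier projection onto $\bigcup_T(\xi_T-c2^{-k},\xi_T+c2^{-k})$. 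As written, without the layering and without this structural use of \eqref{thirdstrongdisjoint}, the proposal does not close.
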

Indeed, apply Proposition~\ref{p.delta-loss-weaktype} with $L=C\|N_T\|_\infty$ for a sufficiently large $\delta$-dependent constant $C$. Now, to get (the unweighted) \eqref{e.delta-loss} it suffices to show 
\begin{eqnarray}\label{e.delta-loss-weak3}
\sum_{P\in \P^\ast} |\<f,\phi_{P,j}\>|^2 \le \frac 1 2 \sum_{P\in \P} |\<f,\phi_{P,j}\>|^2  \ \ .
\end{eqnarray}
Let $I_{P_0}$ be a maximal interval in $\{I_P, P\in \P^\ast\}$, and remove from $\P^\ast$ all tri-tiles $P$ such that $P$ is in the same tree as $P_0$ and $I_P\subset I_{P_0}$. We repeat this process with what is left of $\P^\ast$. This algorithm gives a collection of tree $\T^\ast$ such that $\{I_T, T\in \T^\ast\}$ cover $\P^\ast$ while $\sum_{T\in \T^\ast} 1_{I_T} \le \sum_{T\in \T} 1_{I_T}$. Now, using \eqref{e.size-hypothesis} and \eqref{e.delta-loss-weak2},  \eqref{e.delta-loss-weak3} follows from the following sequence of estimates and choosing $C$ large:
\begin{eqnarray*}
\sum_{P\in \P^\ast} |\<f,\phi_{P,j}\>|^2 &\le& \sum_{T\in \T^\ast} |I_T| \quad \le  \quad \|N_{\T^\ast}\|_\infty |\bigcup_{P\in \P^\ast} I_P|  \quad \lesssim \\
&\lesssim_\delta& (L/C)L^{-1} \sum_{T\in \T} |I_T|  \quad \le \quad \frac 2 C \sum_{P\in \P} |\<f,\phi_{P,j}\>|^2 \ \ .
\end{eqnarray*}

\subsection{Proof of Theorem~\ref{t.vnbi}, reduction 3}

In  this section we reduce Proposition~\ref{p.delta-loss-weaktype} to the following more technical result. We first fix some notations. Given $A>1$ and $d \in \{0,1,2\}$,    a collection of intervals $\mathcal{I} \subset \calG_0$ is   $(A,d)$-sparse if   
\begin{itemize}
\item for each $I \in \mathcal{I}$, $AI$ is $d$-regular (see Section~\ref{s.terms});
\item for each $I,I' \in \mathcal{I}$ with $|I| > |I'|$, we have $|I| \geq 2^{100A}|I'|$;
\item for each $I,I' \in \mathcal{I}$ with $|I| = |I'|$, we have $\dis(I,I') \geq 100A|I'|$.
\end{itemize} 

\begin{proposition} \label{p.vnbi-sparse}
Let $A, L, \eta \in (1,\infty)$ and $\epsilon>0$. 
Let $\T$ be a collection of strongly $j$-disjoint trees with
$\|N_{\T}\|_{L^{\infty}} \leq L$. 
Let $\P = \bigcup_{T \in \T}T,$ and suppose that $\{I_P : P \in \P\} \cup \{I_T : T \in \T\}$
is $(A,d)$ sparse. Then, there exists $\P^* \subset \P$  such that
\begin{eqnarray*}
|\bigcup_{P \in \P^*} I_P| &\lesssim_{\eta}& (A^{-\eta} + L^{-\eta}) \sum_{T \in \T}|I_T| \ \ ,  \ \ and \\
\sum_{P \in \P \setminus \P^*} |\<f,\phi_{P,j}\>|^2 &\lesssim_{\eta,\epsilon}& ((AL)^{\epsilon} + L^8A^{3-\eta})^2 \|f\|_{L^2}^2.
\end{eqnarray*} 
\end{proposition}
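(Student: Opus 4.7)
The plan is to identify $\P^*$ as those tri-tiles whose spatial interval $I_P$ lies inside a ``bad set'' $E$ on which a variation-norm multi-frequency Fourier projection of $f$ is too large. Set $\Omega := \{\xi_T : T \in \T\}$ where $\xi_T = c(\omega_{(P_T)_j})$, so that $|\Omega| \le |\T|$, and observe that the $(A,d)$-sparseness assumption forces the set $\Omega$, after rescaling at each scale, to be $A$-separated. Letting $\Pi_k$ denote the sharp multi-frequency projection at scale $2^{-k}$ onto $\Omega$ (as in Section~\ref{s.varnorm-multiplier}), Theorem~\ref{t.bourgain-varnorm} yields $\|\Pi_k f(x)\|_{L^2_x(V^r_k)} \lesssim_\epsilon |\Omega|^\epsilon \|f\|_{L^2}$ for any $r>2$.

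I would then define $E_\lambda := \{x : \|\Pi_k f(x)\|_{V^r_k} > \lambda\}$, whose measure is bounded by $|\Omega|^{2\epsilon}\lambda^{-2}\|f\|_2^2$ via Chebyshev. The set $\P^*$ is taken to consist of those $P$ with $I_P \subset c\cdot E_\lambda$ for a mild enlargement (to swallow wave-packet tails). Choosing the threshold $\lambda$ of order $(A^{\eta/2}+L^{\eta/2})\|f\|_2\,(\sum_T |I_T|)^{-1/2}$ converts the Chebyshev bound into the desired estimate $|\bigcup_{P \in \P^*} I_P| \lesssim_\eta (A^{-\eta}+L^{-\eta})\sum_T|I_T|$, since by sparseness $|\Omega|\le |\T|$ is comparable to $\sum_T|I_T|/\min_T|I_T|$ and the $|\Omega|^{2\epsilon}$ loss can be absorbed into the arbitrary $\eta$-tolerance.

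For the second estimate, the main engine is duality combined with Lemma~\ref{truncatebyconvlemma}: within each tree $T$, the partial sum $\sum_{P \in T,\,|I_P|\ge 2^k} b_P \phi_{P,j}$ can be realized as a convolution at scale $2^k$ of a modulated version of $\sum_{P \in T} b_P \phi_{P,j}$. Passing to the dual, the inner products $\langle f,\phi_{P,j}\rangle$ are recovered from $V^r$-increments of $\Pi_k f$, modulated by $\xi_T$, and evaluated in $I_P$. For $P \in \P \setminus \P^*$, the interval $I_P$ contains a point where the modulated variation is at most $\lambda$. A Rademacher--Menshov dualization (Lemma~\ref{vnrmlemma}) produces an $O(\log(1+L))$ factor, while strong $j$-disjointness and Lemma~\ref{nonmaxbessellemma} control the resulting orthogonality. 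The dominant $(AL)^\epsilon$ factor is then the product of the $|\Omega|^\epsilon$ loss from Theorem~\ref{t.bourgain-varnorm} and the Rademacher--Menshov logarithm, each of which I would absorb into a single $\epsilon$-power of $AL$.

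The principal obstacle I anticipate is the treatment of wave-packet tails lying outside $c\cdot E_\lambda$: these contributions cannot be controlled by the variation-norm bound at a single point and must instead be absorbed via the high-order decay \eqref{e.wavepacketdecay} of $\psi_{P,i}$, at the cost of a polynomial-in-$A$ factor (this is where $A^{3-\eta}$ appears, reflecting the $\eta$-dependent trade-off with the decay order $M$) and a polynomial-in-$L$ factor (the $L^8$, coming from summing $L$ overlaps of tail envelopes across trees). A secondary technical issue is aligning the scales in Theorem~\ref{t.bourgain-varnorm} with the sparseness gap $2^{100A}$; the sparseness both provides the $A$-separation needed to invoke the multi-frequency estimate and ensures that, between consecutive relevant scales $|I_P|$, the projection $\Pi_k f$ is essentially constant, so that the $V^r_k$ norm effectively reduces to the discrete variation over the sparse scales actually present.
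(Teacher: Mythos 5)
Your approach is genuinely different from the paper's, and it has a fatal gap at its first step. You set $\Omega = \{\xi_T : T\in\T\}$, apply Theorem~\ref{t.bourgain-varnorm} to get a $V^r$ bound with loss $|\Omega|^\epsilon \le |\T|^\epsilon$, and then claim this loss can be "absorbed into the arbitrary $\eta$-tolerance." This is false: $|\T|$ is not controlled by $A$ or $L$ in any way. The hypothesis $\|N_\T\|_\infty \le L$ bounds only the pointwise overlap of top intervals; one can have arbitrarily many spatially disjoint trees, making $|\T|$ (and hence $|\Omega|$) as large as one likes relative to $A$, $L$, and $\sum_T|I_T|$. Your Chebyshev step then produces a bound $|\Omega|^{2\epsilon}(A^{-\eta}+L^{-\eta})\sum_T|I_T|$, and the $|\Omega|^{2\epsilon}$ factor cannot be traded for a smaller power of $A^{-\eta}+L^{-\eta}$ because it lives on a completely independent scale. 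Your remark that $|\T|$ is "comparable to $\sum_T|I_T|/\min_T|I_T|$" does not help, as $\min_T|I_T|$ is also uncontrolled. The paper avoids this entirely by making $\P^*$ a \emph{purely geometric} object, independent of $f$: it takes $\P^* = \{P : I_P \subset E_1\cup E_2\}$ where $E_1$ is the union of small neighborhoods of $\partial I_T$ (of width $D_\eta(A^{-\eta}+L^{-\eta})|I_T|$) and $E_2 = \{\sum_{T}(\ma 1_{I_T})^2 > L^2\}$, whose measures are controlled directly by $\sum_T|I_T|$ via Fefferman--Stein. Crucially, the multi-frequency estimate Theorem~\ref{t.bourgain-varnorm} is invoked only much later, in the proof of Proposition~\ref{p.lastred} (display \eqref{estwithbg}), and there it is applied \emph{locally} to a single $\P_{<J}$, where the relevant frequency set $\{\xi_T : T\in\T_J\}$ has cardinality $\le L$ by the overlap hypothesis. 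That is what produces the $(AL)^\epsilon$ term legitimately.

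There is a second, downstream problem: your $L^2$ bound for $\P\setminus\P^*$ relies on "$I_P$ contains a point where the modulated variation is at most $\lambda$," which is a pointwise observation and does not obviously convert to the quadratic bound you need. More importantly, the paper's geometric choice of $\P^*$ is precisely what makes the tail-error estimates (Lemmas~\ref{l.E(x)}, \ref{l.E1(x)}, \ref{l.E2(x)}) work: the condition \eqref{e.intcond}, that $I_P$ stays at distance $\gtrsim D_\eta A^{-\eta}|I_T|$ from $\partial I_T$, is exactly the separation needed to gain powers of $A^{-\eta}$ from the wave-packet decay. An $f$-dependent exceptional set gives no such spatial separation, so the $A^{3-\eta}$ and $L^8$ factors you hope to extract from "summing tail envelopes" have no mechanism behind them. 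I'd recommend restructuring along the paper's two-stage scheme: first remove a geometric $\P^*$ to obtain the sparsity/overlap conditions of Proposition~\ref{p.lastred}, then prove that proposition by decomposing $\P$ into the $\P_J$ and $\P_{<J}$ pieces, using Rademacher--Menshov (Lemma~\ref{vnrmlemma}) plus the non-maximal Bessel (Lemma~\ref{nonmaxbessellemma}) for the main terms and the multi-frequency $V^r$ estimate only on the local pieces.
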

Assuming Proposition~\ref{p.vnbi-sparse} we will prove Proposition~\ref{p.delta-loss-weaktype}. 

Let $\T$ be $j$-strongly disjoint and $\P=\bigcup_{T\in \T} T$.  

By a simple pigeonholing argument, given any $A > 1$ we may partition $\P$ into subsets $\P_1,\dots, \P_L$ where $L=O(A^2)$, with the following property: for each $1\le k \le L$  there exists $d\in \{1,2,3\}$ such that $\{ I_P, P\in \P_k\}$ is $(A,d)$-sparse. 

This partition also lead to a partition of each $T \in \T$, therefore $\P_k$ is also the union of a collection $\T_k$ of $j$-strongly disjoint trees with $\|N_{\T_k}\|_\infty \le \|N_{\T}\|_\infty \le L$. Each tree in $\T_k$ could be further decomposed into subtrees such that: each of the new subtrees contains its own top, and the top intervals of the subtrees are disjoint. We obtain $\T'_k$ a collection of trees with top, which is still $j$-strongly disjoint, furthermore $\|N_{\T'_k}\|_\infty \le \|N_{\T_k}\|_\infty \le L$. 

We are now in a position to apply Proposition~\ref{p.vnbi-sparse} for $\T'_k$, producing  $\P_k^\ast \subset \P_k$.
Letting $\P^\ast = \bigcup_{1\le k \le L} \P_k^\ast$ and using $L =O(A^2)$ it follows that
\begin{eqnarray*}
|\bigcup_{P\in \P^\ast} I_P| &\lesssim_\eta& (A^{2-\eta} + A^2 L^{-\eta})\sum_{T\in \T} |I_T| \\
\sum_{P\in \P \setminus \P^\ast} |\<f,\phi_{P,j}\>|^2 
&\lesssim_{\eta,\epsilon}& (A^{1+\epsilon} L^{\epsilon} + L^8 A^{4-\eta})^2 \|f\|_2^2
\end{eqnarray*}
The desired estimates for $\P^\ast$ follows by letting $\epsilon = \delta/2$, $A=L^{\epsilon/(1+\epsilon)}$, and $\eta$ large.
 
\subsection{Proof of Theorem~\ref{t.vnbi}, reduction 4}
Let $\I=\{I_T: T\in \T\}$. Let $D_\eta>0$ to be chosen later (depending only on $\eta$). For each $I\in \I$ consider the $D_\eta (A^{-\eta}+L^{-\eta})|I|$ neighborhood of its endpoints, i.e. the set of $x$ such that $\dist(x,\partial I) \le D_\eta (A^{-\eta}+L^{-\eta})|I|$. Let $E_1$ be the union of these neighborhoods over $I\in \I$, and let $E_2=\{\sum_{I\in \I} (\mathcal M1_I)^2 > L^2\}$.
We then let $\P^\ast$ be the set of all $P\in \P$ such that $I_P\subset E_1\cup E_2$. Using the Fefferman--Stein maximal inequality, it follows that
\begin{eqnarray*}
|\bigcup_{P\in \P^\ast} I_P|   &\lesssim_\eta& (A^{-\eta} + L^{-\eta}) \sum_{I\in \I} |I| + L^{-(4+2\eta)}\|\sum_{I\in \I} (\mathcal M1_I)^2\|_{2+\eta}^{2+\eta} \\
&\lesssim_\eta& (A^{-\eta} + L^{-\eta})\|N_{\T}\|_1 \ \ .
\end{eqnarray*}
Let $\T_2=\{T\in \T: I_T \not\subset E_1\cup E_2\}$ and let $\I_2$ denote the set of top intervals of $\T_2$.  We now show that  $\|\sum_{T\in \T_2} (\mathcal M[1_{I_T}])^2\|_\infty \lesssim_\eta L^{4}$, and this will allow us to reduce Proposition~\ref{p.vnbi-sparse} to Proposition~\ref{p.lastred} below.
Since for each $I \in \I_2$ there are at most $L$ elements of $\T$ with $I_T=I$, it suffices to show that
\begin{eqnarray*}
\sum_{I\in \I_2} (\mathcal M[1_{I}])(x)^2 &\lesssim_\eta& L^{3}
\end{eqnarray*}
uniform over $x\in \R$, which we fix below.
By further dividing $\I_2$ it suffices to prove that $\sum_{I\in \I_3} (\mathcal M[1_{I}])(x)^2 \lesssim_\eta L^2$
for every $\I_3\subset \I_2$ with the following property: if $I, I'\in \I_3$ and $|I'|<|I|$ then $|I'|\le  2^{-L}|I|$. By further dividing $\I_3$ we may assume that one of the following situations occur: (i) $x\in I$ for all $I\in \I_3$; (ii) $x$ is on the left of $I$ for all $I\in \I_3$;
(iii) $x$ is on the right of $I$ for all $I\in \I_3$.

Now,  the desired estimate is clear for (i), so by symmetry we only consider situation (ii). By monotonicity we may assume further that $x$ is the left endpoint of some $J\in \I_3$. By definition of $E_1$ it follows that for every $I\in \I_3-\{J\}$ we have $\dist(x, I) \gtrsim L^{-\eta}\max(|J|,|I|)$. Using the $(A,d)$ sparseness of $\I_3$, it follows that
\begin{eqnarray*}
\sum_{I\in \I_3}  (\mathcal M[1_{I}])(x)^2 &\lesssim& \sum_{|I|=|J|} (\mathcal M 1_{I})(x)^2 \quad + \sum_{|I| \le 2^{-L} |J|} (\mathcal M 1_{I})(x)^2 + \sum_{|I| \ge 2^L |J|} (\mathcal M 1_{I})(x)^2\\
&\lesssim_\eta& 1 \quad +\quad L^\eta 2^{-L} \quad + \quad \inf_{x\in J} \sum_{I\in \I_3: |I| \ge 2^L |J|} (\mathcal M 1_{I})(x)^2\\
&\lesssim&   L^2 \qquad \text{(using the definition of $E_2$).}
\end{eqnarray*}

\begin{proposition}  \label{p.lastred}
Let $A, L, \eta > 1$ and $\epsilon>0$. Let $\T$ be strongly $j$-disjoint with
\begin{eqnarray}  \label{e.masq}
\|\sum_{T \in \T}(\ma[1_{I_T}])^2\|_{L^{\infty}} &\le& L\ \ .
\end{eqnarray}
Let $\P = \bigcup_{T \in \T}T$. Assume that  $\{I_P : P \in \P\} \cup \{I_T : T \in \T\}$
is $(A,d)$ sparse, and
\begin{eqnarray} \label{e.intcond}
\sup_{x \in I_P} \dis(x,\partial I_T) \geq D_{\eta} A^{-\eta}|I_T|
\end{eqnarray}
for all $P \in \P$, $T \in \T$.   Then for $D_{\eta}$ sufficiently large depending on $\eta$ it holds that
\begin{eqnarray*} 
\sum_{P \in \P}|\<f,\phi_{P,j}\>|^2 &\lesssim_{\eta,\epsilon}& ((AL)^{\epsilon} + L^2A^{3-\eta})^2 \|f\|_{L^2}^2.
\end{eqnarray*}
\end{proposition}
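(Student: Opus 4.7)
The plan is to follow the strategy behind the maximal Bessel inequality in \cite{lacey2000, dtt2008}, substituting the variation-norm multi-frequency projection bound Theorem~\ref{t.bourgain-varnorm} for its maximal predecessor. The first step is to dualize: by Cauchy--Schwarz and duality,
\begin{eqnarray*}
\Big(\sum_{P\in \P} |\<f,\phi_{P,j}\>|^2\Big)^{1/2} &\le& \|f\|_{L^2} \cdot \sup_{\|c\|_{\ell^2(\P)}=1}\Big\|\sum_{P\in \P} c_P \phi_{P,j}\Big\|_{L^2} \ \ ,
\end{eqnarray*}
so it will suffice to prove $\|\sum_{P\in \P} c_P \phi_{P,j}\|_{L^2}\lesssim_{\eta,\epsilon}((AL)^\epsilon + L^2 A^{3-\eta})\|c\|_{\ell^2}$. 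Squaring the $L^2$ norm and splitting the ensuing double sum over $P,P'\in \P$ into ``same tree'' and ``distinct trees'' contributions should naturally isolate the two error terms in the conclusion.

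For the cross-tree piece, the strong $j$-disjointness of $\T$ forces $\omega_{P_j}\cap\omega_{P'_j}=\emptyset$ whenever $P\in T$, $P'\in T'$ and $T\ne T'$, so $\<\psi_{P,j},\psi_{P',j}\>=0$. The only way for $\<\phi_{P,j},\phi_{P',j}\>$ to be nonzero is through the $a_P$ factor present when $j=3$, which can slightly smear the frequency support. A Schur-test argument built on the wave packet decay \eqref{e.wavepacketdecay}, the $(A,d)$-sparseness of $\{I_P : P\in \P\}\cup \{I_T:T\in \T\}$, and the interior separation hypothesis \eqref{e.intcond} between tiles and tree tops will give the bound $L^2 A^{3-\eta}$ for this contribution, provided that the wave packet order $M$ and the constant $D_\eta$ in \eqref{e.intcond} are chosen sufficiently large depending on $\eta$.

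For the same-tree piece, I plan to write $\sum_{P\in T} c_P \phi_{P,j}$ as a modulated convolution via Lemma~\ref{truncatebyconvlemma}, with modulation frequency $\xi_T:=c(\omega_{(P_T)_j})$. Summing over trees reduces the $L^2$ estimate to a variation-norm control of sharp Fourier projections onto the multi-frequency set $\{\xi_T\}_{T\in \T}$. Since, by the maximal hypothesis \eqref{e.masq}, at each point $x$ there are at most $O(L)$ ``active'' trees containing $x$, one can invoke Theorem~\ref{t.bourgain-varnorm} with $N\lesssim L$ in a local sense, combined with a Rademacher--Menshov style rearrangement (Lemma~\ref{vnrmlemma}) to glue the local estimates together. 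This is what will produce the $(AL)^\epsilon$ factor in the bound.

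The main obstacle is matching the discrete combinatorial structure of the tiles to the continuous form of Theorem~\ref{t.bourgain-varnorm}: the wave packets $\phi_{P,j}$ for $P$ ranging over a tree do not exactly equal sharp Fourier projections of $f$, and the convolution representation from Lemma~\ref{truncatebyconvlemma} is only a close proxy that faithfully captures the truncation at scale $|I_T|$. Controlling the resulting error terms -- using the wave packet decay and the sparseness parameter $A$ to guarantee that the convolution kernels involved act on well-separated scales -- is what forces the polynomial factor $A^{3-\eta}$ in the second term of the bound. Once that reduction is in place, Theorem~\ref{t.bourgain-varnorm} supplies the $\epsilon$-loss in $L$ that is characteristic of Bourgain-type multi-frequency estimates, and completes the proof.
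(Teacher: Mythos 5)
Your dualization step is correct, and you are right that Theorem~\ref{t.bourgain-varnorm} is the tool that produces the $(AL)^\epsilon$ factor. However, the core move — squaring the $L^2$ norm of $\sum_P c_P\phi_{P,j}$ and splitting into same-tree and cross-tree pairs via a Schur/$TT^*$ argument — does not go through, because of the variational weights $a_P(x)$ hidden in $\phi_{P,3}$. You describe these as ``slightly smearing the frequency support,'' but that understates the problem: $a_P(x)$ is an arbitrary measurable function (subject only to $\sum_n|a_n(x)|^{r'}\le 1$), so $\phi_{P,3}=a_P\psi_{P,3}$ has no controllable Fourier localization at all, and the inner products $\<\phi_{P,3},\phi_{P',3}\>=\int a_P\overline{a_{P'}}\,\psi_{P,3}\overline{\psi_{P',3}}$ cannot be estimated by frequency disjointness or by Schur-type off-diagonal decay. (A secondary issue: strong $j$-disjointness of $\T$ gives disjointness of the \emph{tiles} $P_j$ and $P'_j$, not of the \emph{frequency intervals} $\omega_{P_j}$ and $\omega_{P'_j}$.) The weights also mean you cannot blindly apply Lemma~\ref{truncatebyconvlemma} to $\sum_{P\in T}c_P\phi_{P,j}$, since the convolution identity there relies on Fourier support of the summand being inside a fixed annulus around $\xi_T$, which $a_P\psi_{P,3}$ does not satisfy.

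The step that your proposal is missing, and which the paper does first, is to \emph{exploit the algebraic structure of the weights}: $a_P(x)=a_{m(P,x)}(x)$ is constant in $P$ for $|I_P|$ ranging over a dyadic block $[2^{k_{m-1}(x)},2^{k_m(x)})$, so for fixed $x$ one has
\begin{eqnarray*}
\Big|\sum_{P\in\P} b_P\phi_{P,j}(x)\Big|=\Big|\sum_m a_m(x)\sum_{2^{k_{m-1}(x)}\le|I_P|<2^{k_m(x)}} b_P\psi_{P,j}(x)\Big|\le \Big\|\sum_{P:\,|I_P|\ge 2^k}b_P\psi_{P,j}(x)\Big\|_{V^r_k},
\end{eqnarray*}
by Hölder duality of $\ell^{r'}$ against $\ell^r$ together with the telescoping structure. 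This reduces the Bessel estimate to a variation-norm (in the scale variable $k$) bound on partial sums of the \emph{unweighted} wave packets $\psi_{P,j}$, which is where Lemma~\ref{truncatebyconvlemma}, Rademacher--Menshov, the Bessel inequality Lemma~\ref{nonmaxbessellemma}, and Theorem~\ref{t.bourgain-varnorm} can all be brought to bear. The paper then proves that $V^r$ estimate via a nested layer decomposition $\calJ_1,\dots,\calJ_{16A^2L}$ of the tree tops (giving a partition of $\P$ into pieces $\P_J$ and $\P_{<J}$), which is a spatial/scale stratification rather than a tree-pair split, and is what makes Lemma~\ref{vnrmlemma} applicable. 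Without the initial reduction to a $V^r$ bound on $\psi$-sums, no amount of Schur testing will control the $a_P$-modulated cross terms, so as stated the proposal has a genuine gap.
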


\subsection{Proof of Proposition~\ref{p.lastred}}

For convenience of notation, assume without loss of generality that $A^2 L$ is an integer. 
By duality, it suffices to show 
\begin{eqnarray}\label{e.dual-red4}
\|\sum_{P \in \P} b_P \phi_{P,j}\|_{L^2} &\lesssim_{\eta,\epsilon}& (AL)^{\epsilon} + L^2A^{3-\eta} 
\end{eqnarray}
for every  sequence $\{b_P\}_{P \in \P}$ such that $\|b\|_{\ell^2(\P)}=1$, which we will fix below.

Let $\calJ = \{I_T : T \in \T\}$ and let $\calJ_A = \{(I_{T})_A : T \in \T\}$ where $(I_T)_A$ is an interval in $\calG_d$ (guaranteed by $(A,d)$ sparsity) such that $AI_T \subset (I_T)_A \subset 3AI_T$.

From the $(A,d)$ sparsity, it is clear that the map from $I_T \rightarrow (I_T)_A$ is bijective from $\calJ$ to $\calJ_A$ and that if $I_T \subsetneq I_{T'}$ then $(I_T)_A \subsetneq (I_{T'})_A$. 

We now decompose $\calJ_A$ into ``layers''. Let $\calJ_{A,1}$ be the set of maximal intervals in $\calJ_A$ and for $m \geq 1$ let $\calJ_{A,m+1}$ be the set of maximal intervals in $\calJ_{A} \setminus \bigcup_{n=1}^m \calJ_{A,n}$. 
 
Now, since  $(I_T)_A \subset 3AI_T$ for each $T$, using \eqref{e.masq} we have
\begin{eqnarray*}
\|\sum_{J \in \calJ_A} 1_J\|_{L^{\infty}} &\le&  16A^2 \|\sum_{T} (\mathcal M[1_{I_T}])^2\|_\infty  \quad \le \quad 16A^2 L \ \ .
\end{eqnarray*}
Thus, $\calJ_{A,1}, \ldots, \calJ_{A,16A^2L}$ partition $\calJ_A$. Letting $\calJ_{m} = \{J \in \calJ : (J)_A \in \calJ_{A,m}\}$ it follows that $\calJ_1, \ldots \calJ_{16A^2L}$ partition $\calJ$.  Thanks to $(A,d)$ sparsity of $\calJ$ again, this partition is consistent with the usual set inclusion ordering in $\calJ$, in the sense that if $J\in \calJ_m$, $J'\in \calJ_n$, and $J\subsetneq J'$ then $m > n$.

For $J \in \calJ$ let $m$ be such that $J\in \calJ_m$, and define
\begin{eqnarray*}
\P_J &:=& \{P \in \P : I_P = J\}
\end{eqnarray*}
\begin{eqnarray*}
\P_{<J} &:=& \{P \in \P : I_P \subsetneq J \text{\ but \ } I_P \not\subset J' \text{ \ for all \ } J' \in \bigcup_{m'>m}\calJ_{m'}\}.
\end{eqnarray*}
We obtain the following partition of $\P$:
\begin{eqnarray}\label{e.Ppartition}
\P &=& \bigcup_{1 \leq m \leq 16A^2L} \bigcup_{J \in \calJ_m} \P_J \cup \P_{<J} \ \ .
\end{eqnarray}
By definition of $\phi_{P,j}$, it is clear that \eqref{e.dual-red4} will follow from the following   estimates
\begin{eqnarray} 
\label{e.red41}
\|\sum_{J \in \calJ} \sum_{P \in \P_J: \  |I_P| \geq 2^k} b_P \psi_{P,j}(x)\|_{L^2_x(V^r_k)}  &\lesssim_\eta&  1 + \log^2(AL) + A^{-\eta}L   \\
\label{e.red42}
\|\sum_{J \in \calJ} \sum_{P \in \P_{<J}: \  |I_P| \geq 2^k}  b_P \psi_{P,j}(x)\|_{L^2_x(V^r_k)}  &\lesssim_{\eta,\epsilon}&  (AL)^{\epsilon} + L^2A^{3 - \eta}   \ \ .
\end{eqnarray}

\subsection{Proof of \eqref{e.red41}} \label{polrfsection}
Recall that $\|b\|_{\ell^2(\P)}=1$.  Recall that $\psi_{P,j}$ is a wave function   of order $M$, which is assumed sufficiently large compared to $\eta$.
We first estimate the error term 
\begin{eqnarray*}
E(x) &:=& \sum_{J\in \calJ: \   x \not\in (J)_A}  \sum_{P \in \P_J} |b_P \psi_{P,j}(x)| \ .
\end{eqnarray*}

\begin{lemma} \label{l.E(x)-source} It holds that
\begin{eqnarray*}
\|\sum_{J \in \calJ} \sum_{P \in \P_J: \  |I_P| \geq 2^k} b_P \psi_{P,j}(x)\|_{V^r_k} &\le& \|\sum_{m=1}^{n} \sum_{J \in \calJ_m} \sum_{P \in \P_J} b_P \psi_{P,j}(x)\|_{V^r_n} + 2 E(x) \ .
\end{eqnarray*}
\end{lemma}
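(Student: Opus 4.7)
Fix $x$, abbreviate $c_J := \sum_{P\in \P_J} b_P \psi_{P,j}(x)$, and split
\begin{eqnarray*}
a(k,x) \;:=\; \sum_{J \in \calJ:\, |J|\ge 2^k} c_J, \qquad B_n(x) \;:=\; \sum_{m\le n} \sum_{J\in \calJ_m} c_J
\end{eqnarray*}
into their good and bad parts, $a = a^g + a^b$ and $B_n = B_n^g + B_n^b$, according to whether $x\in (J)_A$ or not. The structural input I would use is that $\calJ_A \subset \calG_d$ is a family of grid intervals, so the subcollection $\{(J)_A : x\in (J)_A\}$ is totally ordered by inclusion, giving a (possibly finite) chain $(J_1)_A \supsetneq (J_2)_A \supsetneq \cdots$, hence $|J_1|>|J_2|>\cdots$ by $(A,d)$-sparsity. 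Furthermore, from the definition of the layers $\calJ_{A,m}$ as successive maximal selections, larger intervals lie in smaller layers, so the layer indices $m(J_1)<m(J_2)<\cdots$ are in the same order as $|J_i|$ is decreasing.

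The first step is to observe that $a^g(k,x)$ and $B_n^g(x)$ run through exactly the same sequence of partial sums $0,\,c_{J_1},\,c_{J_1}+c_{J_2},\dots$, only reindexed — one by $k$ (decreasing), the other by $n$ (increasing) — because at each $k$ the condition $|J_i|\ge 2^k$ cuts off the chain at the same place as the condition $m(J_i)\le n$ for the matching $n$. Since the $V^r$-norm depends only on the image sequence of the staircase (and both staircases tend to $0$ at one end, so there is no issue with the leading $|a(N_0)|^r$ term in the definition of $V^r$), this gives
\begin{eqnarray*}
\|a^g(\cdot,x)\|_{V^r_k} \;=\; \|B_n^g(x)\|_{V^r_n}.
\end{eqnarray*}

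Next I would control each bad part. The function $k\mapsto a^b(k,x)$ is a step function whose jump at scale $2^k$ equals $\sum_{J\notin\calJ(x),\,|J|=2^k} c_J$, so its total variation is bounded by $\sum_{J\notin\calJ(x)} |c_J| \le E(x)$; since $a^b(k,x)\to 0$ as $k\to\infty$ and $V^r\le V^1$ for $r\ge 1$ (with no extra contribution from the leading term once we take $N_0$ large), we get $\|a^b\|_{V^r_k}\le E(x)$. The same argument, grouping bad contributions by layer instead of by scale, yields $\|B_n^b(x)\|_{V^r_n}\le E(x)$.

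Combining everything via two triangle inequalities,
\begin{eqnarray*}
\|a(\cdot,x)\|_{V^r_k} \;\le\; \|a^g\|_{V^r_k}+\|a^b\|_{V^r_k} \;\le\; \|B_n^g\|_{V^r_n}+E(x) \;\le\; \|B_n\|_{V^r_n}+\|B_n^b\|_{V^r_n}+E(x) \;\le\; \|B_n\|_{V^r_n}+2E(x),
\end{eqnarray*}
which is exactly the claimed bound. The only delicate point — and the main thing to verify cleanly — is the ``same sequence of partial sums'' matching in step one, which rests on the combination of grid-nesting of $(J)_A$, $(A,d)$-sparsity, and the monotonicity of the layer assignment in $|J|$.
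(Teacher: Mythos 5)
Your proof follows the same line of reasoning as the paper's: split each staircase into a "good" part (over $J$ with $x\in(J)_A$) and a "bad" part, bound the bad parts by $E(x)$ via the $\ell^1$ estimate, and identify the good parts under the reindexing from scales $k$ to layers $n$. The paper phrases this as "strip off the bad $J$'s at cost $E(x)$, reindex, add back the other $J$'s at cost $E(x)$," but it is the same decomposition, and the reindexing rests on exactly the structural facts you cite (nestedness of the $(J)_A$ containing $x$, monotonicity of the layer assignment, $(A,d)$-sparsity).

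One point to tighten: your claimed equality $\|a^g(\cdot,x)\|_{V^r_k}=\|B_n^g(x)\|_{V^r_n}$ is not quite right, because the two staircases traverse the same image in \emph{opposite} directions ($a^g(k)$ decreases from the full good sum to $0$ as $k$ increases, while $B^g_n$ increases from $0$ to the full good sum as $n$ increases), and the paper's $V^r$ norm is not reversal-invariant on account of the leading $|a(N_0)|^r$ term. For $a^g$, one may take $N_0\to-\infty$ and pick up the full good sum in that term, whereas for $B^g$ the $N_0$-end is where the staircase is near $0$ — so in general $\|a^g\|_{V^r_k}$ exceeds $\|B^g_n\|_{V^r_n}$ (consider a single good $J$ with $c_{J}=1$: the $k$-norm is $2^{1/r}$ while the $n$-norm is $1$). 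Your parenthetical "both staircases tend to $0$ at one end" is true, but only one of them tends to $0$ at the $N_0$-end. Relatedly, the bad-part step functions have $V^r_k$ norms bounded by $2E(x)$ rather than $E(x)$ for the same reason. These are constant-level imprecisions, and the paper's own write-up shares them (the reindexing step there is written as $\le$ but is really only $\lesssim$); they are harmless because the lemma feeds into an estimate where constants and even polylog losses in $AL$ are absorbed, and because downstream (in the duality argument producing $V^r_k$ from the variational coefficients) what is actually needed is the semi-norm $\widetilde V^r$, for which the reversal symmetry does hold exactly. Still, if you want a cleanly correct statement you should either work with $\widetilde V^r$ or replace $2E(x)$ by $C E(x)$ and the reindexing equality by a two-sided comparison up to $2^{1/r}$.
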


\begin{proof}
Using the triangle inequality and the definition of $E(x)$, the left hand side of the desired estimate is bounded above by
\begin{eqnarray*}
&\le& \|\sum_{J \in \calJ: \ x \in (J)_A} \ \  \sum_{P \in \P_J: \  |I_P| \geq 2^k} b_P \psi_{P,j}(x)\|_{V^r_k} + E(x) \\
&=& \|\sum_{J \in \calJ: \ x \in (J)_A, \ |J| \ge 2^k} \ \  \sum_{P \in \P_J} b_P \psi_{P,j}(x)\|_{V^r_k} + E(x) \ \ .
\end{eqnarray*}
Now, the intervals  $(J)_A$ that contain $x$ are nested, with larger interval belongs to some $\calJ_{A,m}$ with smaller $m$, thus we could bound the last display by
\begin{eqnarray*}
&\le& \|\sum_{m=1}^n \sum_{J\in \calJ_m: x\in (J)_A} \sum_{P \in \P_J} b_P \psi_{P,j}(x)\|_{V^r_n} \quad + \quad E(x) \\
&\le& \|\sum_{m=1}^{n} \sum_{J \in \calJ_m} \sum_{P \in \P_J} b_P \psi_{P,j}(x)\|_{V^r_n}
\quad +\quad 2E(x)
\end{eqnarray*}
finishing the proof.
\end{proof}

\begin{lemma} \label{l.E(x)} It holds that
\begin{eqnarray*} 
\|E(x)\|_{L^2} &\lesssim_\eta& A^{-\eta} L \ .
\end{eqnarray*}
\end{lemma}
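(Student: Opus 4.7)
The plan is to use the wave-packet decay of Definition~\ref{d.wavepacket} at order $M$ sufficiently large (depending on $\eta$) to extract a factor $A^{-\mathrm{large}}$, and then reduce the remaining $L^2$ bound to a Bessel-type inequality that is controlled by hypothesis~\eqref{e.masq}.

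First I would extract the $A$-decay. For $P \in \P_J$ (so $I_P = J$) the wave-packet bound gives $|\psi_{P,j}(x)| \le C_M |J|^{-1/2}\widetilde\chi_J(x)^M$. Since $x \notin (J)_A$ implies $|x-c(J)| \gtrsim A|J|$ and hence $\widetilde\chi_J(x) \lesssim A^{-4}$, splitting $M = M_1 + M_2$ with $M_1 := \lceil \eta/4\rceil$ and $M_2 := 1$ yields
$$|\psi_{P,j}(x)|\cdot 1_{x \notin (J)_A} \lesssim_M A^{-4M_1}|J|^{-1/2}\widetilde\chi_J(x)^{M_2}.$$
Next I would bound the cardinality $|\P_J|$: strong $j$-disjointness places each tri-tile in at most one tree, and within a single tree the overlap condition paired with $I_P = J$ determines a unique $P \in \P_\nu$; hence $|\P_J| \le \#\{T \in \T: J \subset I_T\} \le \|N_\T\|_{L^\infty} \le L$. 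Setting $c_J := (\sum_{P \in \P_J}|b_P|^2)^{1/2}$, so that $\sum_J c_J^2 \le \|b\|_{\ell^2(\P)}^2 = 1$, Cauchy-Schwarz gives $\sum_{P \in \P_J}|b_P| \le L^{1/2}c_J$, and combining the two displays,
$$E(x) \lesssim_M A^{-4M_1} L^{1/2}\sum_{J \in \calJ} c_J f_J(x), \qquad f_J := |J|^{-1/2}\widetilde\chi_J^{M_2}.$$

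It remains to establish the Bessel-type estimate $\|\sum_J c_J f_J\|_{L^2} \lesssim L^{1/2}\|c\|_{\ell^2(\calJ)}$, which by duality is equivalent to showing $\sum_J |\<f_J, g\>|^2 \lesssim L\|g\|_{L^2}^2$ for all $g \in L^2$. Cauchy-Schwarz applied to the factorisation $\widetilde\chi_J^{M_2} = \widetilde\chi_J^{M_2/2}\cdot \widetilde\chi_J^{M_2/2}$ together with $\int \widetilde\chi_J^{M_2} \lesssim |J|$ gives $|\<f_J, g\>|^2 \lesssim_{M_2} \int \widetilde\chi_J^{M_2}\, g^2$. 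The pointwise comparison $\widetilde\chi_J(x)^{M_2} \lesssim \ma[1_J](x)^{4M_2} \le \ma[1_J](x)^2$ (valid since $M_2 \ge 1/2$ and $\ma[1_J] \le 1$) combined with hypothesis~\eqref{e.masq} yields
$$\sum_J |\<f_J, g\>|^2 \lesssim \int g^2 \sum_{T \in \T}\ma[1_{I_T}]^2\, dx \le L\|g\|_{L^2}^2.$$
Assembling the estimates, $\|E\|_{L^2} \lesssim_M A^{-4M_1}L \lesssim_\eta A^{-\eta} L$, as required.

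The principal obstacle I expect is that a direct Schur test on the Gram matrix $(\<f_{J_1}, f_{J_2}\>)$ does not produce a scale-independent bound: many intervals nested inside a larger one at the sparse-scale ratio $2^{100A}$ can produce off-diagonal entries whose combinatorial multiplicity dominates the geometric decay $(|J_<|/|J_>|)^{1/2}$ across sparse scales. Going through $L^2$-duality and invoking~\eqref{e.masq} directly, rather than through an almost-orthogonality comparison of the $f_J$'s, is the key move that renders the argument scale-independent.
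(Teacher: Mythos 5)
Your proof is correct and uses the same key ingredients as the paper's argument: the wave-packet decay at a large order $M$ to extract the $A^{-\eta}$ factor after noting $x\notin(J)_A$ forces $\widetilde\chi_J(x)\lesssim A^{-4}$, the cardinality bound $|\P_J|\lesssim L$, and hypothesis~\eqref{e.masq} to control $\sum_J\ma[1_J]^2$. The only cosmetic difference is the arrangement of Cauchy--Schwarz: you Cauchy--Schwarz within $\P_J$ to form $c_J$ and then dualise the $J$-sum in $L^2$, whereas the paper applies a single Cauchy--Schwarz to $E(x)^2$ (splitting off one factor of $|I_P|^{\pm1/4}|\psi_{P,j}|^{1/2}$) and then pairs the two resulting factors in $L^1\times L^\infty$; both routes invoke exactly the same geometric input. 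One minor imprecision: the claim that fixing $I_P=J$ determines a unique $P\in T$ is slightly too strong, since the grid structure and the condition $3\omega_{(P_T)_i}\subsetneq 3\omega_{P_i}$ leave $O(1)$ candidates for $\omega_{P_i}$; the correct count is $|\P_J|\lesssim L$, which is all your argument needs.
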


\begin{proof}

We note that any $T \in \T$  contributes at most $O(1)$ tri-tiles to each $\P_J$ and such a contribution would necessitate $J \subset I_T$. Thus, $|\P_J| \lesssim \|N_{\T}\|_{L^\infty} \le L$, so
\begin{eqnarray*}
\|\sum_{J \in \calJ} \sum_{P \in \P_J}  |I_P|^{1/2} |\psi_{P,j}|
\|_{L^{\infty}} &\lesssim& L \|\sum_{J \in \calJ}  (\ma[1_J])^2 \|_{L^{\infty}} \quad \lesssim \quad L^2 \ \ .
\end{eqnarray*}

We also have
\begin{eqnarray*}
\|1_{\rea \setminus AI_P}\psi_{P,j}\|_{L^1} &\lesssim_M& A^{1-M} |I_P|^{1/2}
\end{eqnarray*}
which gives
\begin{eqnarray*}
\|\sum_{J \in \calJ: \ x\not\in (J)_A}  \sum_{P \in \P_J} |I_P|^{-1/2} |b_P^2 \psi_{P,j}(x)|
\|_{L^{1}} &\lesssim_M&  A^{1-M } \|b\|_{\ell^{2}(\P)}^2 \quad = \quad A^{1-M} \ \ .
\end{eqnarray*}

Choosing $M$ sufficiently large, depending on $\eta$, the desired bound for $\|E\|_2$ follows by an application of Cauchy-Schwarz.
\end{proof}

Applying Lemma \ref{nonmaxbessellemma}, we see that for any sequence $\{\epsilon_m\}_{m=1}^{16A^2L} \subset \{1,-1\}$ we have
\begin{eqnarray*}
\|\sum_{m=1}^{16A^2L} \sum_{J \in \calJ_m} \sum_{P \in \P_J}  \epsilon_m b_P \psi_{P,j}\|_{L^2} &\lesssim& (1 + \log(L))  
\end{eqnarray*}
therefore, by Lemma \ref{vnrmlemma}
we have
\begin{eqnarray*}
\|\sum_{m=1}^{n} \sum_{J \in \calJ_m} \sum_{P \in \P_J} b_P \psi_{P,j}(x)\|_{L^2_x(V^r_n)} &\lesssim& [1 + \log(16A^2L)] \cdot [1 + \log(L)]
\end{eqnarray*}
which, combined with Lemma \ref{l.E(x)} and Lemma~\ref{l.E(x)-source}, gives \eqref{e.red41}.

\subsection{Proof of \eqref{e.red42}} \label{polrssection}

Here, we use two error terms 
\begin{eqnarray*}
E_1(x) &=& \sum_{m=1}^{16A^2L} \sum_{\substack{J \in \calJ_m \\ x \not\in J}} \sum_{P \in \P_{<J}} |b_P \psi_{P,j}(x)| \\
E_2(x) &=& \sum_{m = 2}^{16A^2L} \ \  \sum_{\substack{J \in \calJ_m \\ x \in J}} \ \  \sum_{m' < m} \ \  \sum_{\substack{P \in \P_{<J^{m'}} \\ |I_P| < |J|}} |b_P \psi_{P,j}(x)|
\end{eqnarray*}
where, if $J \in \calJ_m$ and $m'<m$ then we let $J^{m'}$ denote the unique element of $\calJ_{m'}$ such that $(J)_A \subset (J^{m'})_A$.

\begin{lemma} \label{l.E1(x)E2(x)} It holds that
\begin{eqnarray*}
\|\sum_{J \in \calJ} \sum_{\substack{P \in \P_{<J} \\ |I_P| \geq 2^k}} b_P \psi_{P,j}(x)\|_{V^r_k} &\lesssim&   E_1(x) + E_2(x) +  \|\sum_{m =1}^{n} \sum_{J \in \calJ_m} \sum_{P \in \P_{<J}} b_P \psi_{P,j}(x)\|_{V^r_n} \\  
&+& \big(\sum_{m, J \in \calJ_m}\| \sum_{P \in \P_{<J}: \ |I_P| \geq 2^k} b_P \psi_{P,j}(x)\|_{V^r_k}^2 \big)^{1/2} \ \ .
\end{eqnarray*}
\end{lemma}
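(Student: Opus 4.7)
The plan is to split the outer sum $\sum_{J\in\calJ}S_{J,k}(x)$, where $S_{J,k}(x):=\sum_{P\in\P_{<J},\,|I_P|\ge 2^k}b_P\psi_{P,j}(x)$, according to whether $x\in J$, and then within the $x\in J$ part to use the layered structure together with an $\ell^2$-partition of the $V^r_k$-norm at the layer-transition scales. For each $x$, the intervals in $\{J\in\calJ:x\in J\}$ form a totally nested chain (being dyadic), and within each layer $\calJ_m$ the intervals are pairwise disjoint since the $A$-enlargements $(J)_A$ are by the layering construction and $J\subset(J)_A$. Hence at most one $J^m(x)\in\calJ_m$ contains $x$, and these form a chain $J^1(x)\supsetneq J^2(x)\supsetneq\cdots$.

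First I would handle the terms with $x\notin J$: since $S_{J,k}(x)$ is piecewise constant in $k$ with $k$-jumps of absolute size $|b_P\psi_{P,j}(x)|$, one has $\|S_{J,k}(x)\|_{V^r_k}\le\sum_{P\in\P_{<J}}|b_P\psi_{P,j}(x)|$, and the triangle inequality over $J\not\ni x$ produces a bound of $E_1(x)$. Next, for the $x\in J$ part $F^{in}_k:=\sum_m\hat S_{m,k}$ with $\hat S_{m,k}:=S_{J^m(x),k}(x)$, I would decompose $F^{in}_k=\Sigma_k+R_k$, where $\Sigma_k:=\sum_{m:\sigma_m>k}\Psi_m$ with $\sigma_m:=\log_2|J^m(x)|$ and $\Psi_m:=\sum_{P\in\P_{<J^m(x)}}b_P\psi_{P,j}(x)$, and $R_k:=-\sum_{m:\sigma_m>k}\sum_{P\in\P_{<J^m(x)},\,|I_P|<2^k}b_P\psi_{P,j}(x)$ collects the unrealized tails. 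The count $\tilde n(k,x):=|\{m:\sigma_m>k\}|$ is monotone in $k$, so $\|\Sigma_k\|_{V^r_k}=\|\sum_{m\le n}\Psi_m\|_{V^r_n}$, and passing from $\sum_{m\le n}\Psi_m$ to the third-term quantity $G_n$ costs only another $E_1$ (by the same argument applied to the $J\ne J^m(x)$ contributions).

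For $R_k$ I would partition the $V^r_k$-norm at the cut points $\{\sigma_m\}$, obtaining $\|R\|_{V^r_k}^r\lesssim\sum_m\|R\|_{\widetilde V^r_{[\sigma_{m+1},\sigma_m]}}^r$ by refining any partition to include each $\sigma_m$, and then use the inclusion $\ell^r\subset\ell^2$ for $r\ge 2$ to convert this into the $\ell^2$-form required. Within each $[\sigma_{m+1},\sigma_m]$ only tails from layers $m'\le m$ are active; splitting into $m'=m$ and $m'<m$, the $m'=m$ part equals $\Psi_m-\hat S_{m,k}$ and has sub-variation equal to that of $\hat S_{m,k}$ (absorbed into the fourth term's $\ell^2$-sum via the $J^m(x)$ summand), while the $m'<m$ part has total variation dominated pointwise by $\sum_{m'<m}\sum_{P\in\P_{<J^{m'}(x)},\,|I_P|<|J^m(x)|}|b_P\psi_{P,j}(x)|$, precisely the $m$-th summand of $E_2(x)$, and these sum to $E_2(x)$ in $\ell^1\supset\ell^2$.

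The hard part will be the careful boundary-jump bookkeeping at the cut points $\sigma_m$: both $\Sigma_k$ and $R_k$ carry a jump of magnitude approximately $|\Psi_m|$ at $\sigma_m$ that cancel in $F^{in}_k=\Sigma_k+R_k$ (producing only the small actual jump $\hat S_{m,\sigma_m-1}$), so one must absorb the large $R$-jump into the appropriate sub-interval variation on one side of $\sigma_m$ without double-counting, and then use $|\Psi_m|\le\|\hat S_{m,k}\|_{V^r_k}$ (which holds since the full $V^r$-norm includes the initial-value term and $\hat S_{m,k}\to\Psi_m$ as $k\to-\infty$) to ensure it is dominated by the fourth term's $\ell^2$-sum. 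The hypothesis $r\ge 2$ is essential for the $\ell^r\to\ell^2$ conversion that matches the form of the fourth term on the right-hand side.
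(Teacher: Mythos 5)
Your proposal takes a genuinely different route from the paper: where the paper first inserts the constraint $|I_P|\ge |J^{m+1}(x)|$ (paying $E_2$), and then applies the standard long/short decomposition to the resulting cleanly-structured quantity, you instead decompose $F^{in}_k=\Sigma_k+R_k$ at the outset and then try to estimate $\Sigma$ and $R$ separately. The $\Sigma$ part of your argument is sound.

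However, there is a concrete gap in the treatment of $R$. The claimed inequality $\|R\|_{V^r_k}^r\lesssim\sum_m\|R\|_{\widetilde V^r[\sigma_{m+1},\sigma_m]}^r$ does not follow from ``refining any partition to include each $\sigma_m$''; in fact it is false for $r>1$. Refining a partition \emph{decreases} the sum of $r$-th powers of the jumps (a single jump of size $1$ split into $n$ pieces of size $1/n$ contributes $n^{1-r}\to0$), so one cannot pass to the refined partition and retain an upper bound. Concretely, if $R(\sigma_m)$ were increasing like $m^2$ and $R$ were constant inside each block, then $\sum_m\|R\|_{\widetilde V^r[\text{block}]}^r\approx N^{r+1}$ while $\|R\|_{\widetilde V^r}^r\gtrsim N^{2r}$. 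Thus the long-jump contribution of $R$ at the cut points, namely $\|R(\sigma_m)\|_{V^r_m}$, is not captured by the block-by-block $\widetilde V^r$ sums, and the correct statement is the full long/short split $\|R\|_{V^r}\lesssim\|R(\sigma_m)\|_{V^r_m}+\big(\sum_m\|R\|^2_{\widetilde V^r[\sigma_{m+1},\sigma_m]}\big)^{1/2}$. Your ``boundary-jump bookkeeping'' paragraph does identify that something delicate happens at $\sigma_m$, but the proposed fix --- absorbing the jump into a neighboring block and then bounding by $|\Psi_m|\le\|\hat S_{m,k}\|_{V^r_k}$ --- does not address this: the quantity that needs bounding is the cumulative variation of the sequence $(R(\sigma_m))_m$ across many blocks, which is not controlled by the individual $|\Psi_m|$. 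The good news is that your decomposition still works once one observes the pointwise bound $|R(\sigma_m)|\le\sum_{m'<m}\sum_{P\in\P_{<J^{m'}(x)},\,|I_P|<|J^m(x)|}|b_P\psi_{P,j}(x)|$, which is exactly the $m$-th summand of $E_2(x)$; this yields $\|R(\sigma_m)\|_{V^r_m}\le\|R(\sigma_m)\|_{V^1_m}\le2\sum_m|R(\sigma_m)|\le2E_2(x)$, and the long-jump term is absorbed into $E_2$. With that observation, the rest of your bound on the intra-block variation of $R$ (splitting into the $m'=m$ and $m'<m$ parts, bounding the former by $\|\hat S_{m,k}\|_{V^r_k}$ and the latter by the $m$-th piece of $E_2$) goes through, but as written the proposal is missing this essential step. (Two small notational slips: the inclusions should read $\ell^2\subset\ell^r$ and $\ell^1\subset\ell^2$, i.e.\ $\|\cdot\|_{\ell^r}\le\|\cdot\|_{\ell^2}\le\|\cdot\|_{\ell^1}$.)
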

Remark: A simpler  analogue of Lemma~\ref{l.E1(x)E2(x)} was considered in \cite[Lemma 12.2]{dtt2008}.  Our Lemma~\ref{l.E1(x)E2(x)} (and the following Lemma~\ref{l.E2(x)}) in fact fills  in a small gap in \cite[Lemma 12.2]{dtt2008}, where an error term similar to $E_2$ was not treated.

\begin{proof}
By the triangle inequality
\begin{eqnarray*}
\|\sum_{J \in \calJ} \ \  \sum_{\substack{P \in \P_{<J} \\ |I_P| \geq 2^k}} b_P \psi_{P,j}(x)\|_{V^r_k}  
&\le& \|\sum_{\substack{J \in \calJ\\ x\in J}} \ \  \sum_{\substack{P \in \P_{<J} \\ |I_P| \geq 2^k}}  b_P \psi_{P,j}(x)\|_{V^r_k} + E_1(x)
\end{eqnarray*}

Let $J_1 \subsetneq \ldots \subsetneq J_N$ be the (nested) intervals in $\calJ$ that contain $x$. Choose $k_1, \ldots, k_N$ so that $2^{k_l} = |J_l|$, which (together with $N$) are functions of $x$. Then, the first term on the right of the last display could be rewritten as
\begin{eqnarray*}
&=&  \|\sum_{1\le \ell \le N}\ \  \sum_{\substack{P \in \P_{<J_\ell} \\ |I_P| \geq 2^k}} b_P \psi_{P,j}(x)\|_{V^r_k} 
\quad \le \quad \|\sum_{\ell} \sum_{\substack{P \in \P_{<J_\ell} \\   |I_P| \ge 2^k, \ |I_P| \ge |J_{\ell+1}|}}  b_P \psi_{P,j}(x)\|_{V^r_k} + E_2(x)
\end{eqnarray*}
where, for the inequality above, we use the fact that $J_{\ell + 1} \subsetneq J_\ell$ and so $(J_{\ell+1})_A \subsetneq (J_{\ell})_A.$
Using a long jump/short jump decomposition of the variation-norm, the first term on the right side of the inequality above is $\le \mathcal A_1 + \mathcal A_2$, where
\begin{eqnarray*} 
\mathcal A_1&=& \|\sum_{\ell}\quad \sum_{\substack{P \in \P_{<J_\ell}: \  |I_P| \geq 2^{k_n}, \ |I_P| \geq 2^{k_{\ell+1}}}} b_P \psi_{P,j}(x)\|_{V^r_n} \\ 
\mathcal A_2  &=& 2\  (\sum_{n} \| \sum_{\ell} \sum_{\substack{P \in \P_{<J_\ell} \\ |I_P| \geq 2^k, \ 
|I_P| \geq 2^{k_{\ell+1}}}} b_P \psi_{P,j}(x) \|^2_{V^r_{k_{n+1} \leq k < k_{n}}})^{1/2}.
\end{eqnarray*}
It is clear that
\begin{eqnarray*}
\mathcal A_1 &=& \|\sum_{\ell < n} \sum_{\substack{P \in \P_{<J_\ell} \\ |I_P| \geq 2^{k_{\ell+1}}}} b_P \psi_{P,j}(x)\|_{V^r_n} \quad \le \quad \|\sum_{\ell < n} \sum_{\substack{P \in \P_{<J_\ell}}} b_P \psi_{P,j}(x)\|_{V^r_n} + E_2(x) \\
&=& \|\sum_{m =1}^{n} \sum_{\substack{J \in \calJ_m \\ x \in J}} \sum_{P \in \P_{<J}} b_P \psi_{P,j}(x)\|_{V^r_n} + E_2(x)\\
&\le& \|\sum_{m =1}^{n} \sum_{J \in \calJ_m} \sum_{P \in \P_{<J}} b_P \psi_{P,j}(x)\|_{V^r_n} + E_1(x) + E_2(x) \ \ , \\
\mathcal A_2 &=& 2 \Big(\sum_{n} \Big\| \sum_{\substack{P \in \P_{<J_n} \\ |I_P| \geq 2^k}} b_P \psi_{P,j}(x) + \sum_{\ell<n} \sum_{\substack{P \in \P_{<J_\ell} \\ |I_P| \geq 2^{k_{\ell+1}}}} b_P \psi_{P,j}(x) \Big\|^2_{V^r_{k_{n+1} \leq k < k_{n}}}  \Big)^{1/2} \\
&=&  2 \Big(\sum_{n} \| \sum_{\substack{P \in \P_{<J_n} \\ |I_P| \geq 2^k}} b_P \psi_{P,j}(x) \|^2_{V^r_{k_{n+1} \leq k < k_{n}}}  \Big)^{1/2}\\
&\le& 2\Big(\sum_{m, J \in \calJ_m}\| \sum_{\substack{P \in \P_{<J} \\ |I_P| \geq 2^k}} b_P \psi_{P,j}(x)\|_{V^r_k}^2 \Big)^{1/2} .
\end{eqnarray*}
\end{proof}

\begin{lemma} \label{l.E1(x)} It holds that
\begin{eqnarray*} 
\|E_1(x)\|_{L^2} &\lesssim_\eta& L A^{1 - \eta} \ .
\end{eqnarray*}
\end{lemma}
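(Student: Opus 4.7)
I would adapt the Cauchy--Schwarz strategy of Lemma~\ref{l.E(x)} to the family $\P':=\bigcup_{J\in\calJ}\P_{<J}$, noting that each $P\in\P'$ determines a unique $J_*(P)\in\calJ$ with $P\in\P_{<J_*(P)}$. The key new geometric input is a consequence of \eqref{e.intcond} combined with $(A,d)$-sparseness: for $P\in\P_{<J}$, sparseness yields $|I_P|\le 2^{-100A}|J|$, so the hypothesis $\sup_{y\in I_P}\dist(y,\partial J)\ge D_\eta A^{-\eta}|J|$ actually forces $\dist(I_P,\partial J)\ge D_\eta A^{-\eta}|J|/2$ (choosing $D_\eta$ large enough depending on $\eta$, the $|I_P|$-term is negligible next to $D_\eta A^{-\eta}|J|$). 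Hence for $x\notin J$, $\dist(x,I_P)\ge D_\eta A^{-\eta}|J|/2$, and splitting the wave-packet decay in half yields
\[ |\psi_{P,j}(x)|\,1_{x\notin J}\ \lesssim_{\eta,M}\ |I_P|^{-1/2}(A^\eta|I_P|/|J|)^{2M}\,\widetilde\chi_{I_P}(x)^{M/2}. \]

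The next step would be the pointwise Cauchy--Schwarz exactly as in the proof of Lemma~\ref{l.E(x)}, giving
\[ \|E_1\|_2^2 \ \le\ \Big\|\sum_{P\in\P'}|I_P|^{1/2}|\psi_{P,j}|\Big\|_\infty \cdot \int\sum_{P\in\P'}|b_P|^2|I_P|^{-1/2}|\psi_{P,j}(x)|\,1_{x\notin J_*(P)}\,dx. \]
For the $L^1$ factor, integrating the decay estimate above gives $|I_P|^{-1/2}\|1_{x\notin J_*(P)}\psi_{P,j}\|_1\lesssim_{\eta,M}(A^\eta|I_P|/|J_*(P)|)^{4M-1}$; using sparseness ($|I_P|/|J_*(P)|\le 2^{-100A}$) and $\sum_P|b_P|^2\le 1$, this is bounded by $(A^\eta 2^{-100A})^{4M-1}$. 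For any prescribed $Q$, taking $M$ large enough depending on $\eta$ and $Q$ makes this factor $\lesssim_{\eta,M,Q} A^{-Q}$.

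For the $L^\infty$ factor, I would use the dominance $|I_P|^{1/2}|\psi_{P,j}|\lesssim_M\widetilde\chi_{I_P}^M\lesssim(\ma 1_{I_P})^{4M}$ together with \eqref{e.masq} and the $(A,d)$-sparseness of $\{I_P\}\cup\{I_T\}$. Grouping by spatial scale, sparseness ensures the $I_P$ at each scale are at least $100A|I_P|$-spaced, so at each scale the sum of $\widetilde\chi_{I_P}^M$ is $O(1)$ by a Riemann-sum argument; at each fixed point only $L$ trees can contain it, so with at most $O(1)$ tri-tiles per scale per tree (lacunarity), one obtains a polynomial bound in $L$, $A$ and the number of scales realized in $\P$. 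Combining with the super-polynomial $A^{-Q}$ decay of the $L^1$ factor gives $\|E_1\|_2^2\lesssim_{\eta,M} L^{O(1)}A^{-Q}$, which for $Q$ chosen sufficiently large depending on $\eta$ is $\lesssim L^2 A^{2(1-\eta)}$, yielding $\|E_1\|_2\lesssim_\eta LA^{1-\eta}$.

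The main obstacle is the bookkeeping in the $L^\infty$ bound: the number of spatial scales appearing in $\{I_P:P\in\P\}$ is not a priori bounded uniformly in $\P$. However this is harmless because the sparseness scale separation ($|I_P|$-values differing by factors of $2^{100A}$) together with the super-polynomial $A$-decay of the $L^1$ factor (obtained by taking $M$ large depending on $\eta$) absorbs any polynomial scale-count growth. The analogous technical issue is handled more easily in Lemma~\ref{l.E(x)} because there all tri-tiles share $I_P=J$, whereas here $\P_{<J}$ can contain tri-tiles at many different sub-scales of $J$.
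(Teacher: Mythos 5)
Your overall strategy (pointwise Cauchy--Schwarz over all tri-tiles, then bound an $L^\infty$ factor and an $L^1$ factor separately) differs from the paper's and, as written, has a genuine gap in the $L^\infty$ bound that you yourself flag but do not actually resolve. You correctly observe that
\begin{eqnarray*}
\Big\|\sum_{P\in\P'}|I_P|^{1/2}|\psi_{P,j}|\Big\|_\infty
\end{eqnarray*}
is only controlled up to a factor proportional to the number of distinct spatial scales in $\{I_P : P \in \P\}$ --- sparseness gives an $O(1)$ bound per scale, and \eqref{e.masq} bounds the number of trees, but neither bounds the scale count. You then assert that the super-polynomial $A^{-Q}$ decay of the $L^1$ factor ``absorbs any polynomial scale-count growth.'' This does not work: that $A^{-Q}$ is a \emph{fixed} constant once $A$, $M$, $\eta$ are chosen, and it carries no dependence on the scale count. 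The product $K \cdot A^{-Q}$, where $K$ is the number of scales, still tends to infinity as $K \to \infty$ for fixed $A$. Your $L^1$ factor bound $\sum_P |b_P|^2 (A^\eta |I_P|/|J_*(P)|)^{O(M)} \le (A^\eta 2^{-100A})^{O(M)}$ has thrown away exactly the geometric decay in $|I_P|/|J_*(P)|$ that would be needed to beat the scale count, by using only the worst-case estimate $|I_P|/|J_*(P)| \le 2^{-100A}$ together with $\sum |b_P|^2 \le 1$.

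The paper sidesteps this by first applying Cauchy--Schwarz over the layers $m = 1,\ldots,16A^2L$ (not over all tri-tiles), then, for a fixed layer, using Fefferman--Stein together with the pairwise disjointness of the intervals in $\calJ_m$ to reduce to a single $J$, and finally Cauchy--Schwarz over the at most $L$ trees contributing to $\P_{<J}$. Only after all these reductions does the sum over scales appear, and at that point it is a sum over $P \in T \cap \P_{<J}$ inside a \emph{single} tree, where the pointwise wave-packet estimate carries a factor $(|J|/|I_P|)^{-2}$ (or better) that makes the sum over scales a convergent geometric series regardless of how many scales occur. That geometric link between the scale $|I_P|$ and the fixed reference length $|J|$ is exactly what a global tile-by-tile Cauchy--Schwarz destroys. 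To salvage your argument you would have to retain some scale-coupling in the split; as stated the $L^\infty$ bound is $\lesssim L \cdot (\#\text{scales})$ and the conclusion does not follow.
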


\begin{proof}
Using Cauchy-Schwartz  it suffices to show that for each $m$
\begin{eqnarray*}
\|\sum_{J \in \calJ_m: \ x \not\in J} \ \ \sum_{P \in \P_{<J}} |b_P \psi_{P,j}(x)|\|_{L^2} &\lesssim_{\eta}& L^{1/2} A^{-\eta}  \|b_P\|_{\ell^2(\bigcup_{J \in \calJ_m}\P_{<J})} \ \ .
\end{eqnarray*}
Using the Fefferman-Stein maximal inequality and the fact that the intervals in $\calJ_m$ are disjoint, it suffices to prove that if $J\in \calJ_m$ and  $x \not\in J$ then
\begin{eqnarray*}
\sum_{P \in \P_{<J}} |b_P \psi_{P,j}(x)| &\lesssim_{\eta}&  L^{1/2} A^{-\eta} \|b_P\|_{\ell^2(\P_{<J})} |J|^{-1/2} (\ma[1_J](x))^2.
\end{eqnarray*}
Now, if $T$ intersects $P_{<J}$ then $J\subset I_T$, therefore using \eqref{e.masq} we see that at most $L$ trees in $\T$ contribute a given $\P_{<J}$. Thus, using Cauchy Schwarz it suffices to show that, for each $T \in \T$,
\begin{eqnarray} \label{lrsetree}
\sum_{P \in T\cap \P_{<J}} |\psi_{P,j}(x)|^2 &\lesssim_{\eta}& A^{-2\eta} |J|^{-1} (\ma[1_J](x))^4.
\end{eqnarray}
Choosing $D_\eta$ large enough, the $(A,d)$ sparsity and \eqref{e.intcond} imply that for each $P$ in the sum above
\begin{eqnarray*}
\sup_{y \in I_P} \dis(y,\partial J) &\ge& D_\eta A^{-\eta} 2^{50A} |J|^{1/2}|I_P|^{1/2} \\
&\ge& 2^{49A}|J|^{1/2}|I_P|^{1/2} \ \ .
\end{eqnarray*}
Recall that $M$ is the order of the wave packet $\psi_{P,j}$. Thus, for $x \not\in J$ , choosing $M$ large enough we obtain
\begin{eqnarray*}
|\psi_{P,j}(x)|^2 &\lesssim_M& 2^{-49A(M-4)} (\frac{|J|}{|I_P|})^{-(M-4)} |I_P|^{-1} (\ma[1_{I_P}](x))^4 \\
&\lesssim_{\eta}& A^{-2\eta} (\frac{|J|}{|I_P|})^{-2} |J|^{-1} (\ma[1_{J}](x))^4.
\end{eqnarray*}
Summing over $P \in \P_{<J} \cap T$ we obtain \eqref{lrsetree}.
\end{proof}

\begin{lemma} \label{l.E2(x)} It holds that
\begin{eqnarray*} 
\|E_2(x)\|_{L^2_x} &\lesssim_\eta&  L^{2} A^{3 - \eta} 
\end{eqnarray*}
\end{lemma}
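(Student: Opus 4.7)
The plan is to adapt the argument for Lemma~\ref{l.E1(x)} to the additional layer-ancestor sum appearing in $E_2$. The first step is geometric: for each term in $E_2(x)$, writing $J \in \calJ_m$ for the (unique) layer-$m$ interval containing $x$ and $J' := J^{m'} \in \calJ_{m'}$ for its layer-$m'$ ancestor, I argue that $I_P \cap J = \emptyset$. Indeed, $P \in \P_{<J'}$ combined with $J \in \calJ_m$ lying in a strictly later layer than $\calJ_{m'}$ forces $I_P \not\subset J$ by definition of $\P_{<J'}$, and dyadicity together with $|I_P| < |J|$ then gives disjointness. Applying condition~\eqref{e.intcond} with $I_T = J$ and using the sparsity gap $|I_P| \le 2^{-100A}|J|$ to absorb the endpoint correction yields $\dist(I_P, J) \gtrsim D_\eta A^{-\eta}|J|$ once $D_\eta$ is chosen sufficiently large.

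The second step is a per-layer-pair estimate mimicking Lemma~\ref{l.E1(x)}. For each pair $(m, m')$ with $m' < m$, set
\[
E_{m,m'}(x) := \sum_{\substack{J \in \calJ_m \\ x \in J}}\sum_{\substack{P \in \P_{<J^{m'}} \\ |I_P| < |J|}}|b_P\,\psi_{P,j}(x)|.
\]
Using the wave packet decay of order $M$ (taken large depending on $\eta$) together with the separation $\dist(I_P, J) \gtrsim D_\eta A^{-\eta}|J|$ and the sparsity gap (which provides a $(|I_P|/|J|)^M \le 2^{-100AM}$ tail, ample to absorb the resulting factor $A^{M\eta}$), along with the hypothesis $\|\sum_T (\ma 1_{I_T})^2\|_\infty \le L$ (which bounds both the number of trees through any given point and the number of trees $T$ with $I_T \supset J'$ by $L$), I would derive the pointwise bound
\[
\sum_{\substack{P \in \P_{<J^{m'}}\\ |I_P|<|J|}}|b_P \psi_{P,j}(x)|\cdot 1_J(x) \lesssim_\eta L^{1/2} A^{-\eta}\,\|b\|_{\ell^2(\P_{<J^{m'}})}\,|J|^{-1/2}(\ma 1_J(x))^2.
\]
Summing over disjoint $J \in \calJ_m$ and applying the Fefferman--Stein inequality exactly as in Lemma~\ref{l.E1(x)} then gives $\|E_{m,m'}\|_{L^2} \lesssim_\eta L^{1/2}A^{-\eta}\|b\|_{\ell^2(\bigcup_{J'\in\calJ_{m'}}\P_{<J'})}$.

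The final step is aggregation over layer pairs. For each fixed $x$, at most $L$ values of $m$ have $x$ in some $J \in \calJ_m$ (by~\eqref{e.masq}), and for each such $m$ the parameter $m'$ ranges over at most $16A^2L$ values, so at most $16A^2L^2$ pairs $(m, m')$ contribute to $E_2(x)$. Cauchy--Schwarz in $(m, m')$ combined with the fact that each tile $P$ lies in a unique $\P_{<J'}$ (so that each tile is charged to at most $16A^2L$ pairs) yields
\[
\|E_2\|_{L^2}^2 \lesssim A^2L^2 \cdot \sum_{(m,m')}\|E_{m,m'}\|_{L^2}^2 \lesssim_\eta A^2L^2 \cdot L A^{-2\eta} \cdot A^2 L \cdot \|b\|_{\ell^2}^2 \le L^4 A^{4-2\eta},
\]
and taking square roots gives $\|E_2\|_{L^2} \lesssim_\eta L^2 A^{2-\eta}$, which is even sharper than the claimed bound. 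The main obstacle is the per-pair pointwise estimate of the second step: the tile-by-tile summation must handle the constraint $|I_P| < |J|$ and a case analysis of whether $J \subset J'$ or only $J \subset 3AJ' \setminus J'$, while ensuring the rapid wave packet tail combined with sparsity dominates the polynomial $A^{M\eta}$ factor inherited from the separation $D_\eta A^{-\eta}|J|$.
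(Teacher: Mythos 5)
Your Step 1 (the geometric separation $I_P\cap J=\emptyset$ and $\dist(I_P,\partial J)\gtrsim D_\eta A^{-\eta}|J|$) is exactly the paper's observation, and your aggregation idea in Step 3 is sound and in fact slightly sharper than what the paper obtains (you get $L^2 A^{2-\eta}$ versus the paper's $L^2 A^{3-\eta}$, by using that at each fixed $x$ at most $L$ layers $m$ can have $x\in J$ for some $J\in\calJ_m$). However, Step 2 has a genuine gap. You propose the pointwise bound
\begin{equation*}
\sum_{\substack{P\in\P_{<J^{m'}}\\ |I_P|<|J|}}|b_P\psi_{P,j}(x)|\,1_J(x)\ \lesssim_\eta\ L^{1/2}A^{-\eta}\,\|b\|_{\ell^2(\P_{<J^{m'}})}\,|J|^{-1/2}(\ma 1_J(x))^2,
\end{equation*}
and then say that summing over disjoint $J\in\calJ_m$ ``exactly as in Lemma~\ref{l.E1(x)}'' yields $\|E_{m,m'}\|_{L^2}\lesssim L^{1/2}A^{-\eta}\|b\|_{\ell^2(\bigcup_{J'\in\calJ_{m'}}\P_{<J'})}$. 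That step does \emph{not} carry over: squaring your pointwise estimate and integrating over $x\in J$, then summing over $J\in\calJ_m$, produces $L A^{-2\eta}\sum_{J\in\calJ_m}\|b\|^2_{\ell^2(\P_{<J^{m'}})}$. In Lemma~\ref{l.E1(x)} the analogous sum is $\sum_{J\in\calJ_m}\|b\|^2_{\ell^2(\P_{<J})}$, and the sets $\P_{<J}$ for distinct $J\in\calJ_m$ are pairwise disjoint, so that sum telescopes to $\|b\|^2_{\ell^2(\bigcup\P_{<J})}$. Here the situation is different: the map $J\mapsto J^{m'}$ from $\calJ_m$ to $\calJ_{m'}$ is many-to-one (a single $J'\in\calJ_{m'}$ may have an arbitrarily large number of descendants $J\in\calJ_m$ with $J^{m'}=J'$, since all one knows is that they are disjoint subintervals of $3AJ'$), so each $\|b\|^2_{\ell^2(\P_{<J'})}$ is charged with unbounded multiplicity and the sum cannot be controlled by $\|b\|^2_{\ell^2(\bigcup_{J'\in\calJ_{m'}}\P_{<J'})}$.

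The paper avoids this overcounting by \emph{not} using a pointwise bound involving $\|b\|_{\ell^2(\P_{<J^{m'}})}$. Instead it splits the per-pair $L^2$ bound via Cauchy--Schwarz into an $L^\infty$ estimate
\begin{equation*}
\Big\|\sum_{\substack{J\in\calJ_m\\x\in J}}\sum_{\substack{P\in\P_{<J^{m'}}\\|I_P|<|J|}}|I_P|^{1/2}|\psi_{P,j}(x)|\Big\|_{L^\infty}\ \lesssim_\eta\ L A^{-2\eta},
\end{equation*}
which carries the smallness and is independent of $b$, together with an $L^1$ estimate
\begin{equation*}
\Big\|\sum_{\substack{J\in\calJ_m\\x\in J}}\sum_{\substack{P\in\P_{<J^{m'}}\\|I_P|<|J|}}|I_P|^{-1/2}|b_P^2\psi_{P,j}(x)|\Big\|_{L^1}\ \lesssim\ \|b\|^2_{\ell^2(\bigcup_{J'\in\calJ_{m'}}\P_{<J'})},
\end{equation*}
where the disjointness of $\calJ_m$ lets one drop the $J$-constraint at each $x$ so that each tile $P$ contributes exactly once (no multiplicity). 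Combining these by Cauchy--Schwarz gives the correct per-pair bound. If you replace your Step 2 with this $L^1/L^\infty$ splitting (your tile-decay computation already contains the content of the $L^\infty$ bound), your aggregation in Step 3 would indeed give the improved exponent $L^2A^{2-\eta}$.
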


\begin{proof}
By Cauchy-Schwarz, it suffices to show that for $1 \leq m' < m$ we have 
\begin{eqnarray*}
\|\sum_{\substack{J \in \calJ_m \\ x \in J}} \sum_{\substack{P \in \P_{<J^{m'}} \\ |I_P| < |J|}} |b_P \psi_{P,j}(x)|\|_{L^2_x} &\lesssim_{\eta}& L^{1/2} A^{-\eta}  \|b_P\|_{\ell^2(\bigcup_{J \in \calJ_{m'}}\P_{<J})}
\end{eqnarray*}
The above bound will follow, by Cauchy-Schwarz, from the following two estimates
\begin{eqnarray} \label{lrsseeq1}
\|\sum_{\substack{J \in \calJ_m \\ x \in J}} \sum_{\substack{P \in \P_{<J^{m'}} \\ |I_P| < |J|}} |I_P|^{1/2}|\psi_{P,j}(x)|\|_{L^\infty} &\lesssim_\eta&   L A^{-2\eta} \\
\label{lrsseeq2}
\|\sum_{\substack{J \in \calJ_m \\ x \in J}} \sum_{\substack{P \in \P_{<J^{m'}} \\ |I_P| < |J|}} |I_P|^{-1/2} |b_P^2 \psi_{P,j}(x)|\|_{L^1} &\lesssim&  \|b_P\|^2_{\ell^2(\bigcup_{J \in \calJ_{m'}}\P_{<J})}.
\end{eqnarray}

To see \eqref{lrsseeq1} fix $x$ and choose the unique $J \in \calJ_m$ with $x \in J.$ As in Lemma \ref{l.E1(x)}, it suffices to show that for each $T \in \T$
\begin{eqnarray} \label{lrsseeq3}
\sum_{\substack{P \in \P_{<J^{m'}} \cap T: \ |I_P| < |J|}} |I_P|^{1/2}|\psi_{P,j}(x)| &\lesssim_\eta& A^{-2\eta}.  
\end{eqnarray}
Choosing $D_\eta$ large, it follows (as in Lemma \ref{l.E1(x)}) that the following holds for every $P$ in the sums above:
\begin{eqnarray*}
\inf_{y \in I_P} \dis(y,\partial J) &\ge& 2^{49A} |J|^{1/2} |I_P|^{1/2} \ \ .
\end{eqnarray*}
Since $|I_P| < |J|$ and  $P\in \P_{<J^{m'}}$, it follows that $I_P \cap J = \emptyset$. Since $x \in J$, using $(A,d)$ sparseness and \eqref{e.intcond} we obtain
\begin{eqnarray*}
\dis(x,I_P) &\ge& 2^{49A} |J|^{1/2} |I_P|^{1/2}  \ \ .
\end{eqnarray*} 
Due to the restriction of the sum to tiles in a single tree, each dyadic interval is the time interval of at most $O(1)$ tri-tiles, and so for each $k > 0$
\begin{eqnarray*}
\sum_{\substack{P \in \P_{<J^{m'}} \cap T:  \ \ |I_P| = 2^{-k}|J|}} |I_P|^{1/2}|\psi_{P,j}(x)| &\lesssim_M& 2^{-49A(M-1)} 2^{-k(M-1)/2} \  \  ,
\end{eqnarray*} 
and summing over $k$ gives \eqref{lrsseeq3}.

To see \eqref{lrsseeq2} simply use the fact that the intervals in $\calJ_m$ are pairwise disjoint to estimate the left side by
\begin{eqnarray*}
&\le& \|\sum_{P \in \bigcup_{J \in \calJ_{m'}}\P_{<J}} |I_P|^{-1/2} |b_P^2 \psi_{P,j}(x)|\|_{L^1} \quad \lesssim \quad \|b\|_{\ell^2(\bigcup_{J \in \calJ_{m'}} \P_{<J})}^2
\end{eqnarray*}
\end{proof}

Applying Lemma \ref{vnrmlemma} and Lemma~\ref{nonmaxbessellemma} as in the proof of \eqref{e.red41} we have
\begin{eqnarray*}
\|\sum_{m=1}^{n} \sum_{J \in \calJ_m} \sum_{P \in \P_{<J}} b_P \psi_{P,j}(x) \|_{L^2_x(V^r_n)} &\lesssim&   1 + \log^2(AL) 
\end{eqnarray*}
Thus, using Lemma~\ref{l.E1(x)E2(x)}, Lemma~\ref{l.E1(x)}, and Lemma~\ref{l.E2(x)}, to finish the proof of \eqref{e.red42} it suffices to establish  the following inequality (for each $m$ and $J \in \bigcup_{m}\calJ_L$):
\begin{eqnarray} \label{estwithbg}
\|\sum_{P \in \P_{<J} : \ |I_P| \geq 2^k} b_P \psi_{P,j}(x)\|_{L^2_x(V^r_k)} &\lesssim_{\epsilon}&  L^\epsilon \|b_P\|_{\ell^2(\P_{<J})}.
\end{eqnarray}
Let $\T_J$ be the collection of trees in $\T$ which contribute to $\P_{<J}.$ As above, we have $|\T_J| \leq L.$ 
For each $T \in \T_J$ let $\xi_T = c(\omega_{(P_{T})_j})$. Then, for each $P \in T \in \T_J$ we have 
\begin{eqnarray*}
\omega_{P_j} &\subset& (\xi_T - 10|e|\cdot|\omega_{P_j}|, \xi_T + 10|e|\cdot|\omega_{P_j}|).
\end{eqnarray*}
Furthermore, from condition \eqref{thirdstrongdisjoint} in the definition of strong j-disjointness and the fact that $J \subset I_T$ for each $T \in \T_J$, we have that 
\begin{eqnarray*}
\dis(\xi_T, \omega_{P_j}) &\ge& |\omega_{P_j}|/4
\end{eqnarray*}
for each $P \in \P_{<J}$ and each $T \in \T_J.$
Therefore, if we let 
\begin{eqnarray*}
R_k &=& \bigcup_{T \in \T_j} (\xi_T - 10|e|2^{-k}, \xi_T + 10|e|2^{-k})
\end{eqnarray*}
and $\Pi_k$ be the Fourier projection operator $\Pi_k[f] = \mathcal F^{-1}[1_{R_k}\widehat{f}]$
then, for each $k = -i \mod 4000$ we have
\begin{eqnarray} \label{projectionworks}
\sum_{\substack{P \in \P_{<J} \\ |I_P| \geq 2^k}} b_P \psi_{P,j} &=& \Pi_k[\sum_{P \in \P_{<J}} b_P \psi_{P,j}].
\end{eqnarray}
Thus, by Theorem~\ref{t.bourgain-varnorm} and Lemma \ref{nonmaxbessellemma} we have \eqref{estwithbg}.

\section{Concluding the proof of Theorem \ref{t.model-varnorm}} \label{s.long-concluding}

Let $\P$ be a finite subset of $\P_\nu$. Our aim is to prove that the trilinear form
\begin{eqnarray} \label{linearizedundualized}
\Lambda_{\P}(f_1,f_2,f_3) &=& \<\sum_{P \in \P} |I_P|^{-1/2}\<f_1,\phi_{P,1}\>\<f_2,\phi_{P,2}\>\phi_{P,3},f_3\>
\end{eqnarray}
satisfies restricted weak-type estimates with exponents $\alpha$ arbitrarily close to any vertex of $A$ define by \eqref{e.A-def}, with implicit constants  uniform over $\P$. We'll consider neighborhoods of $A_1(-\frac 1 2, \frac 1 2, 1)$, the other vertices could be treated similarly. 

By (dyadic) dilation symmetry we can assume $|F_1| \in [1/2, 1)$. Fix $s>1$ close to $1$ to be chosen later, and choose 
\begin{eqnarray*} B &=& \bigcup_{j=1}^3 \{\ma^s[1_{F_j}] \geq C |F_j|^{1/s}\} \ \ ,
\end{eqnarray*}
with $C$ sufficiently large so that $|B| \leq \frac{1}{4}$. Let $|f_1|\le 1_{F_1-B}$ and $|f_2|\le 1_{F_2}$ and $|f_3| \le 1_{F_3}$. Decompose $\P = \bigcup_{k \geq 0} \P_{k}$ where 
\begin{eqnarray*}
\P_{k} = \{P \in \P: 2^k \leq 1 + \dis(I_P,  B^c)/|I_P| < 2^{k+1} \}.
\end{eqnarray*}
For $P  \in \P_{k}$ we have
\begin{eqnarray*}
\sup_{I_P \subset I} \left( \frac{1}{|I|} \int |f_j(x)|^s \widetilde \chi_I(x)^2 \ dx\right)^{1/s} 
&\lesssim& 2^{k/s} \sup_{I_P \subset I} \left( \frac{1}{|2^kI|} \int |f_j(x)|^s \widetilde \chi_{2^kI}(x)^2\ dx\right)^{1/s} \\ 
&\lesssim&  2^{k/s} \inf_{x\in 2^k I_P}\ma^s[1_{F_j}](x) \\
&\lesssim& 2^{k/s} |F_j|^{1/s}
\end{eqnarray*}
Therefore, by Proposition \ref{p.sizebound}, for $j=2,3$ we have
\begin{eqnarray} \label{sizeboundeq}
S_j  \quad :=\quad \size_j(\P_{k},f_j) &\lesssim& 2^{k/s} |F_j|^{1/s}.
\end{eqnarray}
Here (and below) the implicit constants may depend on $r$, $s$, and $\beta_i$ (defined below). Now, when $j=1$ we will obtain the improved estimate 
\begin{eqnarray} \label{sizeboundeq2}
S_1 &:=& \size_1(\P_{k},1_{B^c}f_1) \leq C 2^{-(M-2)k} 
\end{eqnarray}
by exploiting the fact that the interval $I$ in the last sup has to be contained inside another $I_{P'}$ for some $P'\in \P_k$.

Now, applying Proposition \ref{l.size-increment} repeatedly, we obtain a decomposition of $\P_{k}$ into  collections of trees $(\T_n)_{n\in \mathbb Z}$ with
\begin{eqnarray} \label{treeboundinproof}
\sum_{T \in \T_n} |I_T| &\lesssim& 2^n,
\end{eqnarray}
such that  for any $T\in \T_n$ we have
\begin{eqnarray}\label{sizeboundinproof}
\size_{i}(T,f_i) &\lesssim& 2^{-n/(2s)}|F_i|^{1/(2s)}.
\end{eqnarray}

Now, for any tree $T$ we have
\begin{eqnarray} \label{treeestimate}
\sum_{P \in T} |I_P|^{-1/2} \prod_{i = 1}^3|\<f_i,\phi_{P,i}\>| 
&\le& 3 |I_T|\prod_{i = 1}^3 \size_i(T,f_i).
\end{eqnarray}
To see \eqref{treeestimate}, by further decomposing $T$ if needed we may assume that T is $i$-overlapping for some $i \in \{1, 2,3\}$.  Then estimating
\begin{eqnarray*}
|I_P|^{-1/2}|\<f_i,\phi_{P,i}\>| 
&\le& \size_i(T,f_i)
\end{eqnarray*}
and applying Cauchy-Schwarz to estimate the remaining bilinear sum by
\begin{eqnarray*}
&\lesssim& |I_T| \prod_{j \in \{1,2,3\} \setminus \{i\}}\size_j(T, f_j)
\end{eqnarray*}
one obtains \eqref{treeestimate}.

Applying \eqref{treeboundinproof}, \eqref{sizeboundinproof}, \eqref{treeestimate},  we obtain
\begin{eqnarray*} 
|\Lambda_k(f_1, f_2, f_3)|   &\lesssim& \sum_{n} 2^n \prod_{i=1}^3 \min(S_i, 2^{-n/(2s)}|F_i|^{1/(2s)}) \ \ , \\
 |\Lambda_k(f_1, f_2, f_3)| &:=& \sum_{P \in \P_{k}}|I_P|^{-1/2} \prod_{i = 1}^3|\<f_i,\phi_{P,i}\>|.
\end{eqnarray*}
For any $\beta_1,\beta_2,\beta_3\in [0,1]$, we obtain
\begin{eqnarray*}
|\Lambda_k(f_1, f_2, f_3)|   &\lesssim& S_1 S_2 S_3 \sum_{n} 2^n \min \Big(1, 2^{-n\frac{\beta_1+\beta_2 + \beta_3}{2s}} \prod_{i=1}^3  |F_i|^{\frac{\beta_i}{2s}} S_i^{-\beta_i} \Big) \ \ .
\end{eqnarray*}
The above estimate is a two sided geometric series if we choose $\beta_i$'s such that $\beta_1+\beta_2+\beta_3 > 2s$ (which is possible for $s$ close to $1$). Letting $\gamma_i := 2s\beta_i/(\beta_1+\beta_2+\beta_3)$ we obtain
\begin{eqnarray*}
|\Lambda_k(f_1, f_2, f_3)|   &\lesssim& \prod_{i=1}^3 S_i^{1-\gamma_i} |F_i|^{\gamma_i/(2s)}   \\
&\lesssim&  2^{\frac{k}{s}(2 - \gamma_2 - \gamma_3 - s(M-2)(1 - \gamma_1))}\Big(\prod_{i=1}^3 |F_i|^{1 -\frac{\gamma_i}{2}}\Big)^{1/s} \qquad \text{(using \eqref{sizeboundeq},  \eqref{sizeboundeq2})}.
\end{eqnarray*}

Again assuming that $\beta_1 + \beta_2 + \beta_3 > 2s$ we are guaranteed $\gamma_1 < 1$ and so, choosing $M$ large enough depending on $\beta$ we may sum in $k$ to conclude 
\begin{eqnarray*}
|\Lambda(f_1, f_2, f_3)| &\lesssim& \Big(\prod_{i=1}^3 |F_i|^{1 -\frac{\gamma_i}{2}}\Big)^{1/s}
\end{eqnarray*}
Since $|F_1| \sim 1$, we can ignore its contribution in the above estimate. Now, by sending $(s,\beta_1,\beta_2,\beta_3)$ to $(1,1,1,0)$ inside the region $\{\beta_1+\beta_2+\beta_3>2s\} \cap \{0\le \beta_1,\beta_2,\beta_3\le 1<s\}$, we obtain the desired claim.

\section{Proof of Theorem~\ref{t.model-square}}  \label{s.short-concluding}

The proof of Theorem~\ref{t.model-square} is entirely similar to the proof of Theorem~\ref{t.model-varnorm}, essentially the main difference is that variation-norm estimates such as the continuous L\'epingle inequality (see Lemma~\ref{l.lepingle}) is replaced by the classical Littlewood--Paley square function estimate. We briefly discuss the cosmetic changes, the details are left to the reader. We may define $\phi_{P,j}=\psi_{P,j}$ for $j=1,2$, and $\phi_{P,3}=\psi_{P,3} d_n$ if $|I_P|=2^n$ and $0$ otherwise. 

Now, the sizes are defined exactly as before, and to get the size estimates for $\size_3(\P,f)$ (as in Proposition~\ref{p.sizebound}) we use the same proof, the only difference is near the end we appeal to the classical $L^p$ estimates for the Littlewood--Paley square functions associated with scales of the underlying tree, instead of the continuous L\'epingle inequality. 

Now, to get the size increment estimate (as in Proposition~\ref{l.size-increment}) we use the same reduction to a Bessel inequality as in Theorem~\ref{t.vnbi}. To prove this Bessel estimate for the new $\phi_{P,3}$, we follow the same sequence of reductions and the proof reduces to proving Proposition~\ref{p.lastred} with the new modified wave packets. We perform the same partition of $\P$ as in \eqref{e.Ppartition}, and it suffices to show the following two analogues of \eqref{e.red41} and \eqref{e.red42}. Below we let $S_k$ denote the $\ell_2$ sum of a sequence indexed by $k$ and $(b_P)$ is a sequence on $\P$ with normalized $\ell^2(\P)$ norm.
\begin{eqnarray}
\label{e.red41-square}
\|\sum_{J \in \calJ} \sum_{P \in \P_J: \  |I_P| = 2^k} b_P \psi_{P,3}(x)\|_{L^2_x(S_k)}  &\lesssim_\eta&  1 + \log^2(AL) + A^{-\eta}L   \\
\label{e.red42-square}
\|\sum_{J \in \calJ} \sum_{P \in \P_{<J}: \  |I_P| = 2^k}  b_P \psi_{P,3}(x)\|_{L^2_x(S_k)}  &\lesssim_{\eta,\epsilon}&  (AL)^{\epsilon} + L^2A^{3 - \eta}   \ \ .
\end{eqnarray}
The proofs of these two estimates are similar. We'll use the same error terms $E(x)$, $E_1(x)$, and $E_2(x)$, and using analogues of Lemma~\ref{l.E(x)-source} and Lemma~\ref{l.E1(x)E2(x)} the proofs of \eqref{e.red41-square} and \eqref{e.red42-square} reduce to proving
\begin{eqnarray}
\label{e.sq-wavelet1}\|(\sum_{m=1} (\sum_{J\in \calJ_m} \sum_{P\in \P_J} b_P \psi_{P,3})^2)^{1/2}\|_2 &\lesssim& [1+\log(16A^2L)][1+ \log(L)] \\
\label{e.sq-wavelet2}\|(\sum_{m=1} (\sum_{J\in \calJ_m} \sum_{P\in \P_{<J}} b_P \psi_{P,3})^2)^{1/2}\|_2 &\lesssim&  [1+\log(16A^2L)][1+ \log(L)] \\
\label{e.sq-wavelet3}\|(\sum_{k} (\sum_{P \in \P_{<J}: \ |I_P|=2^k} b_P \psi_{P,3})^2)^{1/2}\|_2 &\lesssim_\epsilon& L^\epsilon \|b_P\|_{\ell^2(\P_{<J})}
\end{eqnarray}
We note that the square norm is bounded above by the $2$-variation norm. Thus, using Lemma~\ref{vnrmlemma} the estimates \eqref{e.sq-wavelet1} and \eqref{e.sq-wavelet2} follow from Lemma~\ref{nonmaxbessellemma}.  Similarly, using Proposition~\ref{bourgainrmprop} and the Fourier projection representation \eqref{projectionworks}, the estimate \eqref{e.sq-wavelet3} follows from Lemma~\ref{nonmaxbessellemma}.

\subsection*{Acknowledgement} This work was initiated while the authors were visiting the University of California, Los Angeles in Winter 2012, and the visit was supported in part by the AMS Math Research Communities program. The authors would like to thank the MRC and Christoph Thiele for their generous support, hospitality, and useful conversations.

\end{document}